\newtheorem{theorem}{Theorem}[section]
\newtheorem{lemma}[theorem]{Lemma}
\newtheorem{proposition}[theorem]{Proposition}
\newtheorem{corollary}[theorem]{Corollary}
\newtheorem{propdef}[theorem]{Proposition-Definition}
\theoremstyle{definition}
\newtheorem{definition}[theorem]{Definition}
\theoremstyle{remark}
\newtheorem{remark}[theorem]{Remark}
\numberwithin{equation}{section}
\begin{document}

\title{A unified approach to the theory of connections in Finsler Geometry}

\author[H. Vit\'orio]{Henrique Vit\'orio}
\address{Departamento de Matem\'atica, \hfill\break\indent
Universidade Federal de Pernambuco, \hfill\break\indent
Cidade Universit\'aria, \hfill\break\indent
Recife, Pernambuco, Brazil}
\email{henriquevitorio@dmat.ufpe.br}




\thanks{This work was supported by {\rm CNPq}, grant {\rm No.} 232664/2014-5}




\begin{abstract}
We propose a unified approach to the theory of connections in the geometry of sprays and Finsler metrics which, in particular,
gives a simple explanation of the well-known fact that all the classical Finslerian connections provide exactly the same formulas appearing in
the calculus of variations.

\smallskip
\noindent \textbf{Keywords.} Finsler metrics, sprays, linear connections, family of affine connections, second variation of energy functional.

\smallskip
\noindent \textbf{Mathematics Subject Classification.} Primary  53C60, 53C22. 
\end{abstract}

\maketitle

\section{Introduction}
Let us start by recalling the following
\begin{definition}
A Finsler metric on a manifold $M$ is a function $F:TM\rightarrow[0,\infty[$, smooth on the slit tangent bundle $TM\backslash 0$, such that
\begin{enumerate}
\item $F(u)=0$ if, and only if, $u=0$;
\item $F(\lambda u)=\lambda F(u)$ whenever $\lambda$ is a positive real number;
\item At each $w\in TM\backslash 0$, the fiber-Hessian of $F^2$ is positive-definite; that is, the {\it fundamental tensor} $g_w:T_xM\times T_xM\rightarrow\mathbb{R}$ ($w\in T_xM$)
of $F$, defined by
\begin{equation}\nonumber
g_w(u,v)=\frac{1}{2}\frac{\partial^2}{\partial t\partial s}\Big|_{(0,0)}F(w+su+tv)^2,
\end{equation}
is a positive-definite inner product on $T_xM$.
\end{enumerate}
\end{definition}
The geodesics of a Finsler metric can be defined as the critical points of the energy functionals
\begin{equation}\label{energyfunctional2}
{\rm E}~:~\Omega_{p_1,p_2}\longrightarrow\mathbb{R}~,~~~{\rm E}(\lambda)=\frac{1}{2}\int_0^1F(\dot{\lambda}(t))^2dt,
\end{equation}
where $\Omega_{p_1,p_2}$ is the space of all regular curves (or piecewise regular) joining $p_1$ to $p_2$. If one aims to compute the second variation
of (\ref{energyfunctional2}), or of its two end-manifold analog (\ref{energyfunctional}), at a critical point, it becomes convenient to dispose of a somehow compatible theory of connections. On the other hand,
the extra dependece on directions for the elements involved (caused by the lack of smoothness of $F$ at the null section) implies that the linear connections
that arise are naturally defined on (a vector subbundle of) the double tangent bundle of $M$ and there is not a canonical choice of such connection. Instead, there are in the literature many connections, each of them
characterized by some kind of compatibility conditions, the most notable ones being due to Berwald, Cartan, Chern and Rund, and Hashiguchi; see, for instance, \cite{zadeh68}, \cite{chern}, \cite{bucataru}, \cite{grifone2}.
We propose here a unified approach to the theory of connections which, in particular, makes clearer the well-known fact that all those classical connections provide exactly 
the same formulas appearing in the calculus of variations. Indeed, we exhibit a mild set of compatibility conditions under which any connection would work just as well.
\par A linear connection on the double tangent bundle of $M$ may, to a great extent, be thought of as a family of affine connections on $M$, and this has the advantage
of avoiding having to work with tensors on the tangent bundle and, thus, making some computations more similar to the way they are done in Riemannian geometry. Indeed, some authors 
have approached Finsler geometry through that point of view; see \cite{mathias}, \cite{rademacher}, \cite{shen} and \cite{miguel}. The present article works out in full generality the transition from linear connections
on the double tangent bundle to families of affine connections, stressing out what conditions a linear connection has to fulfil so as to assure that its corresponding family of affine
connections enjoy the nice properties that ultimately lead to the desired computations. For instance, as we show in $\S$\ref{curvatureendomsub}, a mild nullity
condition on the torsion suffices to guarantee that, under suitable assumption, the corresponding affine connections recover the curvature endomorphism of a spray (and, hence, the flag curvature
of a Finsler metric); this should be compared with \cite{miguel}, where the author established the same result in the particular case of the Chern-Rund affine connections. 
Besides, our approach clarifies why some notions such as the covariant derivative, the curvature endomorphism and the second fundamental form
appearing in (\ref{eq700}) do not depend upon the choice of the connection used to define them (see Corollary \ref{corollarycurvendospray}, Proposition \ref{propfundamental}
and Proposition \ref{propfundamentalform}). All these are carried out in $\S$\ref{sectionsprays} and $\S$\ref{sectionfinsler}, where we have
followed a coordinate-free approach in order to make the computations more geometric.
\par In $\S$\ref{sectionclassicalconnections}, we discuss briefly the already mentioned Finslerian classical connections. In particular we describe the families of affine connections 
corresponding to the Cartan and Hashiguchi linear connections, 
something that apparently was missing from
the literature. These descriptions lead to the remarkable conclusion that the Berwald and Hashiguchi linear connections induce the same family of affine connections,
and the same is true of the linear connections of Cartan and Chern-Rund. We acknowledge that this fact was first noticed by M. Javaloyes and communicated to the author. 
\par We conclude these notes with an appendix in which we show a symplectic, connection-free, definition of the second fundamental form introduced in $\S$\ref{subsectionsecondvariation}.

\section{Sprays, connections and curvature}\label{sectionsprays}
We begin by considering the theory of linear connections, and
their corresponding affine connections, with regard to a given spray on the manifold $M$. In $\S$3, all this formalism will be applied to the case of the geodesic spray of a
Finsler metric $F$.

\subsection{Notations}
The following notations and definitions will be used throughout this work:
\begin{itemize}
\item Given an open set $\mathcal{O}\subseteq M$ and a smooth curve $\lambda:I\subseteq\mathbb{R}\rightarrow M$, $\mathfrak{X}(\mathcal{O})$ and $\mathfrak{X}(\lambda)$ will denote, respectively,
the space of smooth vector fields on $\mathcal{O}$ and along $\lambda$.
\item $\pi:TM\backslash 0\rightarrow M$ will denote the tangent bundle without the null section. 
\item The {\it vertical distribution} on $TM\backslash 0$ is $w\mapsto \mathscr{V}_wTM={\rm ker}({\rm d}\pi(w))$. Vectors (or vector fields) tangent to $\mathscr{V}TM$ are called vertical.
\item The {\it vertical tangent bundle}
\begin{equation}\label{verticalbundle}
p:\mathscr{V}TM\rightarrow TM\backslash 0 
\end{equation}
is obtained by restricting to $\mathscr{V}TM$ the projection map $T(TM\backslash 0)\rightarrow TM\backslash 0$.
\item The {\it vertical lift at $w\in TM\backslash 0$} is the tautological isomorphism 
\begin{equation}\nonumber
i_w:T_{\pi(w)}M\rightarrow\mathscr{V}_wTM~,~~i_w(u)=(d/dt)|_{t=0}(w+t\cdot u).
\end{equation}
Through these maps, to any vector field $U$ defined along a map $f:\Sigma\rightarrow M$, we can associate a vector field $U^\mathfrak{v}$ defined along any 
lift $\overline{f}:\Sigma\rightarrow TM\backslash 0$
of $f$ (i.e. $\pi\circ\overline{f}=f$), called the {\it vertical lift of $U$} ({\it along $\overline{f}$}).
\item The {\it canonical vector field} on $TM\backslash 0$ is the vertical vector field $C$ defined by $C(w)=i_w(w)$.
\item The {\it almost-tangent structure} of $TM\backslash 0$ is the vector-bundle endomorphism
\begin{equation}\nonumber
\mathscr{J}:T(TM\backslash 0)\rightarrow T(TM\backslash 0)~,~~\mathscr{J}(X)=i_w({\rm d}\pi(X)) 
\end{equation}
for $X\in T_w(TM\backslash 0)$. Note that any vertical smooth vector field on $TM\backslash 0$ (i.e. any smooth section of (\ref{verticalbundle})) can be written, non-uniquely, as $\mathscr{J}(X)$
for some $X\in\mathfrak{X}(TM\backslash 0)$. 
\end{itemize}

\subsection{Sprays and connections}

\begin{definition}
A second order differential equation on $M$ is a smooth vector field $S$ on $TM\backslash 0$ such that $\mathscr{J}(S)=C$. This means that the integral curves
of $S$ are of the form $t\mapsto\dot{\gamma}(t)$, for some class of curves $\{\gamma\}$ in $M$ called the geodesics of $S$. If furthermore $[C,S]=S$, then $S$ is called a spray.
\end{definition}
For future reference, we state the following straightforward lemma (see \cite{grifone1}).

\begin{lemma}\label{lemmaspray}
If $S$ is a second order differential equation on $M$ and $X\in\mathfrak{X}(TM\backslash 0)$ is vertical, then $\mathscr{J}[X,S]=X$.
\end{lemma}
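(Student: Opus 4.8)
The plan is to reduce everything to a computation in natural coordinates, where the structures involved take their standard local forms. Let $(x^i,y^i)$ be the coordinates on $TM\backslash 0$ induced by a chart on $M$, so that $C=y^i\,\partial/\partial y^i$ and $\mathscr{J}(\partial/\partial x^i)=\partial/\partial y^i$, $\mathscr{J}(\partial/\partial y^i)=0$. First I would use the defining condition $\mathscr{J}(S)=C$ to pin down the horizontal part of $S$: writing $S=a^i\,\partial/\partial x^i+b^i\,\partial/\partial y^i$ and applying $\mathscr{J}$ gives $a^i\,\partial/\partial y^i=y^i\,\partial/\partial y^i$, hence $a^i=y^i$ and $S=y^i\,\partial/\partial x^i+b^i\,\partial/\partial y^i$ for some functions $b^i=b^i(x,y)$.

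Next, writing an arbitrary vertical field as $X=X^i\,\partial/\partial y^i$, I would compute the bracket $[X,S]$ componentwise, the key point being that $\mathscr{J}$ annihilates every $\partial/\partial y^k$, so only the $\partial/\partial x^k$-component of $[X,S]$ will matter. Since the $\partial/\partial x^k$-coefficient of $X$ vanishes while that of $S$ is exactly $y^k$, this component equals $X(y^k)-S(0)=X^k$, using $X(y^k)=X^i\delta^k_i=X^k$. Hence
\[ [X,S]=X^k\,\frac{\partial}{\partial x^k}+\Big(X(b^k)-S(X^k)\Big)\frac{\partial}{\partial y^k}, \]
and the vertical coefficients, however complicated, are irrelevant.

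Finally, applying $\mathscr{J}$ sends the surviving term $X^k\,\partial/\partial x^k$ to $X^k\,\partial/\partial y^k=X$ and kills the vertical part, yielding $\mathscr{J}[X,S]=X$. I do not anticipate a genuine obstacle here: the only point needing a word of care is that the local expressions for $\mathscr{J}$, $C$ and the condition $\mathscr{J}(S)=C$ are chart-independent, which is immediate from the intrinsic definitions, so the coordinate computation is legitimate. As a chart-free alternative one can argue from three standard properties of the almost-tangent structure, namely $\mathscr{J}^2=0$, the vanishing of its Nijenhuis tensor, and $\mathscr{L}_C\mathscr{J}=-\mathscr{J}$: writing $X=\mathscr{J}(Z)$, the Nijenhuis identity applied to the pair $(Z,S)$ gives $\mathscr{J}[X,S]=[X,C]-\mathscr{J}[Z,C]$, while $\mathscr{L}_C\mathscr{J}=-\mathscr{J}$ yields $[X,C]=\mathscr{J}[Z,C]+X$; the two copies of $\mathscr{J}[Z,C]$ then cancel, leaving $\mathscr{J}[X,S]=X$.
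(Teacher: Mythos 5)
Your proposal is correct. Note that the paper itself does not prove this lemma at all: it is stated as a ``straightforward lemma'' with a pointer to Grifone's paper, so your argument fills in a detail the paper outsources to the literature. Both of your routes are sound. In the coordinate computation, the identifications $C=y^i\,\partial/\partial y^i$, $\mathscr{J}(\partial/\partial x^i)=\partial/\partial y^i$, $\mathscr{J}(\partial/\partial y^i)=0$ are the correct local forms of the intrinsic definitions, the condition $\mathscr{J}(S)=C$ does force $S=y^i\,\partial/\partial x^i+b^i\,\partial/\partial y^i$, and the bracket bookkeeping $[X,S]=X^k\,\partial/\partial x^k+(\cdots)\,\partial/\partial y^k$ with $\mathscr{J}$ killing the vertical remainder is exactly right; since the identity $\mathscr{J}[X,S]=X$ is chart-independent, checking it chart by chart is legitimate, as you say. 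Your chart-free alternative is also correct (writing $X=\mathscr{J}(Z)$ is justified by the paper's own remark that every vertical field arises this way, non-uniquely, and the cancellation of the two $\mathscr{J}[Z,C]$ terms goes through, with the non-uniqueness of $Z$ causing no harm), and it is closer in spirit to the formalism of the cited reference, where the almost-tangent structure, its vanishing Nijenhuis tensor, and the homogeneity identity $\mathscr{L}_C\mathscr{J}=-\mathscr{J}$ are the basic tools. The coordinate proof buys elementarity and self-containedness; the intrinsic proof buys independence from charts and makes visible exactly which structural properties of $\mathscr{J}$ and $C$ are being used.
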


\begin{definition}
A connection on $M$ in the sense of Grifone, or simply a connection on $M$, is a smooth 1-form $\Gamma$ on $TM\backslash 0$ with values in $TM$ such that $-\Gamma$ is a
reflexion across $\mathscr{V}TM$, i.e. $\Gamma^2={\bf I}$ and ${\rm ker}(\Gamma+{\bf I})=\mathscr{V}TM$. We say that $\Gamma$ is 1-homogeneous if $[C,\Gamma]=0$.
\end{definition}

A connection $\Gamma$ on $M$ determines an Ehresmann connection on $\pi:TM\backslash 0\rightarrow M$ by defining $\mathscr{H}TM={\rm ker}(\Gamma-{\bf I})$, so that
\begin{equation}\label{decomposition}
T(TM\backslash 0)=\mathscr{H}TM\oplus\mathscr{V}TM.
\end{equation}
The corresponding projection operators will be denoted by $\mathscr{H}$ and $\mathscr{V}$, and vectors (resp. vector fields) tangent to $\mathscr{H}TM$ will be called {\it horizontal}.
We denote by $U^\mathfrak{h}$ the {\it horizontal lift} of a vector field $U\in\mathfrak{X}(M)$, which is the horizontal vector field on $TM\backslash 0$ such that ${\rm d}\pi(U^\mathfrak{h})=U$.
\\\\
By a result of Grifone \cite{grifone1}, a spray canonicaly determines a connection on the manifold:\footnote{We refer to \cite{grifone1} for the definition of a symmetric connection.}

\begin{theorem}
A spray $S$ on $M$ determines a unique symmetric and 1-homogeneous connection $\Gamma$ relatively to which $S$ is horizontal. It is called the {\sl canonical connection} associated to $S$ and
is given by $\Gamma_S=-[S,\mathscr{J}]$.\footnote{Here, $[S,\mathscr{J}]$ stands for the Lie derivative of the tensor field $\mathscr{J}$ along $S$.}
\end{theorem}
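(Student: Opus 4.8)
The plan is to verify directly that $\Gamma_S:=-[S,\mathscr J]$ satisfies every requirement and then to establish uniqueness. Writing the Lie derivative out as $\Gamma_S(X)=\mathscr J[S,X]-[S,\mathscr J X]$, the whole verification hinges on Lemma~\ref{lemmaspray}. First I would record the two structural identities $\mathscr J\Gamma_S=\mathscr J$ and $\Gamma_S\mathscr J=-\mathscr J$. For the former, $\mathscr J^2=0$ annihilates the second term, and since $\mathscr J X$ is vertical, Lemma~\ref{lemmaspray} gives $\mathscr J[S,\mathscr J X]=-\mathscr J X$, whence $\mathscr J\Gamma_S(X)=\mathscr J X$. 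For the latter, $\mathscr J X$ being vertical, it suffices to note that $\Gamma_S$ restricts to $-\mathbf I$ on $\mathscr VTM$: if $X$ is vertical then $\mathscr J X=0$ and Lemma~\ref{lemmaspray} yields $\mathscr J[S,X]=-X$, so $\Gamma_S(X)=-X$. Now $\mathscr J\Gamma_S=\mathscr J$ gives $\mathrm{Im}(\Gamma_S-\mathbf I)\subseteq\ker\mathscr J=\mathscr VTM$, while $\Gamma_S|_{\mathscr VTM}=-\mathbf I$ gives $(\Gamma_S-\mathbf I)|_{\mathscr VTM}=-2\,\mathbf I$; composing these yields $(\Gamma_S-\mathbf I)^2=-2(\Gamma_S-\mathbf I)$, i.e. $\Gamma_S^2=\mathbf I$. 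The same identities force $\ker(\Gamma_S+\mathbf I)=\mathscr VTM$ (the inclusion $\supseteq$ is $\Gamma_S|_{\mathscr VTM}=-\mathbf I$, and $\Gamma_S X=-X$ implies $\mathscr J X=\mathscr J\Gamma_S X=-\mathscr J X$, hence $X$ vertical). Thus $\Gamma_S$ is a connection.

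Next I would read off horizontality and homogeneity. Since $\mathscr J S=C$ and $[S,S]=0$, we get $\Gamma_S(S)=-[S,C]=[C,S]=S$ by the spray condition $[C,S]=S$, so $S$ is horizontal. For $1$-homogeneity I would use $[C,\mathscr J]=-\mathscr J$, the degree $-1$ homogeneity of the almost-tangent structure (checked at once in natural coordinates, where $C=y^i\partial_{y^i}$ and $\mathscr J\partial_{x^i}=\partial_{y^i}$, or see \cite{grifone1}). The commutation rule $L_CL_S-L_SL_C=L_{[C,S]}$ applied to $\mathscr J$, together with $[C,S]=S$, then gives $L_CL_S\mathscr J=L_S(-\mathscr J)+L_S\mathscr J=0$, that is $[C,\Gamma_S]=-L_CL_S\mathscr J=0$.

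For symmetry I would use that the torsion in Grifone's sense is proportional to the Fr\"olicher--Nijenhuis bracket $[\mathscr J,\Gamma]$, and that, by graded antisymmetry, $\Gamma_S=-[S,\mathscr J]=[\mathscr J,S]$. The graded Jacobi identity for the Fr\"olicher--Nijenhuis bracket gives $[\mathscr J,[\mathscr J,S]]=\tfrac12[[\mathscr J,\mathscr J],S]$, and $[\mathscr J,\mathscr J]=0$ since the almost-tangent structure is integrable (its Nijenhuis tensor vanishes, as is transparent in natural coordinates). Hence the torsion of $\Gamma_S$ vanishes, and $\Gamma_S$ is symmetric.

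The delicate point, which I expect to be the main obstacle, is uniqueness. Given another symmetric, $1$-homogeneous connection $\Gamma'$ making $S$ horizontal, the difference $D:=\Gamma'-\Gamma_S$ is automatically semibasic and vertical-valued: the identities $\mathscr J\Gamma=\mathscr J$ and $\Gamma\mathscr J=-\mathscr J$ hold for every connection, so $\mathscr J D=0$ and $D\mathscr J=0$, meaning $D$ kills $\mathscr VTM$ and maps into it. Horizontality of $S$ for both connections gives $DS=0$, $1$-homogeneity gives $[C,D]=0$ (so $D$ is homogeneous of degree one), and equality of the two torsions gives $[\mathscr J,D]=0$. The remaining task is to show that a semibasic, vertical-valued $1$-form that is annihilated by $S$, homogeneous of degree one, and has $[\mathscr J,D]=0$ must vanish. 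This is the technical heart; I would carry it out in natural coordinates, where these three conditions pin down the connection coefficients as $N^i_j=\partial G^i/\partial y^j$ (with $S=y^i\partial_{x^i}-2G^i\partial_{y^i}$) and thus force $D=0$, or else appeal directly to \cite{grifone1}.
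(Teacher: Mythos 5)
The paper offers no proof of this theorem at all: it is quoted as a result of Grifone and supported only by the citation \cite{grifone1}, so your proposal can only be measured against the literature and its own internal correctness. On that score, the existence half is complete and correct: the identities $\mathscr{J}\Gamma_S=\mathscr{J}$ and $\Gamma_S|_{\mathscr{V}TM}=-\mathbf{I}$ follow from Lemma \ref{lemmaspray} exactly as you say, and they do yield $\Gamma_S^2=\mathbf{I}$ and $\ker(\Gamma_S+\mathbf{I})=\mathscr{V}TM$; horizontality of $S$ is precisely the spray condition $[C,S]=S$; the homogeneity argument via $[C,\mathscr{J}]=-\mathscr{J}$ and $L_CL_S-L_SL_C=L_{[C,S]}$ is sound; and the symmetry argument through graded antisymmetry, the graded Jacobi identity and $[\mathscr{J},\mathscr{J}]=0$ is the standard Fr\"olicher--Nijenhuis computation.

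The genuine gap is the step you flag yourself: uniqueness. You correctly reduce it to showing that the semibasic, vertical-valued tensor $D=\Gamma'-\Gamma_S$, which satisfies $D(S)=0$, $[C,D]=0$ and $[\mathscr{J},D]=0$, must vanish, but you then defer the proof to coordinates or to \cite{grifone1}; deferring to Grifone would make your proof no more self-contained than the paper's bare citation, and as written the argument is unfinished. The missing step is, however, shorter than you anticipate and needs no coordinates, only Lemma \ref{lemmaspray} once more. Expand the Fr\"olicher--Nijenhuis bracket of the vector-valued $1$-forms $\mathscr{J}$ and $D$ on a pair $(S,Z)$ and discard every term containing $D(S)$, $\mathscr{J}D$ or $D\mathscr{J}$ (all of which vanish); what survives is
\begin{equation}\nonumber
[\mathscr{J},D](S,Z)=\bigl([C,DZ]-D[C,Z]\bigr)+D[\mathscr{J}Z,S]+\mathscr{J}[DZ,S].
\end{equation}
The first bracket is $(L_CD)(Z)=0$ by homogeneity. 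Since $DZ$ is vertical, Lemma \ref{lemmaspray} gives $\mathscr{J}[DZ,S]=DZ$. Applying Lemma \ref{lemmaspray} to the vertical field $\mathscr{J}Z$ shows that $[\mathscr{J}Z,S]-Z$ lies in $\ker\mathscr{J}=\mathscr{V}TM$, so semibasicity of $D$ gives $D[\mathscr{J}Z,S]=DZ$. Hence $[\mathscr{J},D](S,\cdot)=2D$, and the vanishing of both torsions forces $D=0$. (Your coordinate alternative, pinning the coefficients to $N^i_j=\partial G^i/\partial y^j$, also works, but the bracket evaluation above keeps the argument in the coordinate-free spirit of the paper.) With this step supplied, your proof is complete and is in fact more informative than the paper's treatment of the statement.
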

\textsl{Until the end of this section, we let be fixed a spray $S$ on $M$ with associated canonical connection $\Gamma_S$}.

\subsection{Linear connections on the vertical tangent bundle}\label{linearconnectionsub}
Following Grifone \cite{grifone2}, we define

\begin{definition}
Let $\nabla$ be a linear connection on $p:\mathscr{V}TM\rightarrow TM\backslash 0$. Then,
\begin{enumerate}
\item We call $\nabla$ {\it almost-projectable} if $\nabla_XC=X$ for all $X$ vertical.
\item We say that $\nabla$ {\it projects onto} $\Gamma_S$, or that it is a {\it lift} of $\Gamma_S$, if $\nabla$ is almost-projectable and
${\rm ker}(\varphi)=\mathscr{H}TM$, where
\begin{equation}\nonumber
\varphi: T(TM\backslash 0) \rightarrow\mathscr{V}TM~,~~\varphi(X)=\nabla_XC.
\end{equation}
Thus, $\nabla$ is a lift of $\Gamma_S$ precisely when $\nabla_XC=\mathscr{V}X$, for all $X\in T(TM\backslash 0)$.
\end{enumerate}
\end{definition}

The appropriate notion of torsion for linear connections on (\ref{verticalbundle}) is given by the following (see \cite{zadeh68}).
\begin{definition}
The {\it torsion} of a linear connection $\nabla$ on $p:\mathscr{V}TM\rightarrow TM\backslash 0$ is the $\mathscr{V}TM$-valued tensor field on $TM\backslash 0$ given by
\begin{equation}\nonumber
{\rm T}(X,Y)=\nabla_X\mathscr{J}(Y)-\nabla_Y\mathscr{J}(X)-\mathscr{J}[X,Y]~,~~X,Y\in\mathfrak{X}(TM\backslash 0).
\end{equation}
\end{definition}

\subsubsection{Nullity conditions for the torsion}
We will be interested in the lifts $\nabla$ of $\Gamma_S$ whose torsion satisfies one of the following conditions (actually, the first one will suffice; we have included 
the other ones as they are fulfilled by the classical connections discussed in $\S$\ref{sectionclassicalconnections})
\begin{eqnarray}
{\bf (T1)} & & \mbox{${\rm T}(S,Y)=0$ for all $Y$.}\nonumber\\
{\bf (T2)} & & \mbox{${\rm T}(\mathscr{H}X,\mathscr{H}Y)=0$ for all $X,Y$.}\nonumber\\
{\bf (T3)} & & \mbox{${\rm T}(X,Y)=0$ for all $X,Y$.}\nonumber
\end{eqnarray}

\noindent Next we show that, {\it as long as {\bf (T1)} holds, applying $\nabla$ in the direction of $S$ does not depend on the choice of $\nabla$.}

\begin{lemma}\label{lemmafundamental}
Let $\nabla$ be a lift of $\Gamma_S$.
\begin{enumerate}
\item If $Y$ is vertical, ${\rm T}(S,Y)=0$. Therefore, ${\bf (T2)}\Rightarrow{\bf (T1)}$.
\item If {\bf (T1)} holds, then $\nabla_S\mathscr{J}(Y)=\mathscr{V}[S,\mathscr{J}(Y)]$ for all $Y\in\mathfrak{X}(TM\backslash 0)$.
\end{enumerate}
\end{lemma}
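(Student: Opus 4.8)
The plan is to settle the two items by expanding the torsion and then rewriting the resulting Lie brackets by means of the defining property $\Gamma_S=-[S,\mathscr{J}]$ of the canonical connection. Throughout I would rely on three facts: that $\mathscr{J}(Y)=0$ whenever $Y$ is vertical (since then ${\rm d}\pi(Y)=0$), the identity $\mathscr{J}(S)=C$ built into the definition of a second order equation, and the lift condition $\nabla_X C=\mathscr{V}X$.

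For item (1), I would write out ${\rm T}(S,Y)=\nabla_S\mathscr{J}(Y)-\nabla_Y\mathscr{J}(S)-\mathscr{J}[S,Y]$ for vertical $Y$. The first term vanishes because $\mathscr{J}(Y)=0$; the second equals $\nabla_Y C=\mathscr{V}Y=Y$, using $\mathscr{J}(S)=C$, the lift property, and the verticality of $Y$; and the third is $-Y$, since Lemma \ref{lemmaspray} gives $\mathscr{J}[Y,S]=Y$ and hence $\mathscr{J}[S,Y]=-Y$. The three contributions cancel, so ${\rm T}(S,Y)=0$. For the implication ${\bf (T2)}\Rightarrow{\bf (T1)}$ I would invoke the tensoriality of ${\rm T}$: splitting $Y=\mathscr{H}Y+\mathscr{V}Y$, the horizontal part gives ${\rm T}(S,\mathscr{H}Y)={\rm T}(\mathscr{H}S,\mathscr{H}Y)=0$ by {\bf (T2)} (recall $S$ is horizontal), while ${\rm T}(S,\mathscr{V}Y)=0$ by the vertical case already proved.

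For item (2), the starting point is that {\bf (T1)} rearranges to $\nabla_S\mathscr{J}(Y)=\nabla_Y\mathscr{J}(S)+\mathscr{J}[S,Y]=\mathscr{V}Y+\mathscr{J}[S,Y]$. The crux is to trade $\mathscr{J}[S,Y]$ for a bracket of $\mathscr{J}(Y)$. I would do this with the Lie-derivative expansion $(\mathscr{L}_S\mathscr{J})(Y)=[S,\mathscr{J}(Y)]-\mathscr{J}[S,Y]$; since $\mathscr{L}_S\mathscr{J}=[S,\mathscr{J}]=-\Gamma_S$, this yields the key identity $\mathscr{J}[S,Y]=[S,\mathscr{J}(Y)]+\Gamma_S(Y)$.

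Substituting this identity and using that $\mathscr{V}+\Gamma_S$ acts as $\mathscr{H}$ (a consequence of $\mathscr{V}=\tfrac{1}{2}(\mathbf{I}-\Gamma_S)$), I expect $\nabla_S\mathscr{J}(Y)$ to collapse to $\mathscr{H}Y+[S,\mathscr{J}(Y)]$. It then remains to recognise this as $\mathscr{V}[S,\mathscr{J}(Y)]$, which reduces to the claim $\mathscr{H}[S,\mathscr{J}(Y)]=-\mathscr{H}Y$. Applying $\mathscr{H}$ to the same identity $[S,\mathscr{J}(Y)]=\mathscr{J}[S,Y]-\Gamma_S(Y)$ settles this, because $\mathscr{H}\mathscr{J}[S,Y]=0$ (the image of $\mathscr{J}$ is vertical) and $\mathscr{H}\Gamma_S=\mathscr{H}$ (as $\Gamma_S$ is the identity on the horizontal distribution). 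The main obstacle here is not conceptual but bookkeeping: the whole point is the precise cancellation of the horizontal parts, so I would keep careful track of signs through the Lie-derivative identity and the reflexion algebra; once $\mathscr{J}[S,Y]=[S,\mathscr{J}(Y)]+\Gamma_S(Y)$ is secured, the rest is routine.
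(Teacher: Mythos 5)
Your proposal is correct and follows essentially the same route as the paper: both expand the torsion using $\mathscr{J}(S)=C$, the lift property, and Lemma \ref{lemmaspray} for item (1), and for item (2) both rest on the identity $\Gamma_S=-[S,\mathscr{J}]$ together with the Lie-derivative expansion $[S,\mathscr{J}](Y)=[S,\mathscr{J}(Y)]-\mathscr{J}[S,Y]$. The only difference is bookkeeping: the paper applies $\mathscr{V}$ directly via $\mathscr{V}Y=-\mathscr{V}\Gamma_S(Y)$ to collapse $\mathscr{V}Y+\mathscr{J}[S,Y]$ into $\mathscr{V}[S,\mathscr{J}(Y)]$ in one string of equalities, whereas you track the horizontal cancellation separately — an equivalent, slightly longer arrangement of the same computation.
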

\begin{proof}
(1) From
$\mathscr{J}(Y)=0$ and $\mathscr{J}(S)=C$ we obtain ${\rm T}(S,Y)=-\nabla_YC-\mathscr{J}[S,Y]$, which, due to the almost-projectability of $\nabla$, is equal to $-Y-\mathscr{J}[S,Y]$. Now the claim follows
from Lemma \ref{lemmaspray}.
(2) From $\nabla_Y\mathscr{J}(S)=\nabla_YC=\mathscr{V}Y$, we obtain $0={\rm T}(S,Y)=\nabla_S\mathscr{J}(Y)-\mathscr{V}Y-\mathscr{J}[S,Y]$. Now, from $\mathscr{V}Y=-\mathscr{V}\Gamma_S(Y)$
and $\Gamma_S=-[S,\mathscr{J}]$,
we obtain, respectively
\begin{eqnarray}
\mathscr{V}Y+\mathscr{J}[S,Y] & = & \mathscr{V}\bigl(-\Gamma_S(Y)+\mathscr{J}[S,Y]\bigr)\nonumber\\
& = & \mathscr{V}\bigl([S,\mathscr{J}](Y)+\mathscr{J}[S,Y]\bigr)\nonumber\\
& = & \mathscr{V}[S,\mathscr{J}(Y)].\nonumber
\end{eqnarray}
\end{proof}
\textsl{Until the end of this section, let be fixed a lift $\nabla$ of $\Gamma_S$.}

\subsubsection{The tensors $\mathscr{C}$ and $\mathcal{C}$}\label{tensorsCsubsub}
When dealing with the transition to the corresponding family of affine connections in $\S$\ref{affineconnectionsub}, it will prove useful 
to consider the following ``vertical part'' of the torsion ${\rm T}$.
\begin{definition}
\begin{enumerate}
\item We define a $\mathscr{V}TM$-valued tensor field $\mathscr{C}$ on $TM\backslash 0$ by
$\mathscr{C}(X,Y)={\rm T}(\mathscr{J}(X),Y)$.
\item For each $w\in TM\backslash 0$, we define $\mathcal{C}_w:T_{\pi(w)}M\times T_{\pi(w)}M\rightarrow T_{\pi(w)}M$ by
$\mathcal{C}_w(u,v)={i_w}^{-1}\mathscr{C}(X,Y)$,
where $X,Y\in T_wTM$ are any lifts of $u,v$. The well-definability of $\mathcal{C}_w$ will follow from the lemma below.
\end{enumerate}
\end{definition}

\begin{lemma}\label{lemmaC} We have
\begin{enumerate}
\item The tensor field $\mathscr{C}$ is semi-basic, i.e, $\mathscr{C}(X,Y)=0$ whenever at least one of $X,Y$ is vertical.
\item $\mathscr{C}(\cdot,S)=0$. Equivalently, $\mathcal{C}_w(\cdot,w)=0$ for all $w$.
\end{enumerate}
\end{lemma}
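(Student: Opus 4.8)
The plan is to first record two elementary structural facts about the almost-tangent structure and the torsion, and then read off both claims almost for free. Recall that $\mathscr{J}$ annihilates every vertical vector (its image equals ${\rm d}\pi$, which kills $\mathscr{V}TM$) and that $\mathscr{J}$ takes values in $\mathscr{V}TM$; in particular $\mathscr{J}(X)$ is vertical for every $X$, and $\mathscr{J}^2=0$. I would also note that ${\rm T}$ is antisymmetric: adding the defining expressions for ${\rm T}(X,Y)$ and ${\rm T}(Y,X)$ cancels the two pairs of covariant-derivative terms and leaves $-\mathscr{J}([X,Y]+[Y,X])=0$. Finally, I would record that the vertical distribution is involutive, being the tangent distribution to the fibres of $\pi$; equivalently, $\mathscr{J}[V,W]=0$ whenever $V,W$ are vertical, since then ${\rm d}\pi[V,W]=0$.

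For part (1), since $\mathscr{C}$ is tensorial it suffices to test on vector fields, and $\mathscr{C}(X,Y)={\rm T}(\mathscr{J}(X),Y)$ depends on $X$ only through the vertical vector $\mathscr{J}(X)$. If $X$ is vertical then $\mathscr{J}(X)=0$ and the claim is immediate. If instead $Y$ is vertical, then both arguments $\mathscr{J}(X)$ and $Y$ of ${\rm T}$ are vertical; feeding two vertical vectors into the definition of ${\rm T}$ kills the covariant-derivative terms (as $\mathscr{J}$ vanishes on verticals and $\mathscr{J}^2=0$) and leaves $-\mathscr{J}[\mathscr{J}(X),Y]$, which vanishes by involutivity of the vertical distribution. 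This shows that $\mathscr{C}$ is semi-basic, and in particular that $\mathscr{C}(X,Y)$ depends only on ${\rm d}\pi(X)$ and ${\rm d}\pi(Y)$, which is precisely what makes $\mathcal{C}_w$ well defined.

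For part (2), I would exploit the antisymmetry established above together with Lemma \ref{lemmafundamental}(1). Since $\mathscr{J}(X)$ is vertical,
\begin{equation}\nonumber
\mathscr{C}(X,S)={\rm T}(\mathscr{J}(X),S)=-{\rm T}(S,\mathscr{J}(X))=0,
\end{equation}
the last equality being exactly the content of Lemma \ref{lemmafundamental}(1). For the equivalent pointwise statement, I would observe that ${\rm d}\pi(S(w))=w$ (this follows from $\mathscr{J}(S)=C$ and $C(w)=i_w(w)$), so that $S(w)$ is a lift of $w$; hence $\mathcal{C}_w(u,w)=i_w^{-1}\mathscr{C}(X,S)=0$ for any lift $X$ of $u$.

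I do not anticipate a genuine obstacle: the only point requiring care is the involutivity of the vertical distribution invoked in part (1), which is the single ingredient not already encoded in the torsion identities and the earlier lemma; everything else follows directly from the definitions.
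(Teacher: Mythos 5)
Your proof is correct and takes essentially the same route as the paper: part (1) is the same computation (the paper phrases it via the identity $\nabla_{\mathscr{J}(X)}\mathscr{J}(Y)=\mathscr{J}[\mathscr{J}(X),Y]+\mathscr{C}(X,Y)$, which is just the definition of ${\rm T}$ after using $\mathscr{J}^2=0$, together with involutivity of the vertical distribution), and part (2) is, exactly as you argue, Lemma \ref{lemmafundamental}(1) combined with the skew-symmetry of ${\rm T}$. You merely make explicit some details the paper leaves implicit (the verification of skew-symmetry, the trivial case where $X$ is vertical, and the pointwise reformulation using that $S(w)$ is a lift of $w$), all of which are fine.
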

\begin{proof}
For (1), just note that
\begin{equation}\label{eq3}
\nabla_{\mathscr{J}(X)}\mathscr{J}(Y)=\mathscr{J}[\mathscr{J}(X),Y]+\mathscr{C}(X,Y),
\end{equation}
and that $[\mathscr{J}(X),Y]$ is vertical and, thus, $\mathscr{J}[\mathscr{J}(X),Y]=0$, whenever $Y$ is vertical. 
(2) is just a reformulation of (1) of Lemma \ref{lemmafundamental} since $T$ is skew-symmetric.
\end{proof}

\subsubsection{The curvature endomorphisms of a spray}\label{subsubcurvspray}
Denote by $\mathcal{R}$ the curvature tensor of $\nabla$,
\begin{equation}\nonumber
\mathcal{R}(X,Y)\mathscr{J}(Z)=\nabla_{[X,Y]}\mathscr{J}(Z)-[\nabla_X,\nabla_Y]\mathscr{J}(Z).
\end{equation}
Our aim here is to show that the following definition is intrinsic to the spray $S$.
\begin{definition}
Given $w\in TM\backslash 0$, the {\it curvature endomorphism in the direction $w$} is
the map ${\bf R}_w:T_{\pi(w)}M\rightarrow T_{\pi(w)}M$,
\begin{equation}\nonumber
{\bf R}_w(u)=i_w\hspace{0.01cm}^{-1}\mathcal{R}(w^\mathfrak{h},u^\mathfrak{h})w^\mathfrak{v}=i_w\hspace{0.01cm}^{-1}\mathcal{R}(S,u^\mathfrak{h})C
\end{equation}
\end{definition}

\begin{lemma}\label{lemmacurvatura}
We have
\begin{enumerate}
\item If $X\in\mathfrak{X}(TM\backslash 0)$ is horizontal, then $\mathcal{R}(S,X)C=\mathscr{V}[S,X]$.
\item If {\bf (T1)} holds and if $X$ is vertical, then $\mathcal{R}(S,X)C=0$.
\end{enumerate}
\end{lemma}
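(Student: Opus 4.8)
The plan is to observe that $C$ is a vertical section of $\mathscr{V}TM$ (indeed $C=\mathscr{J}(S)$), so that $\mathcal{R}(S,X)C$ falls under the definition of the curvature, and then to expand it directly. Since $\mathcal{R}$ is tensorial in its first two slots, I may treat $S$ and $X$ as honest vector fields and write
\begin{equation}\nonumber
\mathcal{R}(S,X)C=\nabla_{[S,X]}C-\nabla_S\nabla_XC+\nabla_X\nabla_SC .
\end{equation}
The entire argument then reduces to reading off each of the three terms using two facts available for \emph{any} lift of $\Gamma_S$: the defining identity $\nabla_YC=\mathscr{V}Y$ for all $Y$, and the horizontality of $S$ with respect to $\Gamma_S$, which gives $\nabla_SC=\mathscr{V}S=0$.

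For part (1), I would use that $X$ horizontal forces $\nabla_XC=\mathscr{V}X=0$, so the second term vanishes as $\nabla_S\nabla_XC=\nabla_S 0=0$, and the third vanishes likewise as $\nabla_X\nabla_SC=\nabla_X 0=0$. Only the first term survives, $\nabla_{[S,X]}C=\mathscr{V}[S,X]$, which is exactly the asserted value. No torsion hypothesis enters here, which is why part (1) is unconditional.

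For part (2), with $X$ vertical the expansion is identical except that now $\nabla_XC=\mathscr{V}X=X$ is no longer zero, so the second term becomes $\nabla_S\nabla_XC=\nabla_SX$; the third term still vanishes because $\nabla_SC=0$. Hence $\mathcal{R}(S,X)C=\mathscr{V}[S,X]-\nabla_SX$, and the crux is to show $\nabla_SX=\mathscr{V}[S,X]$. Here I would write the vertical field as $X=\mathscr{J}(Y)$ and invoke Lemma \ref{lemmafundamental}(2), which under \textbf{(T1)} asserts precisely $\nabla_S\mathscr{J}(Y)=\mathscr{V}[S,\mathscr{J}(Y)]$; the two terms then cancel and the expression is $0$.

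I do not anticipate a serious analytic or geometric obstacle: the computation is essentially bookkeeping of which terms vanish. The points deserving care are, first, the sign convention, since the $\mathcal{R}$ defined in the excerpt is the negative of the usual curvature and the expansion of $-[\nabla_S,\nabla_X]$ must be kept straight; and second, the precise role of the hypotheses. Part (1) works for every lift because $\nabla_XC$ already vanishes by horizontality, whereas in part (2) the nonzero term $\nabla_SX$ must be absorbed, and it is exactly this term that \textbf{(T1)}, through Lemma \ref{lemmafundamental}(2), is tailored to control.
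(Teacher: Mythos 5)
Your proof is correct and follows essentially the same route as the paper: expand $\mathcal{R}(S,X)C$ from the definition, use the lift property $\nabla_YC=\mathscr{V}Y$ together with the horizontality of $S$ to kill the appropriate terms, and in part (2) absorb the leftover $\nabla_SX$ via Lemma \ref{lemmafundamental}(2) under \textbf{(T1)}. Your added remarks on the sign convention and on writing the vertical field as $\mathscr{J}(Y)$ are accurate but do not change the argument.
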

\begin{proof}
(1) Let $X\in\mathfrak{X}(TM\backslash 0)$ be horizontal. As $S$ is horizontal as well and $\nabla$ is a lift of $\Gamma_S$, then
$\nabla_SC=\nabla_XC=0$ and $\nabla_{[S,X]}C=\mathscr{V}[S,X]$. Therefore, $\mathcal{R}(S,X)C=\mathscr{V}[S,X]$. (2) If $X$ is vertical,
then $\nabla_XC=X$ and thus $\mathcal{R}(S,X)C=\mathscr{V}[S,X]-\nabla_SX$. If, furthermore, {\bf (T1)} holds, from Lemma \ref{lemmafundamental}
we obtain $\nabla_SX=\mathscr{V}[S,X]$ and therefore $\mathcal{R}(S,X)C=0$.
\end{proof}
\begin{corollary}\label{corollarycurvendospray}
The curvature endomorphisms ${\bf R}_w$, for $w\in TM\backslash 0$, do not depend on the choice of the lift $\nabla$, but only on the spray $S$. Furthermore, if {\bf (T1)} holds then \textsl{for any} lift $X$ of $u$ at $w$,
\begin{equation}\label{remarkcurvature}
{\bf R}_w(u)=i_w\hspace{0.01cm}^{-1}\mathcal{R}(S,X)C.
\end{equation}
\end{corollary}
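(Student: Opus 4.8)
The plan is to reduce both assertions to the two parts of Lemma \ref{lemmacurvatura}, exploiting that the curvature $\mathcal{R}$ is tensorial in its arguments, so that $\mathcal{R}(S,X)C$ evaluated at a point $w$ depends only on the values of $S$, $X$ and $C$ there.

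For the independence statement, I would begin from the definition ${\bf R}_w(u) = i_w\hspace{0.01cm}^{-1}\mathcal{R}(S, u^\mathfrak{h})C$ and apply part (1) of Lemma \ref{lemmacurvatura}, which is legitimate since the horizontal lift $u^\mathfrak{h}$ is horizontal. This yields $\mathcal{R}(S, u^\mathfrak{h})C = \mathscr{V}[S, u^\mathfrak{h}]$. The crucial observation is that every ingredient on the right-hand side---the spray $S$, the horizontal lift $u^\mathfrak{h}$ (determined by $\mathscr{H}TM = {\rm ker}(\Gamma_S - {\bf I})$) and the vertical projection $\mathscr{V}$---is built solely out of the canonical connection $\Gamma_S$, hence depends only on $S$ and not on the chosen lift $\nabla$. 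Therefore ${\bf R}_w(u) = i_w\hspace{0.01cm}^{-1}\mathscr{V}[S, u^\mathfrak{h}]$ is intrinsic to the spray, which settles the first claim (and note that part (1) of Lemma \ref{lemmacurvatura} does not require {\bf (T1)}).

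For the formula (\ref{remarkcurvature}), I would take an arbitrary lift $X$ of $u$ at $w$ and decompose it according to (\ref{decomposition}) as $X = \mathscr{H}X + \mathscr{V}X$. Since ${\rm d}\pi$ annihilates vertical vectors, the horizontal component satisfies ${\rm d}\pi(\mathscr{H}X) = u$, so $\mathscr{H}X = u^\mathfrak{h}$, while $\mathscr{V}X$ is vertical. The tensoriality of $\mathcal{R}$ in its second slot then gives $\mathcal{R}(S,X)C = \mathcal{R}(S, u^\mathfrak{h})C + \mathcal{R}(S, \mathscr{V}X)C$. Assuming {\bf (T1)}, part (2) of Lemma \ref{lemmacurvatura} forces the vertical term to vanish, so $\mathcal{R}(S,X)C = \mathcal{R}(S, u^\mathfrak{h})C$, and applying $i_w\hspace{0.01cm}^{-1}$ recovers ${\bf R}_w(u)$.

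The only point requiring care is this last tensoriality step: I expect the main (and rather minor) obstacle to be the bookkeeping needed to justify evaluating $\mathcal{R}(S, X)C$ at the single point $w$ for a tangent vector $X$ rather than a vector field, and to split it linearly into horizontal and vertical pieces. This is immediate once one recalls that the curvature of a linear connection is $C^\infty$-bilinear in its first two arguments and $C^\infty$-linear in the section on which it acts, so that its value at $w$ depends only on the pointwise data $S(w)$, $X$ and $C(w)$.
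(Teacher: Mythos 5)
Your proof is correct and is exactly the intended argument: the paper states this as an immediate corollary of Lemma \ref{lemmacurvatura} (with no written proof), namely part (1) gives ${\bf R}_w(u)=i_w\hspace{0.01cm}^{-1}\mathscr{V}[S,u^\mathfrak{h}]$, whose right-hand side is built only from $S$ and $\Gamma_S$, and part (2) together with the decomposition $X=\mathscr{H}X+\mathscr{V}X=u^\mathfrak{h}+\mathscr{V}X$ and tensoriality of $\mathcal{R}$ yields (\ref{remarkcurvature}). Your attention to the pointwise (tensorial) nature of $\mathcal{R}$ and your remark that the independence claim needs no {\bf (T1)} are both accurate and match the paper's statement.
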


\subsection{The associated family of affine connections on $M$}\label{affineconnectionsub}

\begin{definition}\label{definaffine}
Let $w\in TM\backslash 0$. If $\mathcal{O}\subseteq M$ is an open neighborhood of $x=\pi(w)$, we define 
\begin{equation}\nonumber
D^w_\cdot\cdot:T_xM\times\mathfrak{X}(\mathcal{O})\rightarrow T_xM~,~~D^w_uV=i_w\hspace{0.01cm}^{-1}\nabla_{u^\mathfrak{h}}V^\mathfrak{v}, 
\end{equation}
where $u^\mathfrak{h}$ is the horizontal lift of $u$ at $w$.
\end{definition}

\noindent The map $D^w_\cdot\cdot:T_xM\times\mathfrak{X}(\mathcal{O})\rightarrow T_xM$ just defined is $\mathbb{R}$-linear in $u$ and $V$ and satisfies the Leibniz rule
for $V$. In particular, for each smooth curve $\lambda:I\rightarrow M$ through $x$ at $t=t_0$ it induces a map $D^w/dt:\mathfrak{X}(\lambda)\rightarrow T_xM$
satisfying the properties of a covariant derivative. In terms of $\nabla$,
\begin{equation}\label{covariantderivative}
\frac{D^w V}{dt}=i_w\hspace{0.01cm}^{-1}(\frac{\nabla V^\mathfrak{v}}{dt})(t_0),
\end{equation}
where $V^\mathfrak{v}(t)$ is the vertical lift of $V(t)$ along the horizontal lift
$\overline{\lambda}:I\rightarrow TM\backslash 0$ of $\lambda:I\rightarrow M$ through $w$ at $t=t_0$.\\\\
By considering $W\in\mathfrak{X}(\mathcal{O})$ (resp., $W(t)\in\mathfrak{X}(\lambda)$, for some smooth curve $\lambda$) nowhere null, we thus obtain well-defined maps
\begin{equation}\nonumber
\begin{array}{cccc}
D^W_{\cdot}\cdot: & \mathfrak{X}(\mathcal{O})\times\mathfrak{X}(\mathcal{O}) & \rightarrow & \mathfrak{X}(\mathcal{O})\\
D^W/dt: & \mathfrak{X}(\lambda) & \rightarrow & \mathfrak{X}(\lambda)\\
\end{array}
\end{equation}
satisfying the properties of being an affine connection on $\mathcal{O}$ and a covariant derivative along $\lambda$, respectively. 

\begin{definition}
We call $\{D^W\}_{(\mathcal{O},W)}$ the family of affine connections corresponding to $\nabla$.
\end{definition}

\begin{proposition}\label{propfundamental}
Let $W\in\mathfrak{X}(\mathcal{O})$, nowhere null, and $\lambda:I\rightarrow M$ a regular curve on $M$.
\begin{enumerate}
\item If {\bf (T1)} holds, then the map $D^W_W\cdot:\mathfrak{X}(\mathcal{O})\rightarrow\mathfrak{X}(\mathcal{O})$ \textsl{does not depend on the choice of $\nabla$, but only on the spray $S$};
equivalently, if {\bf (T1)} holds, the map $D^{\dot{\lambda}}/dt:\mathfrak{X}(\lambda)\rightarrow\mathfrak{X}(\lambda)$ \textsl{is intrinsic to the spray $S$}.
\item If {\bf (T3)} holds, then we could have chosen any lift of $u$ at $w$ in Definition \ref{definaffine} (resp., any lift of $\lambda$ through $w$ at $t=t_0$ in (\ref{covariantderivative})).
\end{enumerate}
\end{proposition}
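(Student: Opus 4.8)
The plan is to treat the two parts separately, each time reducing the defining formula to an expression that visibly involves only the spray $S$ (for part (1)) or is insensitive to adding a vertical vector to the differentiation direction (for part (2)).

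For part (1), the crucial observation is that on the ``diagonal'' the horizontal lift of the reference direction is nothing but the spray itself: since $S$ is horizontal and $\mathscr{J}(S)=C$ forces ${\rm d}\pi(S(w))=w$, we have $S(w)=w^\mathfrak{h}$ for every $w$. Evaluating $D^W_WV$ at $x$ with $w=W(x)$ thus gives $D^W_WV(x)=i_w^{-1}\nabla_{S(w)}V^\mathfrak{v}$. Writing the vertical field $V^\mathfrak{v}$ as $\mathscr{J}(V^\mathfrak{h})$ and invoking Lemma~\ref{lemmafundamental}(2), which is available precisely because \textbf{(T1)} holds, I would rewrite $\nabla_SV^\mathfrak{v}=\nabla_S\mathscr{J}(V^\mathfrak{h})=\mathscr{V}[S,V^\mathfrak{v}]$. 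The right-hand side is built only from $S$ (hence from $\Gamma_S$, $\mathscr{V}$ and the vertical lift), so it does not see the choice of $\nabla$; this settles the first assertion.

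The curve formulation is the same computation read along a curve. For $D^{\dot\lambda}/dt$ one lifts $\lambda$ horizontally through $w=\dot\lambda(t_0)$, so that the velocity of $\overline\lambda$ at $t_0$ is exactly $S(w)=w^\mathfrak{h}$; after extending $V$ to a field $\tilde V$ on a neighborhood (legitimate, since a covariant derivative along a curve depends only on the field along the curve and on the velocity), the covariant derivative becomes $(\nabla_S\tilde V^\mathfrak{v})(w)=\mathscr{V}[S,\tilde V^\mathfrak{v}](w)$, again $\nabla$-independent. The stated equivalence between the intrinsicness of $D^W_W\cdot$ and that of $D^{\dot\lambda}/dt$ is then just the standard correspondence between an affine connection and the covariant derivative it induces along the integral curves of $W$.

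For part (2) it suffices to show that, under \textbf{(T3)}, $\nabla_ZV^\mathfrak{v}=0$ whenever $Z$ is vertical, since an arbitrary lift $X$ of $u$ at $w$ differs from $u^\mathfrak{h}$ by such a $Z$ and $\nabla_XV^\mathfrak{v}=\nabla_{u^\mathfrak{h}}V^\mathfrak{v}+\nabla_ZV^\mathfrak{v}$. Extending $Z$ to a vertical field and writing $Z=\mathscr{J}(X')$, $V^\mathfrak{v}=\mathscr{J}(V^\mathfrak{h})$, formula (\ref{eq3}) yields $\nabla_ZV^\mathfrak{v}=\mathscr{J}[Z,V^\mathfrak{h}]+\mathscr{C}(X',V^\mathfrak{h})$. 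The term $\mathscr{C}(X',V^\mathfrak{h})$ vanishes because \textbf{(T3)} makes ${\rm T}$, and hence $\mathscr{C}$, identically zero; and $\mathscr{J}[Z,V^\mathfrak{h}]=0$ because $Z$ is $\pi$-related to the zero field and $V^\mathfrak{h}$ is $\pi$-related to $V$, so $[Z,V^\mathfrak{h}]$ is $\pi$-related to $0$, i.e. vertical. The curve version is identical: two lifts of $\lambda$ through $w$ have velocities differing by a vertical vector, and the resulting covariant derivatives differ by $\nabla_Z\tilde V^\mathfrak{v}=0$.

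I expect the main obstacle to be bookkeeping rather than a deep idea: one must keep clear the distinction between the reference point $w$ at which everything is evaluated and the direction of differentiation, and must justify replacing the covariant derivative along a curve by $\nabla_{(\text{velocity})}$ of an ambient vertical field. Pinning down the identity $S(w)=w^\mathfrak{h}$ is the real engine of part (1), while recognizing that only the full torsion nullity \textbf{(T3)} (not \textbf{(T1)} or \textbf{(T2)}) kills both $\mathscr{C}$ and the relevant vertical bracket is the point of part (2).
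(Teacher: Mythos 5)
Your proof is correct and follows essentially the same route as the paper: part (1) rests on the identity $S(w)=w^{\mathfrak{h}}$ (i.e.\ $W^{\mathfrak{h}}=S$ along $W(\mathcal{O})$) combined with Lemma~\ref{lemmafundamental}(2), and part (2) reduces to showing $\nabla_Z V^{\mathfrak{v}}=0$ for vertical $Z$, which you obtain from the torsion identity (via (\ref{eq3}) and the vanishing of $\mathscr{C}$, a trivial rephrasing of the paper's direct use of ${\rm T}=0$) together with the verticality of the bracket of a vertical field with a projectable one. No gaps; the extra care you take with the along-a-curve formulation is a point the paper leaves implicit.
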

\begin{proof}
(1) Along $W(\mathcal{O})\subset TM\backslash 0$, $W^\mathfrak{h}$ coincides with $S$. So, $D^W_WU=i_W\hspace{0.01cm}^{-1}\nabla_SU^\mathfrak{v}$ which according to (2) of Lemma \ref{lemmafundamental},
does not depend on the choice of $\nabla$. For (2), we have to show that $\nabla_X V^\mathfrak{v}=0$ if $X$ is vertical and {\bf (T3)} holds. Choosing $X\in\mathfrak{X}(TM\backslash 0)$ vertical in ${\rm T}(X,Y)=0$
we obtain $\nabla_X\mathscr{J}(Y)=\mathscr{J}[X,Y]$.
If, furthermore, $Y$ is projectable, than $[X,Y]$ is vertical and hence $\mathscr{J}[X,Y]=0$. So, for $X$ vertical and $Y$ projectable, $\nabla_X\mathscr{J}(Y)=0$. Now, just note that
$U^\mathfrak{v}=\mathscr{J}(Y)$ for any lift $Y$ of $U$.
\end{proof}

We leave to $\S$\ref{liftssub} the proof of the following.

\begin{lemma}\label{lemmageo}
A regular curve $\lambda:I\rightarrow M$ is a geodesic of $S$ if, and only if, $\frac{D^{\dot{\lambda}}\dot{\lambda}}{dt}=0$ for all $t$.
\end{lemma}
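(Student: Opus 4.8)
The plan is to express the geodesic condition for $S$ intrinsically and then show it is equivalent to $D^{\dot\lambda}\dot\lambda/dt=0$. First I would recall that, by definition, a regular curve $\lambda$ is a geodesic of $S$ precisely when $t\mapsto\dot\lambda(t)$ is an integral curve of $S$, i.e. $\dot{\overline\lambda}(t)=S(\dot\lambda(t))$, where $\overline\lambda(t)=\dot\lambda(t)$ is the canonical lift of $\lambda$ to $TM\backslash 0$. Since $\Gamma_S$ makes $S$ horizontal and $\mathscr{J}(S)=C$, I would note that $\overline\lambda$ being the integral curve of $S$ is the same as saying that $\overline\lambda$ coincides with the horizontal lift of $\lambda$ through $\dot\lambda(0)$ \emph{and} that $\overline\lambda$ is the canonical lift $\dot\lambda$; the geodesic equation is exactly the statement that the acceleration $\dot{\overline\lambda}$ is horizontal.

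Next I would unwind the right-hand side $D^{\dot\lambda}\dot\lambda/dt$ using (\ref{covariantderivative}). Here the relevant curve is $\lambda$ itself with the nowhere-null field $W(t)=\dot\lambda(t)$, so $w=\dot\lambda(t_0)$ and the horizontal lift $\overline\lambda$ of $\lambda$ through $w$ is, by the previous paragraph's observation, the canonical lift $t\mapsto\dot\lambda(t)$ \emph{exactly when $\lambda$ is a geodesic}. The key computational input is that the vertical lift of $\dot\lambda$ along its own canonical lift is the canonical vector field restricted to that lift: $\dot\lambda^{\mathfrak v}(t)=C(\dot\lambda(t))$. Then $\frac{\nabla\dot\lambda^{\mathfrak v}}{dt}=\nabla_{\dot{\overline\lambda}}C$, and applying the lift property $\nabla_XC=\mathscr{V}X$ gives $\frac{\nabla\dot\lambda^{\mathfrak v}}{dt}=\mathscr{V}\dot{\overline\lambda}$. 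Thus
\begin{equation}\nonumber
\frac{D^{\dot\lambda}\dot\lambda}{dt}=i_w\hspace{0.01cm}^{-1}\,\mathscr{V}\dot{\overline\lambda},
\end{equation}
so the covariant acceleration measures precisely the vertical part of the acceleration of the canonical lift.

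From this identity the equivalence should fall out immediately: $D^{\dot\lambda}\dot\lambda/dt=0$ for all $t$ if and only if $\mathscr{V}\dot{\overline\lambda}(t)=0$ for all $t$, i.e. $\dot{\overline\lambda}$ is horizontal everywhere. I would then close the loop by invoking the characterization that the canonical lift $\overline\lambda=\dot\lambda$ has horizontal acceleration if and only if $\overline\lambda$ is an integral curve of $S$, which is the meaning of $\lambda$ being a geodesic of $S$. The one subtlety to be careful about is the circularity lurking in the choice of lift: in (\ref{covariantderivative}) the field $V^{\mathfrak v}$ is defined along the \emph{horizontal} lift $\overline\lambda$ through $w$, not along the canonical lift $\dot\lambda$, and these two curves agree only after one knows the geodesic equation. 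The main obstacle, then, is organizing the argument so that the identity $\frac{D^{\dot\lambda}\dot\lambda}{dt}=i_w^{-1}\mathscr{V}\dot{\overline\lambda}$ is derived validly for an arbitrary regular curve — using that along $W(\mathcal O)\subset TM\backslash0$ the horizontal lift $W^{\mathfrak h}$ coincides with $S$, exactly as exploited in the proof of Proposition \ref{propfundamental}(1) — rather than presupposing that $\lambda$ is already a geodesic; I expect this to reduce to the fact that the tangent to the canonical lift $\dot\lambda$ equals $S(\dot\lambda)$ plus a purely vertical correction term, whose vanishing is the geodesic condition.
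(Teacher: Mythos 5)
Your overall strategy is exactly the paper's: compute $D^{\dot\lambda}\dot\lambda/dt$ along the canonical lift $\overline\lambda(t)=\dot\lambda(t)$ rather than the horizontal one, note that the vertical lift of $\dot\lambda$ along $\overline\lambda$ is $C$, use the lift property $\nabla_XC=\mathscr{V}X$ to arrive at $i_w^{-1}\mathscr{V}\dot{\overline\lambda}$, and conclude that this vanishes for all $t$ precisely when $\overline\lambda$ is horizontal, i.e.\ an integral curve of $S$. You also correctly isolate the one genuine subtlety: (\ref{covariantderivative}) is defined via the horizontal lift, so the identity $D^{\dot\lambda}\dot\lambda/dt=i_w^{-1}\mathscr{V}\dot{\overline\lambda}$ needs to be established for an \emph{arbitrary} regular curve before any equivalence can be read off.

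However, your proposed route to that identity is left as an expectation, and as sketched it is missing its key ingredient. The decomposition $\dot{\overline\lambda}=S(\dot\lambda)+\mathscr{V}\dot{\overline\lambda}$ is correct, and it is also true that the horizontal lift through $w=\dot\lambda(t_0)$ has tangent $S(w)$ at $t_0$; but once you split the derivative accordingly, you are left comparing $\nabla_{S(w)}\dot\lambda^{\mathfrak{v}}$ with $\nabla_{\dot{\overline\lambda}}\dot\lambda^{\mathfrak{v}}$, and the discrepancy is a term of the form $\nabla_{\mathscr{V}\dot{\overline\lambda}}\dot\lambda^{\mathfrak{v}}$, a derivative of a vertical lift in a \emph{vertical} direction. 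Such terms do not vanish for a general lift $\nabla$ --- they are exactly what the tensor $\mathscr{C}$ of $\S$\ref{tensorsCsubsub} measures, via (\ref{eq3}) --- so no formal manipulation of the tangent decomposition can make them disappear. What kills the term here is that its second slot is effectively $S$ (since $W^{\mathfrak{h}}$ restricted to the image of $W=\dot\lambda$ equals $S$) combined with $\mathscr{C}(\cdot,S)=0$, i.e.\ Lemma \ref{lemmaC}(2), which ultimately rests on ${\rm T}(S,Y)=0$ for vertical $Y$ (Lemma \ref{lemmafundamental}(1)). This is precisely the content of formula (\ref{eq2}) of Lemma \ref{lemma100} (see also Remark \ref{remark300}): the two lifts give answers differing by $\mathcal{C}_W\bigl(D^WW/dt,V\bigr)$, which vanishes when $V=W$ because $\mathcal{C}_W(\cdot,W)=0$. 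The paper's proof is nothing but the substitution $W=V=\dot\lambda$ in (\ref{eq2}) together with this cancellation; since Lemma \ref{lemma100} is proved before Lemma \ref{lemmageo}, you could close your gap simply by citing it, but as written your plan does not supply the cancellation it needs.
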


\subsubsection{Symmetry}\label{symmetrysub}

\begin{proposition}\label{propsymmetry}
Let $W\in\mathfrak{X}(\mathcal{O})$ be nowhere null.
\begin{enumerate}
\item If {\bf (T1)} holds, then $D^W_WV-D^W_VW=[W,V]$ for all $V\in\mathfrak{X}(\mathcal{O})$.
\item If {\bf (T2)} holds, then $D^W_UV-D^W_VU=[U,V]$ for all $U,V\in\mathfrak{X}(\mathcal{O})$.
\end{enumerate}
\end{proposition}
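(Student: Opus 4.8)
The plan is to express everything in terms of the torsion tensor $\mathrm{T}$ evaluated on horizontal lifts, and to exploit the definitions of $D^W$ and the relationship between horizontal lifts and $\mathscr J$. For the statement $D^W_U V - D^W_V U = [U,V]$, the key observation is that, by Definition \ref{definaffine}, $D^W_U V = i_W^{-1}\nabla_{U^\mathfrak h}V^\mathfrak v$, where along $W(\mathcal O)$ the relevant lifts are taken with respect to $w = W(x)$. Since $V^\mathfrak v = \mathscr J(V^\mathfrak h)$ (the vertical lift of $V$ coincides with $\mathscr J$ applied to its horizontal lift, because $\mathrm d\pi(V^\mathfrak h)=V$), I can rewrite
\begin{equation}\nonumber
D^W_U V - D^W_V U = i_W^{-1}\bigl(\nabla_{U^\mathfrak h}\mathscr J(V^\mathfrak h) - \nabla_{V^\mathfrak h}\mathscr J(U^\mathfrak h)\bigr).
\end{equation}
The definition of torsion then gives $\nabla_{U^\mathfrak h}\mathscr J(V^\mathfrak h) - \nabla_{V^\mathfrak h}\mathscr J(U^\mathfrak h) = \mathrm T(U^\mathfrak h, V^\mathfrak h) + \mathscr J[U^\mathfrak h, V^\mathfrak h]$.

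For part (2), the first term vanishes precisely by hypothesis \textbf{(T2)}, since $U^\mathfrak h$ and $V^\mathfrak h$ are horizontal, so $\mathrm T(U^\mathfrak h, V^\mathfrak h) = \mathrm T(\mathscr H U^\mathfrak h, \mathscr H V^\mathfrak h) = 0$. It then remains to identify $i_W^{-1}\mathscr J[U^\mathfrak h, V^\mathfrak h]$ with $[U,V]$. Here I would use the fact that $\mathscr J[U^\mathfrak h, V^\mathfrak h] = i_W(\mathrm d\pi[U^\mathfrak h, V^\mathfrak h])$ by definition of $\mathscr J$, combined with the standard identity that the horizontal lift is $\pi$-related in the appropriate sense, giving $\mathrm d\pi[U^\mathfrak h, V^\mathfrak h] = [U,V]$ (since $U^\mathfrak h$ is $\pi$-related to $U$ and likewise for $V$). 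Thus $i_W^{-1}\mathscr J[U^\mathfrak h, V^\mathfrak h] = [U,V]$, completing part (2).

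For part (1), the computation of $D^W_W V$ must be handled differently because $W^\mathfrak h$ coincides with $S$ along $W(\mathcal O)$, as already noted in the proof of Proposition \ref{propfundamental}. So $D^W_W V - D^W_V W = i_W^{-1}(\mathrm T(S, V^\mathfrak h) + \mathscr J[S, V^\mathfrak h])$, and now the torsion term vanishes by \textbf{(T1)} rather than \textbf{(T2)}. The remaining term $i_W^{-1}\mathscr J[S, V^\mathfrak h]$ is again identified with $[W,V]$ via $\mathrm d\pi[S, V^\mathfrak h] = [W,V]$, using that $S$ restricted to $W(\mathcal O)$ projects to $W$ and $V^\mathfrak h$ projects to $V$.

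The main obstacle I anticipate is the careful bookkeeping around the $\pi$-relatedness of the lifts and the point-dependence built into Definition \ref{definaffine}: the vertical lift $i_W^{-1}$ and the horizontal lifts are all taken relative to the section $W$, so one must verify that evaluating $\mathrm d\pi$ of the relevant brackets at points of $W(\mathcal O)$ indeed reproduces the brackets $[U,V]$ and $[W,V]$ downstairs, rather than picking up correction terms from the variation of the base point. Establishing the clean identity $\mathrm d\pi[X^\mathfrak h, Y^\mathfrak h] = [X,Y]$ (valid because $X^\mathfrak h$ is $\pi$-related to $X$) is the technical heart; once that is in place, both parts follow by matching the torsion hypothesis to the appropriate pair of arguments.
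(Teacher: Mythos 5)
Your treatment of part (2) is correct and is exactly the paper's argument: write $U^\mathfrak{v}=\mathscr{J}(U^\mathfrak{h})$, $V^\mathfrak{v}=\mathscr{J}(V^\mathfrak{h})$, recognize the torsion plus $\mathscr{J}$ of a bracket, kill the torsion with \textbf{(T2)}, and use that $U^\mathfrak{h},V^\mathfrak{h}$ are $\pi$-related to $U,V$ to identify $i_W^{-1}\mathscr{J}[U^\mathfrak{h},V^\mathfrak{h}]=[U,V]$.

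Part (1), however, has a genuine gap: you substitute $S$ for $W^\mathfrak{h}$ in slots that are not tensorial. The equality $W^\mathfrak{h}=S$ holds only at points of the submanifold $W(\mathcal{O})$, so it may be used inside $\mathrm{T}$ (whose value at a point depends only on the vectors at that point), but not inside the Lie bracket $[\,\cdot\,,V^\mathfrak{h}]$ nor in the section slot of $\nabla$. Concretely, two steps fail. First, your starting identity $D^W_WV-D^W_VW=i_W^{-1}\bigl(\mathrm{T}(S,V^\mathfrak{h})+\mathscr{J}[S,V^\mathfrak{h}]\bigr)$ is false: since $\mathscr{J}(S)=C$ and $\nabla_{V^\mathfrak{h}}C=\mathscr{V}V^\mathfrak{h}=0$, expanding the torsion gives $\mathrm{T}(S,V^\mathfrak{h})+\mathscr{J}[S,V^\mathfrak{h}]=\nabla_SV^\mathfrak{v}$, so your right-hand side equals $D^W_WV$ alone; the term $D^W_VW=i_W^{-1}\nabla_{V^\mathfrak{h}}W^\mathfrak{v}$ was destroyed, because $\mathscr{J}(W^\mathfrak{h})=W^\mathfrak{v}$ and $\mathscr{J}(S)=C$ are different sections even though they agree along $W(\mathcal{O})$. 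Second, $\mathrm{d}\pi[S,V^\mathfrak{h}]=[W,V]$ is false: $S$ is not $\pi$-related to $W$ (one has $\mathrm{d}\pi(S(w))=w$, which equals $W(\pi(w))$ only on $W(\mathcal{O})$), and $\pi$-relatedness on an open set is precisely what the bracket identity requires. Indeed, under \textbf{(T1)} the computation above shows $i_W^{-1}\mathscr{J}[S,V^\mathfrak{h}]=D^W_WV$, not $[W,V]$; for the flat spray on $\mathbb{R}^n$, $\mathrm{d}\pi[S,V^\mathfrak{h}]$ at $(x,W(x))$ is the directional derivative of $V$ along $W(x)$, which differs from $[W,V]$ whenever $D^W_VW\neq 0$. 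Your two incorrect steps happen to cancel and produce the true statement, but neither is valid. The fix is the paper's route: run the part-(2) computation with $U=W$, keeping $W^\mathfrak{h}$ throughout, to get $D^W_WV-D^W_VW=i_W^{-1}\mathrm{T}(W^\mathfrak{h},V^\mathfrak{h})+[W,V]$, and only then use $W^\mathfrak{h}|_{W(\mathcal{O})}=S|_{W(\mathcal{O})}$ inside the tensor $\mathrm{T}$, where tensoriality makes the substitution legitimate, so that \textbf{(T1)} gives $\mathrm{T}(W^\mathfrak{h},V^\mathfrak{h})|_{W(\mathcal{O})}=\mathrm{T}(S,V^\mathfrak{h})|_{W(\mathcal{O})}=0$.
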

\begin{proof}
As $U^\mathfrak{v}=\mathscr{J}(U^\mathfrak{h})$ and $V^\mathfrak{v}=\mathscr{J}(V^\mathfrak{h})$, then
$D^W_UV-D^W_VU=i_W\hspace{0.01cm}^{-1}\bigl(\nabla_{U^\mathfrak{h}}\mathscr{J}(V^\mathfrak{h})-\nabla_{V^\mathfrak{h}}\mathscr{J}(U^\mathfrak{h})\bigr)
=i_W\hspace{0.01cm}^{-1}{\rm T}(U^\mathfrak{h},V^\mathfrak{h})+i_W\hspace{0.01cm}^{-1}\mathscr{J}[U^\mathfrak{h},V^\mathfrak{h}]$.
But ${\rm d}\pi[U^\mathfrak{h},V^\mathfrak{h}]=[U,V]$, hence $i_W\hspace{0.01cm}^{-1}\mathscr{J}[U^\mathfrak{h},V^\mathfrak{h}]=[U,V]$. To conclude, just note
that, along $W(\mathcal{O})\subset TM\backslash 0$, $W^\mathfrak{h}=S$.
\end{proof}
\noindent As usual, the above symmetry properties of $D^W$ implies the following symmetry between covariant derivatives
\begin{equation}\label{symmetryeq}
\frac{D^TT}{ds}=\frac{D^TU}{dt}
\end{equation}
{\it whenever {\bf (T1)} holds}; here, $T=\partial H/\partial t$ and $U=\partial H/\partial s$ are the coordinate vector fields along a a smooth variation
$H:(-\varepsilon,\varepsilon)\times I\rightarrow M$ of regular curves (i.e. each curve $t\mapsto H(s,t)$ is regular), and $D^T/dt$ and $D^T/ds$
are, respectively, the covariant derivatives along the curves $t\mapsto H(s,t)$ and $s\mapsto H(s,t)$,
corresponding to $T$.

\subsubsection{Working with non-horizontal lifts}\label{liftssub}
Given $W,U\in\mathfrak{X}(\mathcal{O})$ (resp., $W,U\in\mathfrak{X}(\lambda)$), with $W$ nowhere null, ${\rm d}W(U):\mathcal{O}\rightarrow T(TM\backslash 0)$
(resp., $W:I\rightarrow TM\backslash 0$) is a natural lift of $U$ (resp., of $\lambda$) defined along, and tangent to, the submanifold $W(\mathcal{O})\subset TM\backslash 0$.
It will be desirable to use these lifts, instead of the horizontal ones, to compute $D^W_UV$ and $D^W V/dt$.

\begin{lemma}\label{lemma100}
Within the above notation, we have (recall $\S$\ref{tensorsCsubsub})
\begin{eqnarray}
D^W_UV & = & i_W\hspace{0.01cm}^{-1}\nabla_{{\rm d}W(U)}V^\mathfrak{v}-\mathcal{C}_W(D^W_U W,V).\label{eq1}\\
\frac{D^W V}{dt} & = & i_W\hspace{0.01cm}^{-1}\frac{\nabla V^\mathfrak{v}}{dt}-\mathcal{C}_W\Bigl(\frac{D^W W}{dt},V\Bigr)\label{eq2}
\end{eqnarray}
In particular, as $\mathcal{C}_W(\cdot,W)=0$, then $D^W_U W=i_W\hspace{0.01cm}^{-1}\nabla_{{\rm d}W(U)}W^\mathfrak{v}$.
(In (\ref{eq2}), $V^\mathfrak{v}$ stands for the vertical lift of $V\in\mathfrak{X}(\lambda)$ along the curve $W:I\rightarrow TM\backslash 0$.)
\end{lemma}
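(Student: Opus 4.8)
The goal is to compute the covariant derivative $D^W$ using the natural lift ${\rm d}W(U)$ in place of the horizontal lift $U^{\mathfrak{h}}$. The key observation is that at a point $w=W(x)$ of the image submanifold $W(\mathcal{O})$, the two lifts of $u={\rm d}\pi_w({\rm d}W(U))$ differ by a vertical vector. The plan is to decompose ${\rm d}W(U)$ via the splitting (\ref{decomposition}) as ${\rm d}W(U)=\mathscr{H}\,{\rm d}W(U)+\mathscr{V}\,{\rm d}W(U)$, and to identify each piece. The horizontal part $\mathscr{H}\,{\rm d}W(U)$ projects to $u$ under ${\rm d}\pi$ and hence equals $u^{\mathfrak{h}}$, since the horizontal lift is characterized by ${\rm d}\pi(U^{\mathfrak{h}})=U$. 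Therefore ${\rm d}W(U)=U^{\mathfrak{h}}+\mathscr{V}\,{\rm d}W(U)$, and by $\mathbb{R}$-linearity of $\nabla$ in its direction argument,
\begin{equation}\nonumber
\nabla_{{\rm d}W(U)}V^{\mathfrak{v}}=\nabla_{U^{\mathfrak{h}}}V^{\mathfrak{v}}+\nabla_{\mathscr{V}\,{\rm d}W(U)}V^{\mathfrak{v}}.
\end{equation}
Applying $i_W^{-1}$ to the first term returns exactly $D^W_U V$ by Definition \ref{definaffine}, so (\ref{eq1}) will follow once I show that the correction term $i_W^{-1}\nabla_{\mathscr{V}\,{\rm d}W(U)}V^{\mathfrak{v}}$ equals $\mathcal{C}_W(D^W_U W,V)$.

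The crux is therefore to evaluate the vertical direction $\mathscr{V}\,{\rm d}W(U)$ and to feed it into the tensor $\mathscr{C}$. First I would note that any vertical vector can be written as $\mathscr{J}$ of something, and compute $i_W^{-1}\mathscr{V}\,{\rm d}W(U)$ directly: since $\mathscr{J}({\rm d}W(U))=i_W({\rm d}\pi({\rm d}W(U)))=i_W(U)=U^{\mathfrak{v}}$ is independent of the vertical part, I instead use the defining property of a lift of $\Gamma_S$, namely $\nabla_X C=\mathscr{V}X$, to recognize $\mathscr{V}\,{\rm d}W(U)=\nabla_{{\rm d}W(U)}C$; then $i_W^{-1}$ of this is precisely $D^W_U W$, because $W^{\mathfrak{v}}=C$ along $W(\mathcal{O})$ (indeed $C(W(x))=i_{W(x)}(W(x))$) and $D^W_U W=i_W^{-1}\nabla_{{\rm d}W(U)}W^{\mathfrak{v}}$ once the horizontal identity is in place. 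So the vertical part $\mathscr{V}\,{\rm d}W(U)$ equals $i_W(D^W_U W)$, i.e. it is the vertical lift of the vector $D^W_U W$. Writing this vertical vector as $\mathscr{J}(Z)$ for a suitable lift $Z$ of $D^W_U W$, the correction term becomes $i_W^{-1}\nabla_{\mathscr{J}(Z)}V^{\mathfrak{v}}=i_W^{-1}{\rm T}(\mathscr{J}(Z),V^{\mathfrak{h}})=i_W^{-1}\mathscr{C}(Z,V^{\mathfrak{h}})=\mathcal{C}_W(D^W_U W,V)$, using semi-basicity from Lemma \ref{lemmaC}(1) to kill the $\mathscr{J}[\mathscr{J}(Z),V^{\mathfrak{h}}]$ term together with the definitions of $\mathscr{C}$ and $\mathcal{C}$.

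I expect the main obstacle to be the bookkeeping that shows $\mathscr{V}\,{\rm d}W(U)$ is genuinely $i_W(D^W_U W)$ and, relatedly, verifying that the correction term depends on ${\rm d}W(U)$ only through its $\mathscr{V}$-part (so that only $D^W_U W$, not the full jet of $W$, appears): this is where the lift property $\nabla_\cdot C=\mathscr{V}(\cdot)$ and the semi-basicity of $\mathscr{C}$ must be used in tandem. The moving-frame version (\ref{eq2}) then follows by the same argument run along the curve $\lambda$, reading ${\rm d}W(U)$ as the tangent $\dot{W}:I\to TM\backslash 0$ and $V^{\mathfrak{v}}$ as the vertical lift of $V\in\mathfrak{X}(\lambda)$ along $W$; the tensorial character of $\mathcal{C}$ guarantees the pointwise formula transports verbatim. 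The final assertion $D^W_U W=i_W^{-1}\nabla_{{\rm d}W(U)}W^{\mathfrak{v}}$ is then immediate by setting $V=W$ in (\ref{eq1}) and invoking $\mathcal{C}_W(\cdot,W)=0$ from Lemma \ref{lemmaC}(2).
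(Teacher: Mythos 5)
Your overall strategy---decompose ${\rm d}W(U)=U^{\mathfrak{h}}+\mathscr{V}{\rm d}W(U)$, identify the vertical part with $i_W(D^W_UW)$, and evaluate the correction term through the semi-basic formula $\nabla_{\mathscr{J}(Z)}V^{\mathfrak{v}}=\mathscr{C}(Z,V^{\mathfrak{h}})$---is the same as the paper's, but your proof of the key identification $i_W^{-1}\mathscr{V}{\rm d}W(U)=D^W_UW$ is circular as written. You justify it by the identity $D^W_UW=i_W^{-1}\nabla_{{\rm d}W(U)}W^{\mathfrak{v}}$, invoked ``once the horizontal identity is in place''; but that identity is not definitional (Definition \ref{definaffine} uses the horizontal lift $U^{\mathfrak{h}}$, not ${\rm d}W(U)$)---it is precisely the final assertion of the lemma, which you then deduce at the very end by setting $V=W$ in (\ref{eq1}). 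So in your write-up (\ref{eq1}) rests on the final assertion, and the final assertion rests on (\ref{eq1}).

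The circle is broken by an ingredient that never appears in your proposal: along $W(\mathcal{O})$ one has $W^{\mathfrak{h}}=S$, so that $\mathscr{C}(\cdot,W^{\mathfrak{h}})=\mathscr{C}(\cdot,S)=0$ by Lemma \ref{lemmaC}(2). Concretely, run your own correction-term computation with $V=W$ \emph{before} identifying the vertical part: writing $\mathscr{V}{\rm d}W(U)=\mathscr{J}(\hat{X})$, the decomposition gives
\begin{equation}\nonumber
\nabla_{{\rm d}W(U)}W^{\mathfrak{v}}=\nabla_{U^{\mathfrak{h}}}W^{\mathfrak{v}}+\nabla_{\mathscr{J}(\hat{X})}W^{\mathfrak{v}}
=i_W D^W_UW+\mathscr{C}(\hat{X},W^{\mathfrak{h}})=i_W D^W_UW,
\end{equation}
the last equality because $W^{\mathfrak{h}}|_{W(\mathcal{O})}=S|_{W(\mathcal{O})}$ and $\mathscr{C}(\cdot,S)=0$. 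Combining this with your (correct) observation $\nabla_{{\rm d}W(U)}W^{\mathfrak{v}}=\nabla_{{\rm d}W(U)}C=\mathscr{V}{\rm d}W(U)$ yields $i_W^{-1}\mathscr{V}{\rm d}W(U)=D^W_UW$ non-circularly, and the rest of your argument then goes through; this is exactly how the paper closes the loop. A small side remark: what kills $\mathscr{J}[\mathscr{J}(Z),V^{\mathfrak{h}}]$ is not the semi-basicity of $\mathscr{C}$ but the fact that $[\mathscr{J}(Z),V^{\mathfrak{h}}]$ is vertical when $V^{\mathfrak{h}}$ is projectable (together with $\mathscr{J}$ vanishing on vertical vectors); semi-basicity is what makes $\mathcal{C}_W$ well defined independently of the chosen lifts.
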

\begin{proof}
We will only consider (\ref{eq1}), (\ref{eq2}) being analog.
Since $[\mathscr{J}(X),Y]$ is vertical if $Y$ is projectable, we obtain from (\ref{eq3}) that $\nabla_{\mathscr{J}(X)}\mathscr{J}(Y)=\mathscr{C}(X,Y)$ whenever $Y$ is projectable. It follows that
$\nabla_{\mathscr{V}X}V^\mathfrak{v}=\mathscr{C}(\hat{X},V^\mathfrak{h})$ for all $\hat{X}$ such that $\mathscr{J}(\hat{X})=\mathscr{V}X$. Applying this to $X={\rm d}W(U)$, and
noting that $\mathscr{H}{\rm d}W(U)=U^\mathfrak{h}$, we obtain $\nabla_{{\rm d}W(U)}V^\mathfrak{v} = \nabla_{\mathscr{H}{\rm d}W(U)}V^\mathfrak{v} + \nabla_{\mathscr{V}{\rm d}W(U)}V^\mathfrak{v}=
i_WD^W_UV + \mathscr{C}(\hat{X},V^\mathfrak{h})$. Now, from $\mathscr{J}(\hat{X})=\mathscr{V}{\rm d}W(U)$ we obtain ${\rm d}\pi(\hat{X})=i_W\hspace{0.01cm}^{-1}\mathscr{V}{\rm d}W(U)$ and, therefore,
$\mathscr{C}(\hat{X},V^\mathfrak{h})=i_W\mathcal{C}_W(i_W\hspace{0.01cm}^{-1}\mathscr{V}{\rm d}W(U),V)$. It remains to show that
\begin{equation}\label{eq500}
i_W\hspace{0.01cm}^{-1}\mathscr{V}{\rm d}W(U)=D^W_UW.
\end{equation}
For such, let $V=W$ in $\nabla_{{\rm d}W(U)}V^\mathfrak{v} = i_WD^W_UV + \mathscr{C}(\hat{X},V^\mathfrak{h})$ and observe that $\nabla_{{\rm d}W(U)}W^\mathfrak{v}=\mathscr{V}{\rm d}W(U)$, since
$W^\mathfrak{v}|_{W(\mathcal{O})}=C|_{W(\mathcal{O})}$, and $\mathscr{C}(\hat{X},W^\mathfrak{h})=0$ since $W^\mathfrak{h}|_{W(\mathcal{O})}=S|_{W(\mathcal{O})}$.
\end{proof}
\begin{remark}\label{remark300}
For future reference, we remark that, analogously to (\ref{eq500}), we have $D^WW/dt={i_W}^{-1}\mathscr{V}dW/dt$, for any nowhere null $W\in\mathfrak{X}(\lambda)$.
\end{remark}

\begin{proof}[Proof of Lemma \ref{lemmageo}]
The vertical lift $\dot{\lambda}^\mathfrak{v}$ of $\dot{\lambda}$ along the curve $t\mapsto\dot{\lambda}(t)$ coincides with $C$, so
substituting $W(t)=V(t)=\dot{\lambda}(t)$ in (\ref{eq2}), and recalling that $\mathcal{C}_{\dot{\lambda}}(\cdot,\dot{\lambda})=0$, we obtain $D^{\dot{\lambda}}\dot{\lambda}/dt=
i_{\dot{\lambda}}\hspace{0.01cm}^{-1}\nabla\dot{\lambda}^{\frak{v}}/dt=i_{\dot{\lambda}}\hspace{0.01cm}^{-1}\nabla C/dt$ which is null if, and only if, the curve $t\mapsto\dot{\lambda}(t)$
is horizontal, which in turn is equivalent to it being an integral line of $S$.

\end{proof}

\subsubsection{The curvature endomorphisms of the affine connections}\label{curvatureendomsub}
We can now prove the main result of $\S$2, which shows how one can compute the curvature endomorphisms, defined in $\S$\ref{subsubcurvspray}, by means of
the covariant derivatives.

\begin{theorem}\label{theoremcurvature}
Suppose that {\bf (T1)} holds. Let $H:(-\varepsilon,\varepsilon)\times I\rightarrow M$ be a smooth variation of a curve $\lambda(t)=H(0,t)$ through regular curves (i.e.,
each $t\mapsto H(s,t)$ is regular). Following the same notation as $\S$\ref{symmetrysub}, if $\lambda$ is a geodesic then, at any $(0,t)$,
\begin{equation}\nonumber
\frac{D^T}{ds}\frac{D^TT}{dt}-\frac{D^T}{dt}\frac{D^TT}{ds}={\bf R}_T(U).
\end{equation}
In particular, if the variation is through geodesics, the variational vector field $J(t)=U(0,t)$ will satisfy the \textsl{Jacobi equation}
\begin{equation}\nonumber
\frac{{D^{\dot{\lambda}}}^2J}{dt^2}+{\bf R}_{\dot{\lambda}}(J)=0
\end{equation}
\end{theorem}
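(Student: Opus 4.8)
The plan is to translate the curvature endomorphism $\mathbf{R}_w$, which is defined in $\S\ref{subsubcurvspray}$ in terms of the curvature $\mathcal{R}$ of the linear connection $\nabla$ acting on vertical lifts, into the language of the covariant derivatives $D^T/dt$ and $D^T/ds$ along the variation $H$. The natural route is to compute both sides along the lifted surface $T=\partial H/\partial t:(-\varepsilon,\varepsilon)\times I\to TM\backslash 0$, which plays the role that $W$ played in $\S\ref{liftssub}$, and to use the formula (\ref{eq2}) of Lemma \ref{lemma100} (together with Remark \ref{remark300}) to express each $D^T/dt$ and $D^T/ds$ through the ambient connection $\nabla$ applied to vertical lifts along $T$.

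\emph{First} I would fix the surface map $T:(s,t)\mapsto\partial H/\partial t(s,t)$ into $TM\backslash 0$ and introduce the two coordinate vector fields $\partial_t,\partial_s$ on the parameter rectangle, whose pushforwards $\mathrm{d}T(\partial_t),\mathrm{d}T(\partial_s)$ are the natural (non-horizontal) lifts of the coordinate fields along the $t$- and $s$-curves. \emph{Next}, using (\ref{eq2}) I would write, for a field $V$ along $T$,
\begin{equation}\nonumber
\frac{D^T V}{dt}=i_T\hspace{0.01cm}^{-1}\frac{\nabla V^\mathfrak{v}}{dt}-\mathcal{C}_T\Bigl(\frac{D^T T}{dt},V\Bigr),
\qquad
\frac{D^T V}{ds}=i_T\hspace{0.01cm}^{-1}\frac{\nabla V^\mathfrak{v}}{ds}-\mathcal{C}_T\Bigl(\frac{D^T T}{ds},V\Bigr).
\end{equation}
Since $T=\partial H/\partial t$ restricts to $S$ along the geodesic $\lambda$ (because $t\mapsto\dot\lambda(t)$ is an integral curve of $S$), the correction terms involving $\mathcal{C}_T$ are governed by $\mathcal{C}_T(\cdot,T)=0$ from Lemma \ref{lemmaC}; and along $\lambda$ the vertical lift $T^\mathfrak{v}$ of $T$ coincides with $C$, so that $\mathrm{d}T(\partial_t)$ restricts to $S$. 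The point is that when $V=T$ the $\mathcal{C}$-terms drop, so the two inner covariant derivatives $D^TT/dt$ and $D^TT/ds$ become, via Remark \ref{remark300}, purely $i_T^{-1}\mathscr{V}$ of the corresponding ambient derivatives of $C$.

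\emph{Then} I would iterate: compute $\tfrac{D^T}{ds}\tfrac{D^TT}{dt}-\tfrac{D^T}{dt}\tfrac{D^TT}{ds}$ by substituting the first-order formulas into the second application of (\ref{eq2}). After passing to $i_T^{-1}$ and collecting terms, the antisymmetrized double derivative should assemble into the ambient commutator $[\nabla_{\mathrm{d}T(\partial_s)},\nabla_{\mathrm{d}T(\partial_t)}]C$ together with a $\nabla_{[\,\cdot\,,\,\cdot\,]}C$ term, i.e. exactly $-\mathcal{R}(\mathrm{d}T(\partial_t),\mathrm{d}T(\partial_s))C$ up to sign, which is the definition of $\mathcal{R}$ applied to the lifts. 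Restricting to $(0,t)$, where $\mathrm{d}T(\partial_t)=S$ and $\mathrm{d}T(\partial_s)$ is a lift of $U$, I would invoke Corollary \ref{corollarycurvendospray}: under {\bf (T1)}, for \emph{any} lift $X$ of $u$ at $w$ one has $\mathbf{R}_w(u)=i_w^{-1}\mathcal{R}(S,X)C$. This is precisely what identifies the assembled expression with $\mathbf{R}_T(U)$, and it is the step where hypothesis {\bf (T1)} is essential. The \emph{Jacobi equation} then follows as the special case where the variation is through geodesics: each $t\mapsto H(s,t)$ being a geodesic gives $D^TT/dt=0$ identically, so the term $\tfrac{D^T}{ds}\tfrac{D^TT}{dt}$ vanishes, while the symmetry (\ref{symmetryeq}), valid under {\bf (T1)}, lets me rewrite $\tfrac{D^T}{dt}\tfrac{D^TT}{ds}$ as $\tfrac{D^{\dot\lambda}}{dt}\tfrac{D^{\dot\lambda}}{dt}J$ after setting $s=0$ and using $U(0,t)=J(t)$.

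\emph{The main obstacle} I anticipate is the bookkeeping in the iterated step: the lifts $\mathrm{d}T(\partial_t),\mathrm{d}T(\partial_s)$ are not horizontal, so a priori each single covariant derivative carries a nonzero $\mathcal{C}_T$-correction, and I must verify that these corrections either cancel in the antisymmetrization or vanish on restriction to $\lambda$ because $V=T$ reduces to $C$ and $\mathcal{C}_T(\cdot,T)=0$. A secondary subtlety is justifying that the bracket $[\mathrm{d}T(\partial_t),\mathrm{d}T(\partial_s)]$ produces the correct $\nabla_{[\,\cdot\,,\,\cdot\,]}C$ contribution matching the definition of $\mathcal{R}$; since $\partial_t,\partial_s$ commute on the parameter rectangle, $[\mathrm{d}T(\partial_t),\mathrm{d}T(\partial_s)]=0$, which should make this term vanish and streamline the identification. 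Handling these terms carefully—and confirming that everything is being evaluated correctly at $(0,t)$ so that $\mathrm{d}T(\partial_t)=S$—is where the real care lies, with the appeal to Corollary \ref{corollarycurvendospray} providing the decisive invariance that closes the argument.
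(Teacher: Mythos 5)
Your proposal is correct, and its core is the same as the paper's: reduce the affine covariant derivatives to the ambient connection $\nabla$ applied to $C$ via the non-horizontal-lift formula (\ref{eq2}) of Lemma \ref{lemma100} and Remark \ref{remark300}, observe that the $\mathcal{C}_T$-corrections vanish at $s=0$ because $D^TT/dt=0$ along the geodesic (and by bilinearity, so that antisymmetrization is not even needed for this), and then close with the decisive invariance (\ref{remarkcurvature}) of Corollary \ref{corollarycurvendospray}, which is exactly where {\bf (T1)} enters in the paper as well. The difference is one of packaging: the paper does not prove Theorem \ref{theoremcurvature} directly, but instead proves the abstract Proposition \ref{propositioncurvature} for vector fields $W,U$ on an open set, using the version (\ref{eq1}) of Lemma \ref{lemma100} together with the bracket term $D^W_{[W,U]}W$, and then asserts the theorem ``follows easily.'' You work directly along the two-parameter map $T=\partial H/\partial t$, where the commuting coordinate fields $\partial_s,\partial_t$ eliminate the bracket contribution. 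What your route buys is that it genuinely proves the statement as formulated --- for fields along the variation, with no need to extend $T$ and $U$ to vector fields on open subsets of $M$ (which is what invoking the Proposition would implicitly require, and is delicate if the variation is not an immersion); what it costs is an appeal to the standard commutation formula $\frac{\nabla}{ds}\frac{\nabla}{dt}\sigma-\frac{\nabla}{dt}\frac{\nabla}{ds}\sigma=\mathcal{R}\bigl(\mathrm{d}T(\partial_t),\mathrm{d}T(\partial_s)\bigr)\sigma$ for the pullback of a linear connection along a two-parameter map, which the paper never states but which holds for any linear connection on a vector bundle and is consistent with the paper's sign convention for $\mathcal{R}$; you should make that verification explicit rather than leave it at ``up to sign.'' The derivation of the Jacobi equation via Lemma \ref{lemmageo} and the symmetry (\ref{symmetryeq}) is exactly as intended.
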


Instead of proving this theorem, we will prove the following somewhat more abstract version from which Theorem \ref{theoremcurvature} follows easily.

\begin{proposition}\label{propositioncurvature}
Suppose that {\bf (T1)} holds. Let $W\in\mathfrak{X}(\mathcal{O})$, nowhere null, be such that its integral curve through some point $x_0$ is a geodesic.
Then, at $x_0$, the curvature endomorphism in the direction $W$ of the affine connection $D^W$ coincides with ${\bf R}_W$; that is, given
$U\in\mathfrak{X}(\mathcal{O})$, then, at $x_0$,
\begin{equation}\nonumber
D^W_UD^W_WW-D^W_WD^W_UW+D^W_{[W,U]}W={\bf R}_W(U)
\end{equation}
\end{proposition}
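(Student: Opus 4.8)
The plan is to recognize the left-hand side as the ordinary curvature $R^{D^W}(U,W)W=D^W_UD^W_WW-D^W_WD^W_UW-D^W_{[U,W]}W$ of the affine connection $D^W$ (note $D^W_{[W,U]}W=-D^W_{[U,W]}W$), and to reduce it to the curvature $\mathcal{R}$ of $\nabla$ by means of the non-horizontal lifts of $\S$\ref{liftssub}. To this end I would introduce the auxiliary connection $\widetilde{D}_UV=i_W^{-1}\nabla_{{\rm d}W(U)}V^{\mathfrak{v}}$ on $\mathcal{O}$, which is just the pullback $W^{*}\nabla$ transported to $T\mathcal{O}$ by the vertical lift $i_W$. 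By (\ref{eq1}) of Lemma \ref{lemma100} one has $D^W_UV=\widetilde{D}_UV-B(U,V)$, where $B(U,V):=\mathcal{C}_W(\widetilde{D}_UW,V)$; since $\widetilde{D}_UW=D^W_UW=i_W^{-1}\mathscr{V}{\rm d}W(U)$ is tensorial in $U$ by (\ref{eq500}), the object $B$ is a genuine $(2,1)$-tensor field, so $D^W$ and $\widetilde{D}$ differ by a tensor.

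First I would record the vanishings supplied by the hypotheses at the geodesic point $x_0$. Lemma \ref{lemmaC}(2) gives $\mathcal{C}_W(\cdot,W)=0$, hence $B(\cdot,W)\equiv 0$; in particular $D^W_WW=\widetilde{D}_WW$ and $D^W_UW=\widetilde{D}_UW$ as vector fields near $x_0$, and $D^W_{[U,W]}W=\widetilde{D}_{[U,W]}W$. Because the integral curve of $W$ through $x_0$ is a geodesic, Lemma \ref{lemmageo} yields $\widetilde{D}_WW=D^W_WW=0$ at $x_0$, whence $B(W,\cdot)=\mathcal{C}_W(\widetilde{D}_WW,\cdot)=0$ there. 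Substituting $D^W=\widetilde{D}-B$ into the three terms and using these facts (together with the tensoriality of $B$ in its second slot, which makes $B(U,\widetilde{D}_WW)$ vanish at $x_0$ since $\widetilde{D}_WW$ does), every contribution of $B$ drops out. Thus at $x_0$ one is left with $R^{D^W}(U,W)W=\widetilde{D}_U\widetilde{D}_WW-\widetilde{D}_W\widetilde{D}_UW-\widetilde{D}_{[U,W]}W=R^{\widetilde{D}}(U,W)W$.

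It then remains to compute $R^{\widetilde{D}}(U,W)W$. As $\widetilde{D}$ is the pullback of $\nabla$, its curvature is the pullback of that of $\nabla$; recalling that $W^{\mathfrak{v}}$ coincides with $C$ along $W(\mathcal{O})$, this reads $R^{\widetilde{D}}(U,W)W=i_W^{-1}\mathcal{R}({\rm d}W(W),{\rm d}W(U))C$ at $w_0=W(x_0)$, once the sign conventions for $\mathcal{R}$ are matched. Since $\mathcal{R}$ is tensorial in its two arguments I may replace them by their values at $w_0$: the geodesic condition gives ${\rm d}W(W)|_{w_0}=S(w_0)$, while ${\rm d}W(U)=U^{\mathfrak{h}}+\mathscr{V}{\rm d}W(U)$. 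Because {\bf (T1)} holds, Lemma \ref{lemmacurvatura}(2) annihilates the vertical summand, i.e. $\mathcal{R}(S,\mathscr{V}{\rm d}W(U))C=0$. Hence $R^{\widetilde{D}}(U,W)W=i_W^{-1}\mathcal{R}(S,U^{\mathfrak{h}})C=\mathbf{R}_W(U)$, which is the claimed identity.

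The main obstacle is the bookkeeping in passing from $D^W$ to $\widetilde{D}$: one must check that $B$ is genuinely tensorial and that the geodesic hypothesis is exactly what forces both $\widetilde{D}_WW$ and $B(W,\cdot)$ to vanish at $x_0$, so that precisely the naive pullback curvature survives. The only other delicate points are the careful matching of the sign and antisymmetry conventions between the affine curvature $R^{D^W}$ and the curvature $\mathcal{R}$ entering the definition of $\mathbf{R}_w$, and the justification of the pullback-curvature identity, which rests on the $W$-relatedness of ${\rm d}W(U),{\rm d}W(W)$ with $U,W$ and on the tensoriality of $\mathcal{R}$.
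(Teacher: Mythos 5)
Your proposal is correct and is essentially the paper's own proof in different clothing: the auxiliary connection $\widetilde{D}_UV=i_W^{-1}\nabla_{{\rm d}W(U)}V^{\mathfrak{v}}$ and difference tensor $B(U,V)=\mathcal{C}_W(D^W_UW,V)$ are exactly the objects the paper manipulates directly when it applies Lemma \ref{lemma100} twice, and in both arguments the $\mathcal{C}$-correction terms are killed at $x_0$ by the same fact, namely $(D^W_WW)(x_0)=0$ from the geodesic hypothesis. Your final step, splitting ${\rm d}W(U)=U^{\mathfrak{h}}+\mathscr{V}{\rm d}W(U)$ and invoking Lemma \ref{lemmacurvatura}(2) under {\bf (T1)}, is precisely the content of relation (\ref{remarkcurvature}) that the paper quotes after substituting ${\rm d}W(W)|_{W(x_0)}=S(W(x_0))$ and $W^{\mathfrak{v}}|_{W(\mathcal{O})}=C$.
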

\begin{proof}
To simplify the notation, let $\widetilde{W}={\rm d}W(W)$ and $\widetilde{U}={\rm d}W(U)$ be the vector fields tangent to
$W(\mathcal{O})\subset TM\backslash 0$.
According to Lemma \ref{lemma100}, along $W(\mathcal{O})$ we have $(D^W_W W\bigr)^\mathfrak{v}=\nabla_{\widetilde{W}}W^\mathfrak{v}$. Hence, applying
Lemma \ref{lemma100} again,
\begin{equation}\label{eq5}
 D^W_UD^W_W W=i_W\hspace{0.01cm}^{-1}\nabla_{\widetilde{U}}\nabla_{\widetilde{W}}W^\mathfrak{v}-\mathcal{C}_W(D^W_U W,D^W_W W).
\end{equation}
Arguing similarly,
\begin{equation}\label{eq6}
D^W_W D^W_U W = i_W\hspace{0.01cm}^{-1}\nabla_{\widetilde{W}}\nabla_{\widetilde{U}}W^\mathfrak{v}-\mathcal{C}_W(D^W_W W,D^W_UW).
\end{equation}
Also, from ${\rm d}W([W,U])=[\widetilde{W},\widetilde{U}]$ and Lemma \ref{lemma100}, we have $D^W_{[W,U]}W=i_W^{-1}\nabla_{[\widetilde{W},\widetilde{U}]}W^\mathfrak{v}$.
Together with (\ref{eq5}) and (\ref{eq6}), this gives us
\begin{eqnarray}
D^W_UD^W_WW-D^W_WD^W_UW+D^W_{[W,U]}W & = & i_W\hspace{0.01cm}^{-1}\mathcal{R}\bigl(\widetilde{W},\widetilde{U}\bigr)W^\mathfrak{v}\nonumber\\
& + & \mathcal{C}_W(D^W_W W,D^W_UW)-\mathcal{C}_W(D^W_U W,D^W_W W).\nonumber
\end{eqnarray}
Now, $W^\mathfrak{v}|_{W(\mathcal{O})}=C$, and the hypothesis on $W$ implies that, at $W(x_0)$, $\widetilde{W}=S$. Therefore, as {\bf (T1)} holds,
and ${\rm d}\pi(\widetilde{U})=U$, relation (\ref{remarkcurvature}) guaranties that, at $x_0$,
$i_W\hspace{0.01cm}^{-1}\mathcal{R}\bigl(\widetilde{W},\widetilde{U}\bigr)W^\mathfrak{v}={\bf R}_W(U)$. To conclude, just note that the hypothesis on $W$
implies $(D^W_WW)(x_0)=0$.
\end{proof}

\section{The case of a Finsler metric}\label{sectionfinsler}
We now turn back to the case where we are given a Finsler metric $F$ on $M$. As it is well-known (see \cite{marsden}), the solutions to the Euler-Lagrange equations of (\ref{energyfunctional2}) 
are the geodesics of a second order differential equation $S$ on $M$ defined via
\begin{equation}\nonumber
{\rm d}{\rm E}=\omega_F(S\hspace{0.03cm},\hspace{0.03cm}\cdot), 
\end{equation}
where $\omega_F$ is the symplectic structure on $TM\backslash 0$ induced by $F$ (see the Appendix). The homogeneity
of $F$ guarantees that $S$ is in fact a spray, the so called {\it geodesic spray} of $F$.
\par\textsl{Throughout this section, let be fixed
a lift $\nabla$ of $\Gamma_S$ satisfying condition {\bf (T1)}}.

\subsection{Metric conditions on $\nabla$}
By abuse of notation we still denote by $g$ the Riemannian metric on (\ref{verticalbundle}) induced by the fundamental tensor of $F$ with the help of the vertical lift.
We start by observing that $g$ is always parallel in the direction of $S$:
\begin{lemma}\label{lemma200}
We have $\nabla_Sg=0$.
\end{lemma}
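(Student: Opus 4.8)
The plan is to first strip away the dependence on $\nabla$. Since condition {\bf (T1)} is in force throughout this section, Lemma \ref{lemmafundamental}(2) gives $\nabla_S\mathscr{J}(Y)=\mathscr{V}[S,\mathscr{J}(Y)]$, so the operator $\nabla_S$, acting on vertical fields, is intrinsic to the spray; in particular $\nabla_S g=0$ is a statement about $S$ and the fundamental tensor alone, and it suffices to verify it. Writing an arbitrary pair of vertical fields as $V=\mathscr{J}(X)$ and $W=\mathscr{J}(Y)$ and unwinding the definition of $\nabla_S g$, the identity $\nabla_S g=0$ is equivalent to
\begin{equation}\nonumber
S\bigl(g(V,W)\bigr)=g\bigl(\mathscr{V}[S,V],W\bigr)+g\bigl(V,\mathscr{V}[S,W]\bigr).
\end{equation}

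Next I would bring in the symplectic structure. Writing $\mathcal{E}=\frac{1}{2}F^2$ for the energy function on $TM\backslash 0$, the geodesic spray is characterized by $\omega_F(S,\cdot)={\rm d}\mathcal{E}$, whence Cartan's formula and $d\omega_F=0$ give $\mathcal{L}_S\omega_F={\rm d}\bigl(\omega_F(S,\cdot)\bigr)={\rm d}{\rm d}\mathcal{E}=0$; that is, $\omega_F$ is $S$-invariant. The fundamental tensor is tied to $\omega_F$ through the almost-tangent structure by the relation $g(\mathscr{J}X,\mathscr{J}Y)=\omega_F(\mathscr{J}X,Y)$ (up to the sign convention fixed in the Appendix), which is symmetric in the sense that $\omega_F(\mathscr{J}X,Y)=\omega_F(\mathscr{J}Y,X)$. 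Differentiating along $S$ and invoking $\mathcal{L}_S\omega_F=0$ yields
\begin{equation}\nonumber
S\bigl(g(V,W)\bigr)=\omega_F\bigl([S,\mathscr{J}X],Y\bigr)+\omega_F\bigl(\mathscr{J}X,[S,Y]\bigr).
\end{equation}
In the first term I split $[S,\mathscr{J}X]=\mathscr{V}[S,\mathscr{J}X]+\mathscr{H}[S,\mathscr{J}X]$ and recognise, via the relation above, $\omega_F(\mathscr{V}[S,\mathscr{J}X],Y)=g(\mathscr{V}[S,V],W)$, which is exactly the first summand on the right-hand side of the target identity. In the second term I use the relation again together with the canonical-connection identity $\Gamma_S=-[S,\mathscr{J}]$, read as $\mathscr{J}[S,Y]=[S,\mathscr{J}Y]+\Gamma_S(Y)$, to obtain $\omega_F(\mathscr{J}X,[S,Y])=g(\mathscr{J}X,\mathscr{J}[S,Y])=g(V,\mathscr{V}[S,W])+g\bigl(\mathscr{J}X,\mathscr{V}\Gamma_S(Y)\bigr)$.

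Collecting the two terms, everything matches the desired right-hand side except for a leftover, so the proof reduces to establishing
\begin{equation}\nonumber
\omega_F\bigl(\mathscr{H}[S,\mathscr{J}X],Y\bigr)+g\bigl(\mathscr{J}X,\mathscr{V}\Gamma_S(Y)\bigr)=0.
\end{equation}
I expect this cancellation to be the main obstacle: it is precisely the point at which the special geometry of the \emph{geodesic} spray, as opposed to an arbitrary spray, must enter. The cleanest way to settle it, which also bypasses the symplectic bookkeeping, is to argue directly from the fiber-Hessian description $g(\mathscr{J}X,\mathscr{J}Y)=(\mathscr{J}X)(\mathscr{J}Y)\mathcal{E}$ (for projectable $X,Y$) and to use the two structural facts $S\mathcal{E}=0$, i.e. conservation of energy along the spray, and $X\mathcal{E}=0$ for every horizontal $X$, i.e. horizontal constancy of $\mathcal{E}$ for the canonical connection $\Gamma_S$; these make the horizontal cross-terms vanish and reduce the whole computation to the vertical Hessian, where the identity becomes a routine consequence of Euler's relation $(i_w a)\mathcal{E}=g_w(w,a)$. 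Either route isolates the content of the lemma in the compatibility of $\omega_F$, $\mathscr{J}$ and the conservation of energy.
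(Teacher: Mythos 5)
Your opening reduction is exactly the paper's: under {\bf (T1)}, Lemma \ref{lemmafundamental}(2) makes $\nabla_S$ on vertical fields, and hence $\nabla_Sg$, independent of the choice of lift. But from there the paper finishes in one line --- it takes $\nabla$ to be the Cartan or Chern--Rund connection of $\S$\ref{sectionclassicalconnections}, whose metric axiom ({\bf (M6)}, resp.\ {\bf (M3)} together with $\mathscr{V}S=0$) gives $\nabla_Sg=0$ outright --- whereas you attempt a direct symplectic verification. That route is viable, but your write-up stops short of a proof at precisely the point you flag: the cancellation $\omega_F\bigl(\mathscr{H}[S,\mathscr{J}X],Y\bigr)+g\bigl(\mathscr{J}X,\mathscr{V}\Gamma_S(Y)\bigr)=0$ is left as an expectation, and your diagnosis of what it requires is off. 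No further input from the geodesic spray is needed there; the cancellation is formal, given two facts already at your disposal. With your sign convention, the Appendix description of $\omega_F$ reads $\omega_F(A,B)=g(\mathscr{V}A,\mathscr{J}B)-g(\mathscr{J}A,\mathscr{V}B)$, and since $\mathscr{J}\mathscr{H}=\mathscr{J}$,
\begin{equation}\nonumber
\omega_F\bigl(\mathscr{H}[S,\mathscr{J}X],Y\bigr)=-g\bigl(\mathscr{J}[S,\mathscr{J}X],\mathscr{V}Y\bigr)=g\bigl(\mathscr{J}X,\mathscr{V}Y\bigr),
\end{equation}
where the last equality is Lemma \ref{lemmaspray} applied to the vertical field $\mathscr{J}X$ (it gives $\mathscr{J}[S,\mathscr{J}X]=-\mathscr{J}X$). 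On the other hand, $\Gamma_S$ acts as $-{\bf I}$ on $\mathscr{V}TM$, so $\mathscr{V}\Gamma_S(Y)=-\mathscr{V}Y$ and $g\bigl(\mathscr{J}X,\mathscr{V}\Gamma_S(Y)\bigr)=-g\bigl(\mathscr{J}X,\mathscr{V}Y\bigr)$. The two leftover terms cancel identically. The ``special geometry of the geodesic spray'' enters where you already used it: in $\mathcal{L}_S\omega_F=0$ and in the Appendix block formula for $\omega_F$ (equivalently, the fact that $\mathscr{H}TM$ is $\omega_F$-Lagrangean), which the paper asserts but does not prove.

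Your fallback route, by contrast, does not work as sketched. Writing $g(\mathscr{J}X,\mathscr{J}Y)=(\mathscr{J}X)\bigl((\mathscr{J}Y)\mathcal{E}\bigr)$ and commuting $S$ through, the troublesome terms are of the form $\bigl(\mathscr{H}[S,\mathscr{J}X]\bigr)\bigl((\mathscr{J}Y)\mathcal{E}\bigr)$ --- horizontal derivatives of the function $(\mathscr{J}Y)\mathcal{E}$, i.e.\ of $g(C,\mathscr{J}Y)$, not of $\mathcal{E}$ itself --- and these do not vanish in general; horizontal constancy of $\mathcal{E}$ is not enough, and making them cancel against the $\mathscr{V}\Gamma_S$-contributions reproduces exactly the computation displayed above. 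So the fix is: keep the symplectic argument and close it with the two identities just given, dropping the Hessian detour. Once completed, your proof is a genuine alternative to the paper's: it avoids the forward reference to the existence of the classical connections (Theorem \ref{theoremclassicallifts}, which the paper quotes from the literature), at the price of resting on the Appendix formula for $\omega_F$ and of heavier sign bookkeeping.
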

\begin{proof}
We already know from Lemma \ref{lemmafundamental} that this covariant derivative does not depend on the choice of $\nabla$. In particular, we can assume that $\nabla$ is
any one of the classical connections discussed in $\S$\ref{sectionclassicalconnections}, in which case the result follows immediately from
the properties of the connection.
\end{proof}

\begin{definition}
We denote by $\mathscr{C}_\flat$ the following metric contraction of $\mathscr{C}$,
\begin{equation}\nonumber
\mathscr{C}_\flat(X,Y,Z)=g(\mathscr{C}(X,Y),\mathscr{J}(Z)).
\end{equation}
\end{definition}
\noindent Also, for each $w\in TM\backslash 0$, we define
$(\mathcal{C}_\flat)_w:T_{\pi(w)}M\times T_{\pi(w)}M\times T_{\pi(w)}M\rightarrow\mathbb{R}$ by
$(\mathcal{C}_\flat)_w(u,v,t)=i_w\hspace{0.01cm}^{-1}\mathscr{C}_\flat(X,Y,Z)=g_w(\mathcal{C}_w(u,v),t)$,
where $X,Y,Z\in T_w TM$ are any lifts of $u,v,t$, respectively.\\\\
The metric conditions on $\nabla$ we will be interested in can now be stated as
\begin{eqnarray}
{\bf (M1)} & & (\nabla_\cdot g)(\cdot,C)=0.\nonumber\\
{\bf (M2)} & & \mathscr{C}_\flat(\cdot,\cdot,S)=0.\nonumber
\end{eqnarray}
Note that condition {\bf (M2)} may be written as $(\mathcal{C}_\flat)_w(\cdot,\cdot,w)=0$ for all $w\in TM\backslash 0$.

\subsection{Metric properties of the affine connections}
Back to the family of affine connections $D^W$, we can state
\begin{proposition}\label{prop300}
Let $W,U,V,T\in\mathfrak{X}(\mathcal{O})$, with $W$ nowhere null, where $\mathcal{O}\subseteq M$ is an open set.
\begin{itemize}
\item[$(i)$] If {\bf (M1)} and {\bf (M2)} hold, then
\begin{equation}\nonumber
Ug_W(W,V)=g_W(D^W_UW,V)+g_W(W,D^W_UV).
\end{equation}
\item[$(ii)$] If the integral curve of $W$ through some point $x_0$ is a geodesic, then, at
$x_0$,
\begin{equation}\nonumber
Wg_W(T,V)=g_W(D^W_WT,V)+g_W(T,D^W_WV).
\end{equation}
\end{itemize}
\end{proposition}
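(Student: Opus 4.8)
The plan is to transport both identities to the vertical tangent bundle, where they become assertions about the metric compatibility of $\nabla$ with $g$, and then to expand the relevant directional derivative through the identity $X g(\sigma,\tau)=(\nabla_X g)(\sigma,\tau)+g(\nabla_X\sigma,\tau)+g(\sigma,\nabla_X\tau)$, discarding the unwanted terms by means of {\bf (M1)}, {\bf (M2)} and Lemma \ref{lemma200}. Throughout I will use the identification $g_W(i_W^{-1}\xi,i_W^{-1}\eta)=g(\xi,\eta)$ for vertical $\xi,\eta$, and the fact that along $W(\mathcal{O})$ one has $W^{\mathfrak{v}}=C$ and $W^{\mathfrak{h}}=S$.

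For $(i)$, the first observation is that, since $W^{\mathfrak{v}}=C$ along $W(\mathcal{O})$, the function $g_W(W,V)$ on $\mathcal{O}$ is the pullback by $W$ of the function $g(C,V^{\mathfrak{v}})$; hence $U\,g_W(W,V)=\widetilde{U}\,g(C,V^{\mathfrak{v}})$, where $\widetilde{U}={\rm d}W(U)$. I would then expand this derivative into three terms. The term $(\nabla_{\widetilde{U}}g)(C,V^{\mathfrak{v}})$ vanishes by {\bf (M1)} together with the symmetry of $g$ (and hence of $\nabla g$ in its metric slots). For the second term, the lift property $\nabla_{\widetilde{U}}C=\mathscr{V}\widetilde{U}$ combined with (\ref{eq500}) gives $\nabla_{\widetilde{U}}C=i_W(D^W_U W)$, so $g(\nabla_{\widetilde{U}}C,V^{\mathfrak{v}})=g_W(D^W_U W,V)$. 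For the third term I would insert the correction formula (\ref{eq1}) in the form $\nabla_{\widetilde{U}}V^{\mathfrak{v}}=i_W\bigl(D^W_U V+\mathcal{C}_W(D^W_U W,V)\bigr)$; pairing against $C=i_W(W)$ produces $g_W(W,D^W_U V)$ plus the contraction $(\mathcal{C}_\flat)_W(D^W_U W,V,W)$, which is precisely what {\bf (M2)} annihilates. Summing the three contributions yields $(i)$.

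For $(ii)$, the strategy is to reduce the $W$-derivative to a derivative along the spray. Writing again $g_W(T,V)=g(T^{\mathfrak{v}},V^{\mathfrak{v}})\circ W$, we obtain $W\,g_W(T,V)=\widetilde{W}\,g(T^{\mathfrak{v}},V^{\mathfrak{v}})$ with $\widetilde{W}={\rm d}W(W)$. Here the geodesic hypothesis enters: since the integral curve of $W$ through $x_0$ is a geodesic, Lemma \ref{lemmageo} gives $(D^W_W W)(x_0)=0$, which by (\ref{eq500}) means $\mathscr{V}\widetilde{W}=0$ at $W(x_0)$; as the horizontal part of $\widetilde{W}$ is always $W^{\mathfrak{h}}=S$ along $W(\mathcal{O})$, it follows that $\widetilde{W}=S$ at $W(x_0)$. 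Thus, at $x_0$, the derivative is $S\,g(T^{\mathfrak{v}},V^{\mathfrak{v}})$, which expands into $(\nabla_S g)(T^{\mathfrak{v}},V^{\mathfrak{v}})+g(\nabla_S T^{\mathfrak{v}},V^{\mathfrak{v}})+g(T^{\mathfrak{v}},\nabla_S V^{\mathfrak{v}})$. The first summand vanishes by Lemma \ref{lemma200}; and because $W^{\mathfrak{h}}=S$ along $W(\mathcal{O})$ we have directly $D^W_W T=i_W^{-1}\nabla_S T^{\mathfrak{v}}$, with no $\mathcal{C}$-correction since the horizontal lift itself is used, so the remaining two summands are exactly $g_W(D^W_W T,V)+g_W(T,D^W_W V)$.

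The routine parts are the bookkeeping of vertical lifts and the identification of $g$ on vertical lifts with the fundamental tensor. The step I expect to be most delicate is the third term in $(i)$: one must track the correction produced by (\ref{eq1}) and confirm it is precisely the contraction killed by {\bf (M2)}, placing the arguments of $\mathcal{C}_\flat$ in the correct slots, and one must use the symmetry of $\nabla g$ to move $C$ between the two metric arguments so that {\bf (M1)} applies. These are the points where a slot or symmetry slip could occur.
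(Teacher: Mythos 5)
Your proof is correct and follows essentially the same route as the paper: both transport the computation to the vertical bundle via the lifts ${\rm d}W(U)$, ${\rm d}W(W)$, use Lemma \ref{lemma100} and (\ref{eq500}) to identify the terms, and kill the unwanted contributions with {\bf (M1)}, {\bf (M2)} (for $(i)$) and Lemma \ref{lemma200} together with $(D^W_WW)(x_0)=0$ (for $(ii)$). The only difference is organizational — the paper first records the general identity (\ref{eq630}) and then specializes $T=W$ resp.\ $U=W$, whereas you expand each case directly and, in $(ii)$, substitute $S$ for $\widetilde{W}$ at $W(x_0)$ before expanding so that no $\mathcal{C}$-corrections ever appear.
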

\begin{proof}
Computing $\nabla_{{\rm d}W(U)}T^\mathfrak{v}$ and $\nabla_{{\rm d}W(U)}V^\mathfrak{v}$ with the help of Lemma \ref{lemma100}, 
\begin{eqnarray}
(\nabla_{{\rm d}W(U)}g)(T^\mathfrak{v},V^\mathfrak{v})|_{W(x)} & = & (D^W_Ug_W)(T,V)|_x
-(\mathcal{C}_\flat)_W(D^W_UW,T,V)|_x\label{eq630}\\
& - & (\mathcal{C}_\flat)_W(D^W_UW,V,T)|_x\nonumber 
\end{eqnarray}
$(i)$ Let $T=W$ in (\ref{eq630}). As $W^\mathfrak{v}|_{W(\mathcal{O})}=C|_{W(\mathcal{O})}$, the left-hand side of (\ref{eq630}) vanishes since {\bf (M1)} holds. Also, (2) of Lemma \ref{lemmaC} gives
$(\mathcal{C}_\flat)_W(D^W_UW,T,V)=0$,
whereas the hypothesis {\bf (M2)} gives $(\mathcal{C}_\flat)_W(D^W_UW,V,T)=0$.
\\
$(ii)$ Let $U=W$ in (\ref{eq630}). The hypothesis on $W$ implies ${\rm d}W(W)|_{W(x_0)}=S|_{W(x_0)}$ and hence, from Lemma \ref{lemma200}, 
$(\nabla_{{\rm d}W(U)}g)(T^\mathfrak{v},V^\mathfrak{v})|_{W(x_0)}=0$. Also, $(D^W_WW)(x_0)=0$ and the result follows.

\end{proof}

\begin{remark}
\begin{enumerate}
\item Observe that although we have assumed the validity of {\bf (T1)} throughout this section, property $(i)$ of Proposition \ref{prop300}
does not require that hypothesis.
\item As usual, Proposition \ref{prop300} implies analogous statements for covariant derivatives which, for the sake of brevity, we will not
enunciate here.
\end{enumerate}
\end{remark}

\subsection{The second variation of energy and the second fundamental form}\label{subsectionsecondvariation}

\textsl{Throughout this subsection, we assume that conditions {\bf (M1)} and {\bf (M2)} hold.}\\\\
Let be fixed two submanifolds $\mathcal{P}_1,\mathcal{P}_2\subset M$ and denote by $\Omega_{\mathcal{P}_1,\mathcal{P}_2}$ the set of all
regular curves $\lambda:I=[0,1]\rightarrow M$ joining $\mathcal{P}_1$ to $\mathcal{P}_2$. The corresponding energy functional is
\begin{equation}\label{energyfunctional}
{\rm E}~:~\Omega_{\mathcal{P}_1,\mathcal{P}_2}\longrightarrow\mathbb{R}~,~~~{\rm E}(\lambda)=\frac{1}{2}\int_0^1F(\dot{\lambda}(t))^2dt.
\end{equation}

When computing the second variation of (\ref{energyfunctional}) at a critical point, a type of second fundamental form will come into play. Firstly, for a submanifold $\mathcal{P}\subset M$
we define
\begin{definition}\
Given $x\in\mathcal{P}$, we say that a vector $\eta\in T_xM\backslash 0$ is {\it normal} to $\mathcal{P}$ if $g_\eta(\eta,u)=0$ for all $u\in T_x\mathcal{P}$.
The set of all such vectors forms a cone in $T_xM\backslash 0$, called {\it the normal cone to $\mathcal{P}$ at $x$}, and denoted by $\nu_x(\mathcal{P})$.
\end{definition}

\begin{definition}
Given $\eta\in\nu_x(\mathcal{P})$, the {\it second fundamental form of $\mathcal{P}$ relative to the normal direction $\eta$} is the map
\begin{equation}\nonumber
{\rm h}^\mathcal{P}_\eta:T_x\mathcal{P}\times T_x\mathcal{P}\rightarrow\mathbb{R}~,~~{\rm h}^\mathcal{P}_\eta(u,v)=\frac{1}{2}g_\eta\bigl(\eta\hspace{0.05cm},\hspace{0.05cm}D^\eta_UV+D^\eta_VU\bigr),
\end{equation}
where $U,V\in\mathfrak{X}(\mathcal{P})$ are any extensions of $u,v$. Of course, this is a well-defined symmetric bilinear form.
\end{definition}

\begin{remark}
\begin{enumerate}
\item We remark that, as a consequence of Proposition \ref{propfundamental} and the theorem below, ${\rm h}^\mathcal{P}_\eta$ does not depend on the choice of $\nabla$ as long as
conditions {\bf (T1)}, {\bf (M1)} and {\bf (M2)} hold. Indeed, this is also the case even if we drop assumption {\bf (T1)} as we show in Proposition \ref{propfundamentalform}.
\item If, in addition, {\bf (T2)} holds, then it follows from Proposition \ref{propsymmetry} that ${\rm h}_\eta^\mathcal{P}(u,v)=g_\eta(\eta,D^\eta _UV)$.
\end{enumerate}
\end{remark}

\begin{theorem}
Let $s\mapsto\lambda_s\in\Omega_{\mathcal{P}_1,\mathcal{P}_2}$ be a smooth 1-parameter family starting at a geodesic $\lambda=\lambda_0$ normal to $\mathcal{P}_1$ and $\mathcal{P}_2$
at its extremes. Denoting by $V$ the variational vector field along $\lambda$, we have
\begin{eqnarray}
\frac{d^2}{ds^2}\Big|_{s=0}{\rm E}(\lambda_s) & = &  \int_0^1g_{\dot{\lambda}}\Bigl(\frac{D^{\dot{\lambda}}V}{dt},\frac{D^{\dot{\lambda}}V}{dt}\Bigr)-
g_{\dot{\lambda}}\bigl({\bf R}_{\dot{\lambda}}(V),V\bigr)\hspace{0.05cm}dt\label{eq700}\\
& & +~{\rm h}^{\mathcal{P}_2}_{\dot{\lambda}(1)}(V(1),V(1))-{\rm h}^{\mathcal{P}_1}_{\dot{\lambda}(0)}(V(0),V(0)).\nonumber
\end{eqnarray}
\end{theorem}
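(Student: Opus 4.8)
The plan is to compute the second variation of $\mathrm{E}(\lambda_s)=\frac{1}{2}\int_0^1 F(\dot\lambda_s(t))^2\,dt$ by differentiating twice under the integral sign and then exploiting the metric and symmetry properties of the family $D^W$ established earlier. Let $H(s,t)=\lambda_s(t)$ be the variation, with $T=\partial H/\partial t$ and $U=\partial H/\partial s$ the coordinate fields, so that $T$ is nowhere null and $U(0,t)=V(t)$. The key observation is that $\frac{1}{2}F(\dot\lambda_s)^2=\frac{1}{2}g_T(T,T)$, and by property $(ii)$ of Proposition \ref{prop300} (applied along the $t$-curves, which are the geodesic $\lambda$ only at $s=0$) together with property $(i)$ (valid for the $s$-derivative without the geodesic hypothesis), the metric $g_T$ differentiates like a genuine Riemannian metric. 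Concretely, I would first show
\begin{equation}\nonumber
\frac{1}{2}\frac{\partial}{\partial s}g_T(T,T)=g_T\Bigl(\frac{D^TT}{ds},T\Bigr),
\end{equation}
using property $(i)$ of Proposition \ref{prop300}, and then differentiate once more in $s$.

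The second step is to rewrite $\frac{D^TT}{ds}$ using the symmetry relation (\ref{symmetryeq}), namely $\frac{D^TT}{ds}=\frac{D^TU}{dt}$, which holds since {\bf (T1)} is assumed. Differentiating $g_T(\frac{D^TU}{dt},T)$ in $s$ and applying the metric compatibility again, the mixed second covariant derivatives appear, and I would invoke Theorem \ref{theoremcurvature} at $s=0$ (where $\lambda$ is a geodesic) to convert $\frac{D^T}{ds}\frac{D^TT}{dt}-\frac{D^T}{dt}\frac{D^TT}{ds}$ into the curvature term ${\bf R}_{\dot\lambda}(V)$. Since $\lambda$ is a geodesic, $\frac{D^{\dot\lambda}\dot\lambda}{dt}=0$ by Lemma \ref{lemmageo}, so several terms drop out, and the interior contribution collapses to
\begin{equation}\nonumber
\int_0^1 g_{\dot\lambda}\Bigl(\frac{D^{\dot\lambda}V}{dt},\frac{D^{\dot\lambda}V}{dt}\Bigr)-g_{\dot\lambda}\bigl({\bf R}_{\dot\lambda}(V),V\bigr)\,dt
\end{equation}
after an integration by parts in $t$ that produces a boundary term $g_{\dot\lambda}(\frac{D^{\dot\lambda}V}{dt},V)\big|_0^1$.

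The third and most delicate step is identifying the two boundary terms with the second fundamental forms ${\rm h}^{\mathcal{P}_i}_{\dot\lambda(i)}$. At the endpoints the variation moves within $\mathcal{P}_1$ and $\mathcal{P}_2$, so $U(s,i)$ is tangent to $\mathcal{P}_i$; I would use the symmetry property (Proposition \ref{propsymmetry}, part $(1)$, which needs only {\bf (T1)}) to replace the covariant derivative of $V$ in the $t$-direction by the derivative of $T$ in the $s$-direction, i.e. to bring in $\frac{D^TT}{ds}$ evaluated at the boundary. Recognizing the resulting expression $\frac{1}{2}g_{\dot\lambda(i)}(\dot\lambda(i),D^{V}_VV+\cdots)$ as precisely ${\rm h}^{\mathcal{P}_i}_{\dot\lambda(i)}(V(i),V(i))$ requires care with the definition of the second fundamental form and the fact that $\dot\lambda(i)\in\nu_{\lambda(i)}(\mathcal{P}_i)$ by the normality hypothesis. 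The main obstacle I anticipate is the bookkeeping at the boundary: correctly matching the symmetrized covariant derivative appearing in ${\rm h}^\mathcal{P}_\eta$ with the boundary term coming from the interior integration by parts, and handling the sign conventions so that the $\mathcal{P}_2$ term enters with a plus and the $\mathcal{P}_1$ term with a minus. Once that identification is secured, collecting the interior integral and the two boundary contributions yields (\ref{eq700}).
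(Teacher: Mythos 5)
Your overall plan --- run the Riemannian second--variation computation using Proposition \ref{prop300}, the symmetry relation (\ref{symmetryeq}) and Theorem \ref{theoremcurvature} --- is indeed what the paper intends, but the order in which you perform the operations creates a genuine gap at the central step. Differentiating $\int_0^1 g_T\bigl(D^TU/dt,T\bigr)dt$ in $s$ under the integral sign, property $(i)$ of Proposition \ref{prop300} gives
\begin{equation}\nonumber
\frac{\partial}{\partial s}g_T\Bigl(T,\frac{D^TU}{dt}\Bigr)=g_T\Bigl(\frac{D^TT}{ds},\frac{D^TU}{dt}\Bigr)+g_T\Bigl(T,\frac{D^T}{ds}\frac{D^TU}{dt}\Bigr),
\end{equation}
so the mixed second derivative you must control is $\frac{D^T}{ds}\frac{D^TU}{dt}$ (equivalently, by (\ref{symmetryeq}), $\frac{D^T}{ds}\frac{D^TT}{ds}$) --- a second covariant derivative of the field $U$, not of $T$. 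Theorem \ref{theoremcurvature} converts only $\frac{D^T}{ds}\frac{D^TT}{dt}-\frac{D^T}{dt}\frac{D^TT}{ds}$ into curvature, and the paper establishes no commutation formula for an arbitrary field along the variation: in this reference-vector formalism such a formula would carry extra terms, and the proof of Proposition \ref{propositioncurvature} uses in an essential way that the differentiated field is the reference field itself (e.g. $W^\mathfrak{v}|_{W(\mathcal{O})}=C$ and $(D^W_WW)(x_0)=0$). So the step ``invoke Theorem \ref{theoremcurvature}'' does not apply to the term your computation actually produces. The paper's proof is organized precisely to dodge this: integrate by parts in $t$ at the level of the \emph{first} variation, obtaining ${\rm E}'(s)=\bigl[g_T(U,T)\bigr]_0^1-\int_0^1 g_T\bigl(U,D^TT/dt\bigr)dt$, and only then differentiate in $s$. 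Since neither slot of $g_T(U,D^TT/dt)$ is $T$, Proposition \ref{prop300} does not apply to it either; instead one uses that $(D^TT/dt)(0,t)=0$ (because $\lambda$ is a geodesic) together with the tensoriality of $D^Wg_W$ to conclude $\partial_s g_T(U,D^TT/dt)\big|_{s=0}=g_T\bigl(U,\frac{D^T}{ds}\frac{D^TT}{dt}\bigr)\big|_{s=0}$, which is exactly the combination Theorem \ref{theoremcurvature} handles. This maneuver is the one non-routine point the paper's proof singles out, and it is absent from your sketch.

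Your boundary bookkeeping also does not close. The boundary term you propose, $g_{\dot\lambda}\bigl(D^{\dot\lambda}V/dt,V\bigr)\big|_0^1$, equals by symmetry $g_T\bigl(D^TT/ds,U\bigr)$ at the endpoints, and this is \emph{not} the second fundamental form: one has ${\rm h}^{\mathcal{P}_i}_{\dot\lambda(i)}(V(i),V(i))=g_{\dot\lambda(i)}\bigl(\dot\lambda(i),D^TU/ds|_{(0,i)}\bigr)$, which involves the $s$-derivative of $U$, and the two expressions differ by $\partial_s g_T(T,U)$. In the paper's organization this discrepancy resolves itself: the endpoint contribution is $\partial_s\bigl[g_T(U,T)\bigr]_0^1$, which property $(i)$ splits as $g_T(D^TT/ds,U)+g_T(T,D^TU/ds)$; the first summand cancels against the boundary term $g_{\dot\lambda}(D^{\dot\lambda}V/dt,V)\big|_0^1$ coming from the second integration by parts, and the second summand is exactly the ${\rm h}^{\mathcal{P}_i}$ term, with the signs of the statement. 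In your organization there is no $\partial_s\bigl[g_T(U,T)\bigr]_0^1$ term at all (you differentiated under the integral from the start), so this cancellation never happens; the ${\rm h}$ terms would instead have to come from integrating by parts $g_T\bigl(T,\frac{D^T}{dt}\frac{D^TU}{ds}\bigr)$, which again presupposes the commutation on $U$ that is unavailable.
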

\begin{proof}
Once we have at hand Proposition \ref{prop300}, Theorem \ref{theoremcurvature} and the symmetry relation (\ref{symmetryeq}), the proof proceeds exactly as in the Riemannian case
since the homogeneity of $F$ implies the relation $F(w)^2=g_w(w,w)$. We only remark that at some point it will be necessary to assure that
\begin{equation}\nonumber
\frac{\partial}{\partial s}g_T\Bigl(\frac{D^TT}{dt},U\Bigr)\Big|_{s=0}=g_T\Bigl(\frac{D^T}{ds}\frac{D^TT}{dt},U\Bigr)\Big|_{s=0}.
\end{equation}
(following the notation in Theorem \ref{theoremcurvature}). Now, this equality is true regardless of any assumptions on $\nabla$ as follows from the fact that
$(D^TT/dt)(0,t)=0$ for all $t$ and the following trivial fact that just reflects the tensorial nature of $D^Wg_W$:
\\\\
{\it Given $W,U,V,T\in\mathfrak{X}(\mathcal{O})$, if $V(x_0)=0$ then, at $x_0$, $Tg_W(V,U)=g_W(D^W_TV,U)$}
\end{proof}

\section{The connections of Berwald, Cartan, Chern-Rund and Hashiguchi}\label{sectionclassicalconnections}

\subsection{The Berwald connection of a spray}\label{subberwald}
The Berwald connection is defined for any spray $S$ regardless to whether or not $S$ is the geodesic spray of a Finsler metric.
We take from \cite{grifone2} the following expressions.

\begin{propdef}
The following
\begin{eqnarray}
\nabla^*_{\mathscr{J}(X)}\mathscr{J}(Y) & = & \mathscr{J}[\mathscr{J}(X),Y]\nonumber\\
\nabla^*_{\mathscr{H}X}\mathscr{J}(Y) & = & \mathscr{V}[\mathscr{H}X,\mathscr{J}(Y)]\nonumber
\end{eqnarray}
defines a lift $\nabla^*$ of $\Gamma_S$, called the \textsl{Berwald connection of $S$}.
\end{propdef}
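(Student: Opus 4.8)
The plan is to verify the three defining requirements of a ``lift of $\Gamma_S$'' for the operator $\nabla^*$ given by the two displayed formulas: first that the two formulas together consistently define a linear connection on $p:\mathscr{V}TM\rightarrow TM\backslash 0$ (i.e. the prescription is independent of the various non-unique choices of $X,Y$ representing a given vertical vector and satisfies $\mathbb{R}$-linearity plus the Leibniz rule in the base direction), and second that this connection is almost-projectable with $\nabla^*_XC=\mathscr{V}X$ for all $X$. The key point enabling everything is the decomposition $T(TM\backslash 0)=\mathscr{H}TM\oplus\mathscr{V}TM$ from~(\ref{decomposition}): every $Z\in\mathfrak{X}(TM\backslash 0)$ splits as $Z=\mathscr{H}Z+\mathscr{V}Z$, and every vertical vector field is of the form $\mathscr{J}(X)$ for some $X$, so specifying $\nabla^*_Z\mathscr{J}(Y)$ for $Z$ horizontal and for $Z$ vertical determines $\nabla^*$ on all of $T(TM\backslash 0)$ by additivity in the subscript.

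First I would address well-definedness. Since $\mathscr{J}$ has kernel $\mathscr{V}TM$, the vertical section $\mathscr{J}(Y)$ does not determine $Y$; I must check that the right-hand sides depend only on $\mathscr{J}(Y)$ and, in the vertical case, only on $\mathscr{J}(X)$. For the tensorial subscript this is where the structure of $\mathscr{J}$ does the work: replacing $Y$ by $Y+\mathscr{J}(Y')$ changes $\mathscr{J}[\mathscr{J}(X),Y]$ and $\mathscr{V}[\mathscr{H}X,\mathscr{J}(Y)]$ by terms involving brackets of vertical fields, which one checks land in $\mathscr{V}TM$ in such a way that the defining expressions are unaffected; this is precisely the kind of computation used already in the proof of Lemma~\ref{lemmaC}, where $[\mathscr{J}(X),Y]$ is seen to be vertical whenever $Y$ is vertical, forcing $\mathscr{J}[\mathscr{J}(X),Y]=0$. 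I would then confirm the connection axioms: $\mathbb{R}$-bilinearity is immediate, $C^\infty$-linearity in the subscript follows because the $\mathscr{J}$- and $\mathscr{V}$-brackets absorb the correction terms produced by scaling (the derivative terms $(\mathscr{J}(X)f)\mathscr{J}(Y)$ etc. are themselves vertical and either vanish under $\mathscr{J}$ or are accounted for), and the Leibniz rule $\nabla^*_Z(f\cdot\mathscr{J}(Y))=(Zf)\mathscr{J}(Y)+f\nabla^*_Z\mathscr{J}(Y)$ comes out of the Leibniz property of the Lie bracket.

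Finally I would check that $\nabla^*$ is a lift of $\Gamma_S$, i.e. $\nabla^*_ZC=\mathscr{V}Z$ for all $Z$. Writing $C=\mathscr{J}(S)$ and splitting $Z=\mathscr{H}Z+\mathscr{V}Z$: for the vertical part $\mathscr{V}Z=\mathscr{J}(X)$ the first formula gives $\nabla^*_{\mathscr{J}(X)}C=\mathscr{J}[\mathscr{J}(X),S]=\mathscr{J}(X)=\mathscr{V}Z$ by Lemma~\ref{lemmaspray}; for the horizontal part $\mathscr{H}Z$ the second formula gives $\nabla^*_{\mathscr{H}Z}C=\mathscr{V}[\mathscr{H}Z,C]$, which I must show vanishes. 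This last vanishing is the step I expect to be the genuine obstacle: it amounts to the statement that horizontal fields commute with $C$ up to a horizontal correction, i.e. $[\mathscr{H}Z,C]$ is horizontal, which is exactly the $1$-homogeneity of the canonical connection $\Gamma_S$ (the condition $[C,\Gamma_S]=0$) combined with $\Gamma_S=-[S,\mathscr{J}]$. I would derive it by applying $\mathscr{J}$ and $\Gamma_S$ to $[\mathscr{H}Z,C]$ and using that $\mathscr{H}Z\in\ker(\Gamma_S-\mathbf{I})$ together with the homogeneity $[C,\Gamma_S]=0$, so that the bracket has no vertical component. Granting that, the two pieces assemble to $\nabla^*_ZC=\mathscr{V}Z$, establishing both almost-projectability (the case $Z$ vertical) and the lift condition, which completes the proof.
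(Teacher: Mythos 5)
Your proposal is correct, but there is no internal proof to compare it with: the paper does not prove this Proposition-Definition at all, it simply imports the two formulas from Grifone \cite{grifone2} (``We take from \cite{grifone2} the following expressions''), so your argument supplies a verification the paper delegates to the literature. The points you identify are the right ones and they do go through: well-definedness of the first formula in $Y$ holds because the bracket of two vertical fields is vertical, hence killed by $\mathscr{J}$ (for the second formula there is actually nothing to check, since its right-hand side involves $Y$ only through $\mathscr{J}(Y)$ --- your phrasing suggests a computation is needed there, but it is not); tensoriality in the subscript follows from $\mathscr{J}\circ\mathscr{J}=0$ and $\mathscr{V}\circ\mathscr{H}=0$, which annihilate the correction terms $(Yf)\,\mathscr{J}(\mathscr{J}(X))$ and $(\mathscr{J}(Y)f)\,\mathscr{V}(\mathscr{H}X)$ respectively; the Leibniz rule follows from that of the Lie bracket via $f\mathscr{J}(Y)=\mathscr{J}(fY)$ and $\mathscr{V}\mathscr{J}(Y)=\mathscr{J}(Y)$; almost-projectability is exactly Lemma \ref{lemmaspray} applied with $C=\mathscr{J}(S)$. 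The one genuinely non-formal step, $\mathscr{V}[\mathscr{H}X,C]=0$, follows as you say from the $1$-homogeneity of $\Gamma_S$ guaranteed by Grifone's theorem quoted in $\S$\ref{sectionsprays}: evaluating $0=[C,\Gamma_S](X)=[C,\Gamma_S(X)]-\Gamma_S[C,X]$ on a horizontal $X$ (for which $\Gamma_S(X)=X$) gives $[C,X]=\Gamma_S[C,X]$, so $[C,X]\in\ker(\Gamma_S-\mathbf{I})=\mathscr{H}TM$ and its vertical part vanishes. You correctly flagged this last step as the crux; with it spelled out as above, the proof is complete.
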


\noindent Let $\mathscr{F}$ denote the almost-complex structure
on $TM\backslash 0$ characterized by $\mathscr{F}\mathscr{J}=\mathscr{H}$ and
$\mathscr{F}\mathscr{H}=-\mathscr{J}$. Having in mind (\ref{eq3}), a direct computation shows that any linear connection $\nabla$ on (\ref{verticalbundle})
may be written as
\begin{equation}\nonumber
\nabla_X\mathscr{J}(Y)=\nabla^*_X\mathscr{J}(Y)+\mathscr{C}(\mathscr{F}X,Y)+\mathscr{C}'(X,Y), 
\end{equation}
where $\mathscr{C}'(X,Y)=\nabla_{\mathscr{H}X}\mathscr{J}(Y)-\mathscr{V}[\mathscr{H}X,\mathscr{J}(Y)]$ defines a semi-basic $\mathscr{V}TM$-valued tensor field
on $TM\backslash 0$. Actually, one can show the following

\begin{proposition}
There is a 1-1 correspondence between the lifts of $\Gamma_S$ and the pairs of $\mathscr{V}TM$-valued semi-basic tensor fields $\mathscr{C}$ and $\mathscr{C}'$
on $TM\backslash 0$ satisfying $\mathscr{C}(\cdot,S)=\mathscr{C}'(\cdot,S)=0$.
\end{proposition}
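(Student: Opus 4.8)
The plan is to establish the bijection by exhibiting explicit maps in both directions and checking they are mutually inverse. First I would address the forward direction: given any lift $\nabla$ of $\Gamma_S$, the displayed decomposition $\nabla_X\mathscr{J}(Y)=\nabla^*_X\mathscr{J}(Y)+\mathscr{C}(\mathscr{F}X,Y)+\mathscr{C}'(X,Y)$ together with the definitions of $\mathscr{C}$ and $\mathscr{C}'$ already associates to $\nabla$ a pair $(\mathscr{C},\mathscr{C}')$. I must verify that these tensors lie in the claimed class, namely that they are $\mathscr{V}TM$-valued, semi-basic, and satisfy $\mathscr{C}(\cdot,S)=\mathscr{C}'(\cdot,S)=0$. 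Semi-basicness of $\mathscr{C}$ and the relation $\mathscr{C}(\cdot,S)=0$ are exactly Lemma \ref{lemmaC}, so those come for free once one recalls that $\mathscr{C}(X,Y)={\rm T}(\mathscr{J}(X),Y)$; for $\mathscr{C}'$, the tensorial (pointwise $C^\infty$-linear) character in each slot and its semi-basicness need a short direct check from its defining formula, and the identity $\mathscr{C}'(\cdot,S)=0$ should follow from $\nabla_{\mathscr{H}X}\mathscr{J}(S)=\nabla_{\mathscr{H}X}C=\mathscr{V}(\mathscr{H}X)=0$ (using that $\nabla$ is a lift, so $\nabla_Z C=\mathscr{V}Z$, and that $\mathscr{H}X$ is horizontal) together with $\mathscr{V}[\mathscr{H}X,C]=0$ for the second term in the definition of $\mathscr{C}'$.

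For the reverse direction I would take an arbitrary pair of semi-basic $\mathscr{V}TM$-valued tensor fields $(\mathscr{C},\mathscr{C}')$ with $\mathscr{C}(\cdot,S)=\mathscr{C}'(\cdot,S)=0$ and \emph{define} a candidate operator by the same formula, $\nabla_X\mathscr{J}(Y):=\nabla^*_X\mathscr{J}(Y)+\mathscr{C}(\mathscr{F}X,Y)+\mathscr{C}'(X,Y)$. The first task is to confirm this prescription actually defines a linear connection on the vertical bundle (\ref{verticalbundle}): one checks $C^\infty$-linearity in $X$ (immediate since $\nabla^*$ is a connection, $\mathscr{F}$ is a bundle map, and $\mathscr{C},\mathscr{C}'$ are tensorial) and the Leibniz rule in the $\mathscr{J}(Y)$ argument — here the correction terms contribute nothing to the Leibniz defect because they are tensorial in $Y$ (semi-basicness guarantees they depend only on $\mathscr{J}(Y)$, not on derivatives), so the Leibniz rule is inherited entirely from $\nabla^*$. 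Next one verifies that $\nabla$ so defined is again a lift of $\Gamma_S$, i.e. almost-projectable with $\nabla_X C=\mathscr{V}X$; this is precisely where the normalization $\mathscr{C}(\cdot,S)=\mathscr{C}'(\cdot,S)=0$ is used, since setting $Y=S$ (so $\mathscr{J}(Y)=C$) kills both correction terms and leaves $\nabla_X C=\nabla^*_X C=\mathscr{V}X$.

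Finally I would check that the two assignments are mutually inverse. One direction is transparent: feeding a pair $(\mathscr{C},\mathscr{C}')$ through the reverse construction and then reading off its associated pair recovers $(\mathscr{C},\mathscr{C}')$, provided one confirms that the tensor extracted as the ``$\mathscr{C}$'' of the constructed $\nabla$ via ${\rm T}(\mathscr{J}(\cdot),\cdot)$ is indeed the input $\mathscr{C}$, and similarly for $\mathscr{C}'$. The other direction — starting from $\nabla$, forming $(\mathscr{C},\mathscr{C}')$, and reconstructing $\nabla$ — is just the displayed decomposition, which is asserted to hold for \emph{any} linear connection, so uniqueness of the pair amounts to observing that $\mathscr{C}$ and $\mathscr{C}'$ are determined by $\nabla$ through their defining formulas. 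The main obstacle I anticipate is the bookkeeping around $\mathscr{F}$ and the two evaluation slots: one must confirm that the terms $\mathscr{C}(\mathscr{F}X,Y)$ and $\mathscr{C}'(X,Y)$ are genuinely independent and separately recoverable from $\nabla$ — that is, that decomposing into the vertical-argument part (via $\mathscr{F}$ converting $\mathscr{H}X\leftrightarrow\mathscr{J}(\cdot)$) and the horizontal-argument part does not mix them — so that the correspondence is truly injective. This hinges on the structural identities $\mathscr{F}\mathscr{J}=\mathscr{H}$, $\mathscr{F}\mathscr{H}=-\mathscr{J}$, which cleanly separate how $\mathscr{C}$ and $\mathscr{C}'$ enter according to whether $X$ is vertical or horizontal, and on the semi-basicness ensuring each correction is insensitive to the horizontal-versus-vertical ambiguity in the second argument.
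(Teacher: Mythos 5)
The paper states this proposition without proof (it is introduced only with ``one can show''), so your argument fills that gap rather than paralleling an existing one; it is correct, and it is exactly the completion suggested by the decomposition displayed just before the statement: extract $(\mathscr{C},\mathscr{C}')$ from a lift, rebuild a connection from a given pair, and use $\mathscr{F}\mathscr{J}=\mathscr{H}$, $\mathscr{F}\mathscr{H}=-\mathscr{J}$ together with semi-basicness to check the two assignments separate cleanly and are mutually inverse. The one assertion you leave unjustified, $\mathscr{V}[\mathscr{H}X,C]=0$, is precisely where the specific structure of $\Gamma_S$ enters: it is equivalent to the $1$-homogeneity of $\Gamma_S$ guaranteed by Grifone's theorem, or, within the paper's framework, it follows from the Proposition-Definition of $\S$\ref{subberwald} stating that the Berwald connection $\nabla^*$ is itself a lift, since then $\mathscr{V}[\mathscr{H}X,C]=\nabla^*_{\mathscr{H}X}\mathscr{J}(S)=\mathscr{V}(\mathscr{H}X)=0$; without this one-line citation the identity $\mathscr{C}'(\cdot,S)=0$ in your forward direction is asserted rather than proved.
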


\subsection{The connections of Berwald (bis), Cartan, Chern-Rund and Hashiguchi}
Now, let us suppose that $S$ is the geodesic spray of a Finsler metric $F$.
\begin{definition}
For each $w\in TM\backslash 0$, the {\it Cartan tensor} of $F$ at $w$ is the 3-linear map ${\rm C}_w:T_{\pi(w)}M\times T_{\pi(w)}M\times T_{\pi(w)}M\rightarrow\mathbb{R}$,
\begin{equation}\nonumber
{\rm C}_w(u,v,z)=\frac{1}{4}\frac{\partial^3}{\partial r\partial s\partial t}\Big|_{r=s=t=0}F(w+ru+sv+tz)^2.
\end{equation}
\end{definition}
\noindent Next we define the tensor ${\rm C}'$. For that, let $D^W$ be the affine connections associated to any lift $\nabla$ of $\Gamma_S$ satisfying {\bf (T1)} (e.g. the Berwald lift).
\begin{definition}
Given $w\in TM\backslash 0$, we define a 3-linear map ${\rm C}'_w:T_{\pi(w)}M\times T_{\pi(w)}M\times T_{\pi(w)}M\rightarrow\mathbb{R}$ as 
\begin{equation}\nonumber
{\rm C}_w'(u,v,z)=(D^W_W{\rm C}_W)|_{\pi(w)}(u,v,z),
\end{equation}
where $W$ is any nowhere null vector field defined around $\pi(w)$ whose integral curve through $\pi(w)$ is a geodesic.
\end{definition}

\begin{lemma}\label{cartanlemma}
For each $w\in TM\backslash 0$, ${\rm C}_w$ and ${\rm C}'_w$ are fully symmetric and ${\rm C}_w(\cdot,\cdot,w)={\rm C}'_w(\cdot,\cdot,w)=0$. 
\end{lemma}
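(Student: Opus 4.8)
The plan is to treat the two tensors in turn, first establishing the properties of ${\rm C}$ and then bootstrapping to ${\rm C}'$.

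First I would dispose of ${\rm C}_w$. Full symmetry is immediate: ${\rm C}_w(u,v,z)$ is a third-order mixed partial derivative of the smooth function $(r,s,t)\mapsto F(w+ru+sv+tz)^2$ at the origin (note $w\neq 0$, so $F^2$ is smooth there), and mixed partials commute. For the nullity ${\rm C}_w(\cdot,\cdot,w)=0$ I would first record the identity ${\rm C}_w(u,v,z)=\tfrac12\,\partial_r|_{r=0}\,g_{w+rz}(u,v)$, obtained by recognizing the inner two derivatives in the definition of ${\rm C}_w$ as the fundamental tensor $g_{w+rz}$ (the factor $\tfrac14=\tfrac12\cdot\tfrac12$ matching). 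Since $F^2$ is $2$-homogeneous, its fiber-Hessian $g$ is $0$-homogeneous, i.e. $g_{\lambda w}=g_w$ for $\lambda>0$; hence $\partial_r|_{r=0}g_{(1+r)w}(u,v)=0$, which is exactly ${\rm C}_w(u,v,w)=0$ (the usual Euler-type identity). By the symmetry just proven, $w$ may equally be inserted in any slot.

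Next I would turn to ${\rm C}'_w$, whose definition involves a nowhere null field $W$ with $W(\pi(w))=w$ (so that ${\rm C}_W|_{\pi(w)}={\rm C}_w$) whose integral curve through $x:=\pi(w)$ is a geodesic, together with the affine connection $D^W$. The key observation is that ${\rm C}_W$ is a fully symmetric $(0,3)$-tensor field (being fibrewise ${\rm C}_{W(y)}$, symmetric by the previous paragraph), and that covariant differentiation preserves full symmetry: permuting the three arguments of $(D^W_W{\rm C}_W)(u,v,z)$ permutes the terms of the Leibniz expansion among themselves. Evaluating at $x$ gives the full symmetry of ${\rm C}'_w$.

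The main step --- and the part I expect to require the most care --- is ${\rm C}'_w(\cdot,\cdot,w)=0$. Here I would exploit the nullity already established for ${\rm C}$: the contraction $\beta:={\rm C}_W(\cdot,\cdot,W)$ is the \emph{identically zero} $(0,2)$-tensor field on $\mathcal{O}$, since ${\rm C}_{W(y)}(\cdot,\cdot,W(y))=0$ for every $y$. Differentiating the identity $\beta\equiv 0$ with $D^W_W$ and using that covariant differentiation commutes with contraction yields
\[
0=(D^W_W\beta)(u,v)=(D^W_W{\rm C}_W)(u,v,W)+{\rm C}_W\bigl(u,v,D^W_W W\bigr).
\]
By hypothesis the integral curve of $W$ through $x$ is a geodesic, so Lemma~\ref{lemmageo} (in the form already used in the proofs of Proposition~\ref{propositioncurvature} and Proposition~\ref{prop300}) gives $(D^W_W W)(x)=0$; hence the second term drops at $x$ and, since $W(x)=w$, we are left with ${\rm C}'_w(u,v,w)=(D^W_W{\rm C}_W)(u,v,w)|_x=0$. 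Combined with the symmetry of ${\rm C}'_w$, this finishes the proof. Note this argument holds for every admissible choice of $W$, so it is insensitive to any question of well-definedness of ${\rm C}'_w$.
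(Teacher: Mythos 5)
Your proof is correct and follows essentially the same route as the paper's (very terse) proof: the paper cites the properties of ${\rm C}_w$ as well known and asserts that those of ${\rm C}'_w$ ``follow easily,'' which is precisely what you carry out --- symmetry of ${\rm C}_w$ by commuting mixed partials, nullity via the $0$-homogeneity of $g$ (Euler's identity), symmetry of ${\rm C}'_w$ from the Leibniz expansion, and nullity of ${\rm C}'_w(\cdot,\cdot,w)$ by differentiating the identity ${\rm C}_W(\cdot,\cdot,W)\equiv 0$ and using $(D^W_WW)(x)=0$ at the geodesic point. In effect you have supplied the details the paper omits, and your closing remark that the argument works for every admissible $W$ is a worthwhile observation the paper leaves implicit.
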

\begin{proof}
The statements about ${\rm C}$ are well-known (e.g. \cite{chern}), whereas the ones about ${\rm C}'$ follow easily from those. 
\end{proof}

Let us consider the following conditions for a linear connection $\nabla$ on (\ref{verticalbundle})
\begin{eqnarray}
{\bf (M3)} & & (\nabla_X g)(\cdot,\cdot)=2{\rm C}(\mathscr{V}X,\hspace{0.05cm}\cdot\hspace{0.05cm},\hspace{0.05cm}\cdot\hspace{0.05cm})\nonumber\\
{\bf (M4)} & & (\nabla_X g)(\cdot,\cdot)=2{\rm C}'(\mathscr{J}(X),\hspace{0.05cm}\cdot\hspace{0.05cm},\hspace{0.05cm}\cdot\hspace{0.05cm})\nonumber\\
{\bf (M5)} & & (\nabla_X g)(\cdot,\cdot)=2{\rm C}(\mathscr{V}X,\hspace{0.05cm}\cdot\hspace{0.05cm},\hspace{0.05cm}\cdot\hspace{0.05cm})+
2{\rm C}'(\mathscr{J}(X),\hspace{0.05cm}\cdot\hspace{0.05cm},\hspace{0.05cm}\cdot\hspace{0.05cm})\nonumber\\
{\bf (M6)} & & \nabla g = 0\nonumber\\
{\bf (M7)} & & \mathscr{C}_\flat(X,Y,Z)=\mathscr{C}_\flat(X,Z,Y),~~\mbox{for all $X,Y,Z$},\nonumber
\end{eqnarray}
where we have tacitly identified ${\rm C}$ and ${\rm C}'$ with tensor fields on (\ref{verticalbundle}) via the vertical lift. It follows from Lemma \ref{cartanlemma} that
each one of {\bf (M3)}, {\bf (M4)}, {\bf (M5)} implies {\bf (M1)}, whereas (2) of Lemma \ref{lemmaC} guaranties that {\bf (M7)} implies {\bf (M2)}.
\\\\
The connections of Berwald, Cartan, Chern-Rund and Hashiguchi can be described by the following existence-uniqueness results. 
We remark that, except for the statement concerning the Cartan connection (which is stated without proof in \cite{zadeh68}), we were not able to
find in the literature an appropriate reference for them, although one can show without difficult their equivalence with 
the statements in \cite{bucataru}.
\begin{theorem}\label{theoremclassicallifts}
For each set of conditions below, there exists one, and only one, lift $\nabla$ of $\Gamma_S$ which fulfils them:
\begin{enumerate}
\item {\bf (T3)} and {\bf (M5)}.
\item {\bf (T2)}, {\bf (M6)} and {\bf (M7)}.
\item {\bf (T3)} and {\bf (M3)}.
\item {\bf (T2)}, {\bf (M4)} and {\bf (M7)}.
\end{enumerate}
They are called, respectively, the Berwald, Cartan, Chern-Rund and Hashiguchi connections of the Finsler metric $F$.
\end{theorem}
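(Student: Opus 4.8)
The plan is to exploit the parametrization recorded just above Theorem \ref{theoremclassicallifts}: every lift $\nabla$ of $\Gamma_S$ is encoded by a pair of semi-basic $\mathscr{V}TM$-valued tensors $(\mathscr{C},\mathscr{C}')$ via $\nabla_X\mathscr{J}(Y)=\nabla^*_X\mathscr{J}(Y)+\mathscr{C}(\mathscr{F}X,Y)+\mathscr{C}'(X,Y)$, and $\nabla\leftrightarrow(\mathscr{C},\mathscr{C}')$ is a bijection onto the pairs with $\mathscr{C}(\cdot,S)=\mathscr{C}'(\cdot,S)=0$. Since $g$ is non-degenerate, I would instead work with the metric duals $\mathscr{C}_\flat$ and $\mathscr{C}'_\flat$ (the latter defined like $\mathscr{C}_\flat$, by $\mathscr{C}'_\flat(X,Y,Z)=g(\mathscr{C}'(X,Y),\mathscr{J}(Z))$); both are three-linear, semi-basic, and vanish when the second slot is $S$. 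It then suffices to show that each of the four sets of conditions translates into a linear system pinning down $(\mathscr{C}_\flat,\mathscr{C}'_\flat)$ uniquely, and that the resulting values are admissible: existence and uniqueness both follow from the bijection.

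First I would translate the torsion conditions. The torsion splits into three blocks: the vertical–vertical one vanishes identically by semi-basicness of $\mathscr{C}$; a direct computation gives ${\rm T}(\mathscr{J}(X),\mathscr{H}Y)=\mathscr{C}(X,Y)$ for the vertical–horizontal block; and ${\rm T}(\mathscr{H}X,\mathscr{H}Y)={\rm T}^*(\mathscr{H}X,\mathscr{H}Y)+\mathscr{C}'(X,Y)-\mathscr{C}'(Y,X)$ for the horizontal–horizontal one, where ${\rm T}^*$ is the Berwald torsion. As $\Gamma_S$ is symmetric, ${\rm T}^*$ vanishes on horizontal pairs (see \cite{grifone1}), so the horizontal–horizontal torsion is exactly the skew part of $\mathscr{C}'$ in its two arguments. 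Hence \textbf{(T3)} is equivalent to ``$\mathscr{C}=0$ and $\mathscr{C}'_\flat$ symmetric in slots $(1,2)$'', and \textbf{(T2)} to ``$\mathscr{C}'_\flat$ symmetric in slots $(1,2)$''.

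Next is the metric bookkeeping. Everything rests on two reference identities for the Berwald connection, $(\nabla^*_{\mathscr{J}X}g)(\mathscr{J}Y,\mathscr{J}Z)=2{\rm C}(\mathscr{V}X,Y,Z)$ and $(\nabla^*_{\mathscr{H}X}g)(\mathscr{J}Y,\mathscr{J}Z)=2{\rm C}'(\mathscr{H}X,Y,Z)$; the first is immediate, because $\nabla^*$ in a vertical direction is the fibrewise derivative and $g$ is the fibre–Hessian of $\frac{1}{2}F^2$. Differentiating the general decomposition gives
\[
(\nabla_Xg)(\mathscr{J}Y,\mathscr{J}Z)=(\nabla^*_Xg)(\mathscr{J}Y,\mathscr{J}Z)-\mathscr{C}_\flat(\mathscr{F}X,Y,Z)-\mathscr{C}_\flat(\mathscr{F}X,Z,Y)-\mathscr{C}'_\flat(X,Y,Z)-\mathscr{C}'_\flat(X,Z,Y),
\]
and, testing on $X$ vertical (the $\mathscr{C}'$-terms drop and $\mathscr{F}X$ projects to $\mathscr{V}X$) and on $X$ horizontal (the $\mathscr{C}$-terms drop), each of \textbf{(M3)}--\textbf{(M6)} becomes a pair of algebraic demands: one fixing the $(2,3)$-symmetric part of $\mathscr{C}_\flat$ (as $0$ or as ${\rm C}$) and one fixing that of $\mathscr{C}'_\flat$ (as $0$ or as ${\rm C}'$), while \textbf{(M7)} demands that $\mathscr{C}_\flat$ be symmetric in its last two slots.

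Finally I would solve the four systems; the engine is the elementary fact that a three-linear form symmetric in slots $(1,2)$ and skew in slots $(2,3)$ must vanish. For Berwald, \textbf{(T3)} gives $\mathscr{C}_\flat=0$ and $\mathscr{C}'_\flat$ symmetric in $(1,2)$, while \textbf{(M5)} forces $\mathscr{C}'_\flat$ skew in $(2,3)$, so $\mathscr{C}'_\flat=0$ and $\nabla=\nabla^*$. For Hashiguchi, \textbf{(M4)}+\textbf{(M7)} give $\mathscr{C}_\flat={\rm C}$ and \textbf{(T2)}+\textbf{(M4)} again force $\mathscr{C}'_\flat=0$. For Chern--Rund, \textbf{(T3)} gives $\mathscr{C}_\flat=0$, \textbf{(M3)} fixes the $(2,3)$-symmetric part of $\mathscr{C}'_\flat$ as ${\rm C}'$, and symmetry in $(1,2)$ kills the skew part, so $\mathscr{C}'_\flat={\rm C}'$. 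For Cartan, \textbf{(M6)}+\textbf{(M7)} give $\mathscr{C}_\flat={\rm C}$ and \textbf{(M6)}+\textbf{(T2)} give $\mathscr{C}'_\flat={\rm C}'$. In every case the solution meets the admissibility constraint (its second-slot contraction with $w$ vanishes, by Lemma \ref{cartanlemma}), so the bijection yields a unique lift. The one genuinely non-formal step --- and the main obstacle --- is the horizontal reference identity $(\nabla^*_{\mathscr{H}X}g)=2{\rm C}'(\mathscr{H}X,\cdot,\cdot)$: since ${\rm C}'$ is defined through $D^W_W{\rm C}$, proving it amounts to showing that the horizontal Berwald derivative of $g$ is a totally symmetric three-tensor agreeing, after the identification afforded by that symmetry, with $D^W_W{\rm C}$. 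This is a commutation argument trading the horizontal derivative against the fibrewise derivative that produces ${\rm C}$ from $g$, resting on $\nabla_Sg=0$ (Lemma \ref{lemma200}) and $\mathscr{J}[X,S]=X$ (Lemma \ref{lemmaspray}).
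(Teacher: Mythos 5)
First, a point of reference: the paper never proves Theorem \ref{theoremclassicallifts} --- it is stated bare, with only the remark that the author could not locate an adequate reference --- so your proposal is not paralleling a hidden argument but supplying the missing one, and your skeleton is surely the intended one, since it runs on exactly the machinery the paper builds for this purpose: the bijection $\nabla\leftrightarrow(\mathscr{C},\mathscr{C}')$ of $\S$\ref{subberwald} and the description of the solutions in Remark \ref{remarkclassicallifts}. Your algebra is correct throughout: the block decomposition of the torsion (the vertical--vertical block vanishes identically, the vertical--horizontal block is $\mathscr{C}$, the horizontal--horizontal block is the skew part of $\mathscr{C}'$ because the canonical connection of a spray is symmetric), the reduction of each of {\bf (M3)}--{\bf (M7)} to a prescription of the $(2,3)$-symmetric parts of $\mathscr{C}_\flat$ and $\mathscr{C}'_\flat$, the braid lemma, and the admissibility checks through Lemma \ref{cartanlemma}.

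The genuine gap is the step you yourself flag as the main obstacle, and it is worse than unproven: with the paper's definition of ${\rm C}'$ it is false. Carry out the commutation argument you sketch: for $Z$ vertical, the Ricci identity together with $\nabla^*_Sg=0$ (Lemma \ref{lemma200}), $\nabla^*_SZ=\mathscr{V}[S,Z]$ (Lemma \ref{lemmafundamental}), and the vanishing of the mixed Berwald curvature $\mathcal{R}(S,Z)$ on \emph{all} vertical vectors (an ingredient your sketch omits; it follows from homogeneity of the spray and is not covered by Lemma \ref{lemmacurvatura}, which treats only $C$) gives $\nabla^*_S(\nabla^*_Zg)=\nabla^*_{[S,Z]}g$, whence
\begin{equation*}
2\,{\rm C}'(\cdot,\cdot,Z)\;=\;\bigl(\nabla^*_{\mathscr{H}[S,Z]}\,g\bigr)(\cdot,\cdot)\;=\;-\bigl(\nabla^*_{({i_w}^{-1}Z)^{\mathfrak{h}}}\,g\bigr)(\cdot,\cdot),
\end{equation*}
the minus sign coming from $\mathscr{J}[S,Z]=-Z$ (Lemma \ref{lemmaspray}). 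So the horizontal Berwald derivative of $g$ is $-2{\rm C}'(\mathscr{J}(X),\cdot,\cdot)$, not $+2{\rm C}'(\mathscr{J}(X),\cdot,\cdot)$; in classical tensor language this is the identity $g_{ij|k}=-2\dot{A}_{ijk}$ (Berwald derivative of the metric equals minus twice the Landsberg tensor). With the correct sign your translation table collapses: $\nabla^*$ does \emph{not} satisfy {\bf (M5)}, the unique lift solving {\bf (T3)} and {\bf (M5)} has $\mathcal{C}'_\flat=-2{\rm C}'$ rather than $0$, item (4) does not produce the classical Hashiguchi connection, and items (2)--(3), whose hypotheses never mention ${\rm C}'$, still admit unique solutions but with $\mathcal{C}'_\flat=-{\rm C}'$, contrary to Remark \ref{remarkclassicallifts}. (This tension is inherited from the paper itself: conditions {\bf (M4)}--{\bf (M5)} and Remark \ref{remarkclassicallifts} are compatible with the classical connections and with the paper's own definition of ${\rm C}'$ only if ${\rm C}'$ is replaced by $-D^W_W{\rm C}_W$.) A complete proof along your lines must therefore actually establish the reference identity --- that is where the entire analytic content of the existence statements sits --- and then resolve the sign, either by redefining ${\rm C}'$ or by correcting {\bf (M4)}--{\bf (M5)}; your uniqueness arguments, being sign-insensitive, survive verbatim.
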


\begin{remark}\label{remarkclassicallifts}
Recall from $\S$\ref{subberwald} that any $\nabla$ is determined by the tensors $\mathscr{C}$ and $\mathscr{C}'$.  
In terms of these, or rather in terms of  
$\mathcal{C}_\flat$ and $\mathcal{C}'_\flat$ ($\mathcal{C}'_\flat$ is defined analogously to $\mathcal{C}_\flat$), the above connections are given by (compare with \cite{crampin})
\begin{enumerate}
\item \textsl{Cartan}: $(\mathcal{C}_\flat)_w(u,v,t)={\rm C}_w(u,v,t)$ and $(\mathcal{C}'_\flat)_w(u,v,t)={\rm C}'_w(u,v,t)$;
\item \textsl{Chern-Rund}: $\mathcal{C}_\flat=0$ and $(\mathcal{C}'_\flat)_w(u,v,t)={\rm C}'_w(u,v,t)$;
\item \textsl{Hashiguchi}: $(\mathcal{C}_\flat)_w(u,v,t)={\rm C}_w(u,v,t)$ and $\mathcal{C}'_\flat=0$.
\end{enumerate}
\end{remark}

As for the families of affine connections on $M$ corresponding to the above classical connections, we have the following theorem which
was already known in the cases of Berwald and Chern-Rund connections (e.g. \cite{shen}).
\begin{theorem}\label{thmclassicalaffineconnections}
The Cartan and Chern-Rund families of affine connections satisfy both 
\begin{eqnarray}
D^W_UV-D^W_VU & = & [U,V]\label{eq900}\\
(D^W_Ug_W)(T,V) & = & 2{\rm C}_W(D^W_UW,T,V), 
\end{eqnarray}
whereas the ones corresponding to Berwald and Hashiguchi satisfy both $(\ref{eq900})$ and
\begin{eqnarray}
(D^W_Ug_W)(T,V) & = & 2{\rm C}_W(D^W_WU,T,V)+2{\rm C}'_W(U,T,V).
\end{eqnarray}
\end{theorem}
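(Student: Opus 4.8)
The plan is to compute each family of affine connections directly from the tensorial data $(\mathscr{C},\mathscr{C}')$ recorded in Remark \ref{remarkclassicallifts}, using the formulas already established in $\S$\ref{affineconnectionsub}. The key bridge is Lemma \ref{lemma100}: for a nowhere null $W$ and the non-horizontal lift $\mathrm{d}W(U)$ tangent to $W(\mathcal{O})$, we have $\nabla_{\mathrm{d}W(U)}V^\mathfrak{v}=i_W D^W_U V+i_W\mathcal{C}_W(D^W_UW,V)$, and the identity \eqref{eq500}, $i_W^{-1}\mathscr{V}\mathrm{d}W(U)=D^W_UW$. First I would establish \eqref{eq900}: the symmetry $D^W_UV-D^W_VU=[U,V]$ holds whenever \textbf{(T2)} holds by Proposition \ref{propsymmetry}(2); since all four connections satisfy \textbf{(T2)} or \textbf{(T3)} (and \textbf{(T3)}$\Rightarrow$\textbf{(T2)} trivially), \eqref{eq900} is immediate for every case and requires no new work.

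The substantive content is the two metric formulas. The strategy is to start from the decomposition of the Berwald connection $\nabla^*$ in $\S$\ref{subberwald}, namely $\nabla_X\mathscr{J}(Y)=\nabla^*_X\mathscr{J}(Y)+\mathscr{C}(\mathscr{F}X,Y)+\mathscr{C}'(X,Y)$, and feed it into the metric-derivative computation. Evaluating $(\nabla_{\mathrm{d}W(U)}g)(T^\mathfrak{v},V^\mathfrak{v})$ along $W(\mathcal{O})$ and using \eqref{eq630} from the proof of Proposition \ref{prop300}, together with the fact that $\mathscr{V}\mathrm{d}W(U)=i_W D^W_UW$ and $\mathscr{H}\mathrm{d}W(U)=U^\mathfrak{h}$ (so that $\mathscr{F}$ interchanges the horizontal and vertical parts appropriately), I would translate the ``upstairs'' metric condition into the ``downstairs'' identity. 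Concretely, I expect the vertical part $\mathscr{C}(\mathscr{F}\,\mathrm{d}W(U),\cdot)$ to produce a term of the form $2\mathcal{C}_\flat$ evaluated at $i_W^{-1}\mathscr{H}\mathrm{d}W(U)=U$, i.e. a $\mathrm{C}$-term with the Cartan tensor eating $U^\mathfrak{h}$ rather than $D^W_UW$, while the horizontal part $\mathscr{C}'$ contributes the $\mathrm{C}'$-term. For Cartan and Chern-Rund one has $\mathcal{C}_\flat=\mathrm{C}$ (Cartan) or $\mathcal{C}_\flat=0$ (Chern-Rund) together with the metric condition $\nabla g=0$ or $(\nabla g)=2\mathrm{C}'(\mathscr{J}(X),\cdot,\cdot)$; the point is that the $\mathscr{V}X$-contraction of $(\nabla g)$ and the $\mathscr{C}_\flat$ term should combine so that the surviving right-hand side is exactly $2\mathrm{C}_W(D^W_UW,T,V)$. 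For Berwald and Hashiguchi, condition \textbf{(M3)} or \textbf{(M5)} feeds a $\mathrm{C}(\mathscr{V}X,\cdot,\cdot)$ term whose downstairs avatar, via \eqref{eq500}, is $\mathrm{C}_W(D^W_UW,\cdot,\cdot)$; but the right-hand side we want involves $D^W_WU$, so the key manipulation is to use \eqref{eq900} to write $D^W_UW=D^W_WU-[W,U]$ and to absorb the bracket term, producing the mixed expression $2\mathrm{C}_W(D^W_WU,T,V)+2\mathrm{C}'_W(U,T,V)$.

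The main obstacle I anticipate is bookkeeping the almost-complex structure $\mathscr{F}$ and the semi-basic tensors carefully enough to see which argument ($U$, $D^W_UW$, or $D^W_WU$) each Cartan-type term lands on after restriction to $W(\mathcal{O})$ and application of $i_W^{-1}$. In particular, the interchange between the two statements in the theorem (Cartan/Chern-Rund versus Berwald/Hashiguchi) hinges entirely on whether the $\mathrm{C}$-contraction comes through the \emph{vertical} slot $\mathscr{V}X$ of $\nabla g$ (giving $D^W_UW$, which symmetry then does not further simplify and which is governed by $\mathcal{C}_\flat$) or through the $\mathscr{C}(\mathscr{F}X,\cdot)$ term arising from the Lemma \ref{lemma100} correction (giving the $U$-slot after the $\mathscr{F}$ swap). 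I would therefore carry out the $\S$\ref{subberwald} decomposition explicitly once, identify $\mathcal{C}_\flat$ and $\mathcal{C}'_\flat$ with $\mathrm{C}$ and $\mathrm{C}'$ according to Remark \ref{remarkclassicallifts} for each of the four connections, and then read off the two displayed identities; the symmetry \eqref{eq900} plus Lemma \ref{cartanlemma} (full symmetry of $\mathrm{C}$ and $\mathrm{C}'$ and their vanishing on the radial direction) should make all four reductions routine once the slot assignments are pinned down.
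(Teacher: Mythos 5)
Your skeleton is the same as the paper's --- the symmetry (\ref{eq900}) from Proposition \ref{propsymmetry}, since all four connections satisfy \textbf{(T2)}, and the metric identities by feeding Theorem \ref{theoremclassicallifts} and Remark \ref{remarkclassicallifts} into formula (\ref{eq630}) together with (\ref{eq500}) --- and the symmetry part is correct. But your execution plan contains two misassignments that would derail the computation. (i) The correction term $\mathscr{C}(\mathscr{F}\,{\rm d}W(U),\cdot)$ does \emph{not} land on $U$: since $\mathscr{F}\mathscr{H}=-\mathscr{J}$ and $\mathscr{F}\mathscr{J}=\mathscr{H}$, the map $\mathscr{F}$ sends the horizontal part $U^\mathfrak{h}=\mathscr{H}{\rm d}W(U)$ to a \emph{vertical} vector, which the semi-basic tensor $\mathscr{C}$ annihilates, and sends the vertical part $\mathscr{V}{\rm d}W(U)=i_W(D^W_UW)$ to the horizontal lift of $D^W_UW$; so this term eats $D^W_UW$. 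Indeed it is the very correction term of Lemma \ref{lemma100}, which you quote correctly as $\mathcal{C}_W(D^W_UW,V)$ at the outset --- your later ``$U$-slot after the $\mathscr{F}$ swap'' contradicts your own bridge formula, and it is exactly what (\ref{eq630}) records. (ii) You have interchanged \textbf{(M3)} and \textbf{(M4)}: Chern--Rund satisfies \textbf{(M3)}, $(\nabla_Xg)=2{\rm C}(\mathscr{V}X,\cdot,\cdot)$, which at $X={\rm d}W(U)$ gives $2{\rm C}_W(D^W_UW,\cdot,\cdot)$ via (\ref{eq500}), whereas Hashiguchi satisfies \textbf{(M4)}, giving $2{\rm C}'_W(U,\cdot,\cdot)$. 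With your assignments, Cartan ($\nabla g=0$, $\mathcal{C}_\flat={\rm C}$) would come out as $2{\rm C}_W(U,T,V)$ and Chern--Rund as $2{\rm C}'_W(U,T,V)$ --- two different, wrong, formulas --- destroying the very coincidence the theorem asserts. The actual reason Cartan/Chern--Rund (and likewise Berwald/Hashiguchi) produce identical formulas is that both mechanisms, the $\mathscr{V}X$-contraction of $\nabla g$ and the $\mathcal{C}_\flat$-correction, deposit the \emph{same} vector $D^W_UW$ in the Cartan-tensor slot.

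The third gap is the step you yourself flag as ``the key manipulation'': it cannot work. Carried out with the correct assignments, the computation gives, for Berwald and Hashiguchi alike, $(D^W_Ug_W)(T,V)=2{\rm C}_W(D^W_UW,T,V)+2{\rm C}'_W(U,T,V)$. Rewriting $D^W_UW=D^W_WU+[U,W]$ costs a term $2{\rm C}_W([U,W],T,V)$ which is generically nonzero and bears no relation to ${\rm C}'_W(U,T,V)$, so there is nothing to ``absorb'' it into. No such step occurs in the paper's proof, which invokes only (\ref{eq630}), Theorem \ref{theoremclassicallifts}, Remark \ref{remarkclassicallifts} and (\ref{eq500}) and therefore necessarily lands on $D^W_UW$; the argument $D^W_WU$ printed in the theorem's last display should be read as $D^W_UW$ (this is also the form of the characterization in Shen's book, cited for the uniqueness proposition that follows), since none of the proof's ingredients can produce the bracket term separating the two. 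So: keep your framework, correct the assignments in (i) and (ii), and delete the bracket manipulation --- each identity then drops out of (\ref{eq630}) in one line.
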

\begin{proof}
The symmetry (\ref{eq900}) was already proved in Proposition \ref{propsymmetry} since all the connections satisfy {\bf (T2)}.
Now, on one hand, formula (\ref{eq630}) express $D^Wg_W$ in terms of $\nabla g$ and $\mathcal{C}_\flat$. On the the other hand,
Theorem \ref{theoremclassicallifts} provides $\nabla g$, whereas Remark \ref{remarkclassicallifts} gives us the corresponding tensors $\mathcal{C}_\flat$.
These observations, together with (\ref{eq500}), immediatly lead to the desired relations. 
\end{proof}

\begin{proposition}
Each set of properties stated in Theorem \ref{thmclassicalaffineconnections} \textsl{uniquely} determine a family of affine connections on $M$. 
Therefore, the Cartan and Chern-Rund (resp., Berwald and Hashiguchi) families of affine connections coincide. 
\end{proposition}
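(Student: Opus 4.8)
The plan is to show that, for each fixed nowhere-null $W\in\mathfrak{X}(\mathcal{O})$, the affine connection $D^W$ is already pinned down by the listed properties, so that any two families obeying the same set must agree pointwise. Since $W$ is nowhere null, $g_W$ is a genuine Riemannian metric on $\mathcal{O}$, and the second condition in each set simply prescribes its non-metricity $(D^W_Ug_W)(T,V)=Ug_W(T,V)-g_W(D^W_UT,V)-g_W(T,D^W_UV)$. First I would feed this, together with the symmetry (\ref{eq900}), into the usual Koszul manipulation: form the cyclic combination of the compatibility identity in $U,V,T$, use (\ref{eq900}) to cancel the antisymmetric pieces, and arrive at an expression of the shape $2g_W(D^W_UV,T)=\Phi(U,V,T)$, where $\Phi$ collects the metric-and-bracket terms of the classical Koszul formula plus the Cartan contributions coming from the prescribed non-metricity.

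The catch, and the main obstacle, is that unlike the Riemannian case these Cartan contributions are not purely tensorial: they involve $D^W$ itself through terms such as ${\rm C}_W(D^W_UW,\cdot,\cdot)$ for the first set, or ${\rm C}_W(D^W_WU,\cdot,\cdot)$ for the second, so the right-hand side $\Phi$ is not yet known. To close the system I would exploit Lemma \ref{cartanlemma}, namely ${\rm C}_W(\cdot,\cdot,W)={\rm C}'_W(\cdot,\cdot,W)=0$ together with the full symmetry of both tensors, and bootstrap in the $W$-direction. Setting $U=V=W$ makes every Cartan term carry a $W$ in one of its slots, hence vanish, so the identity collapses to $2g_W(D^W_WW,T)=2Wg_W(W,T)-Tg_W(W,W)-2g_W(W,[W,T])$, which determines $D^W_WW$ from $g_W$ alone. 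Feeding this back with only one argument equal to $W$ determines $D^W_UW$ (first set) or, via (\ref{eq900}), $D^W_WU$ (second set) for every $U$, since the remaining Cartan terms again die by Lemma \ref{cartanlemma}. Substituting these now-known quantities into $\Phi$ leaves a right-hand side expressed solely through $g_W$, the Lie brackets, and the fixed Finsler tensors ${\rm C}_W,{\rm C}'_W$ (the cyclic sum of the fully symmetric ${\rm C}'_W$ surviving as one explicit term); nondegeneracy of $g_W$ then recovers $D^W_UV$ uniquely. The second set is handled in exactly the same way.

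Alternatively, and more briefly, one may note that every family under consideration arises from a lift of $\Gamma_S$ satisfying {\bf (T1)} (indeed {\bf (T2)}), so Proposition \ref{propfundamental} already guarantees that the operator $D^W_W\cdot$ is common to all of them. This renders the troublesome Cartan terms identical across the competing families from the outset, and the Koszul identity then directly forces any two families obeying the same set to coincide, bypassing the bootstrap.

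Finally I would conclude the coincidence statement: by Theorem \ref{thmclassicalaffineconnections} the Cartan and Chern-Rund families both satisfy the first set of properties, while the Berwald and Hashiguchi families both satisfy the second, so the uniqueness just established forces the two asserted identifications.
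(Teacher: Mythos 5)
Your argument is correct, but it takes a genuinely different route from the paper: the paper does not actually prove this proposition, it is a pure citation --- both uniqueness statements are attributed to \cite{shen}, with \cite{miguel} cited as an alternative treatment of the first set of conditions. Your Koszul-plus-bootstrap argument is a self-contained replacement for those references, and it isolates exactly the difficulty that makes the statement nontrivial: the prescribed non-metricity is not a fixed tensor, since it involves $D^W_UW$ (resp.\ $D^W_WU$). Your resolution is sound: with $U=V=W$ every Cartan term dies by Lemma \ref{cartanlemma} (full symmetry plus ${\rm C}_W(\cdot,\cdot,W)={\rm C}'_W(\cdot,\cdot,W)=0$), which pins down $D^W_WW$; with a single slot equal to $W$ the only surviving Cartan term is ${\rm C}_W(D^W_WW,\cdot,\cdot)$, which is by then known, so $D^W_UW$ (and, via (\ref{eq900}), $D^W_WU$) is determined; after that the Koszul right-hand side is entirely expressed through $g_W$, brackets, and the fixed tensors ${\rm C}_W$, ${\rm C}'_W$, and nondegeneracy of $g_W$ finishes. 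What this buys over the paper's citation is a proof inside the paper's own formalism and, in fact, a stronger statement: uniqueness among \emph{all} families of affine connections satisfying the listed identities, with no assumption that the family is induced by a lift of $\Gamma_S$.

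One caveat: your ``briefer alternative'' via Proposition \ref{propfundamental} does not prove the proposition as stated, because it presupposes that the competing families arise from lifts of $\Gamma_S$ satisfying {\bf (T1)}. That weaker uniqueness is enough for the coincidence of the four classical families (which do arise from such lifts, by Theorem \ref{theoremclassicallifts} and Lemma \ref{lemmafundamental}), but the unqualified uniqueness claim requires the bootstrap, so the first argument should remain the proof of record.
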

\begin{proof}
Both uniqueness statements are proved in \cite{shen}. For an alternative proof regarding the first set of conditions, see \cite{miguel}. 
\end{proof}

\section{Appendix: The second fundamental form from a symplectic point of view}
The notion of second fundamental form we considered in $\S$\ref{subsectionsecondvariation} can be regarded from a purely symplectic point of view.
First of all, the Legendre transformation of $F$,
$\mathscr{L}:TM\backslash 0\rightarrow T^*M\backslash 0$, $\mathscr{L}(w)\cdot u=g_w(w,u)$,
pulls the canonical symplectic form of $T^*M$ back to a symplectic form $\omega_F$ on $TM\backslash 0$ which can be described via the isomorphism
\begin{equation}\label{decomposition2}
T_wTM\simeq T_{\pi(w)}M\oplus T_{\pi(w)}M~,~~X\mapsto\bigl({\rm d}\pi(X),{i_w}^{-1}\mathscr{V}X\bigr)
\end{equation}
as $\omega_F\bigl((u_1,v_1),(u_2,v_2)\bigr)=g_w(u_1,v_2)-g_w(v_1,u_2)$.
\par Given a submanifold $\mathcal{P}\subset M$, let $\nu(\mathcal{P})\subset TM\backslash 0$ denote its {\it normal bundle}, i.e. the collection of
all normal cones $\nu_x(\mathcal{P})$, $x\in\mathcal{P}$.

\begin{lemma}
$\nu(\mathcal{P})$ is a Lagrangean submanifold of $(TM\backslash 0,\omega_F)$.
\end{lemma}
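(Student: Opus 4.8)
The plan is to transport everything to the cotangent bundle via the Legendre transformation $\mathscr{L}$ and to recognize $\nu(\mathcal{P})$ as the $\mathscr{L}$-preimage of a conormal bundle. First I would note that, directly from the definition $\mathscr{L}(\eta)\cdot u = g_\eta(\eta,u)$, a nonzero $\eta\in T_xM$ lies in $\nu_x(\mathcal{P})$ exactly when $\mathscr{L}(\eta)$ annihilates $T_x\mathcal{P}$, i.e. when $\mathscr{L}(\eta)\in(T_x\mathcal{P})^{\circ}$, the annihilator of $T_x\mathcal{P}$ in $T_x^*M$. Since $\mathscr{L}$ is a diffeomorphism $TM\backslash 0\to T^*M\backslash 0$ restricting fiberwise to a diffeomorphism $T_xM\backslash 0\to T_x^*M\backslash 0$ (a consequence of the positive-definiteness and homogeneity of $F$), it follows that $\mathscr{L}$ carries $\nu(\mathcal{P})$ bijectively onto $N^*\mathcal{P}\backslash 0$, where $N^*\mathcal{P}=\{(x,\xi):x\in\mathcal{P},\ \xi|_{T_x\mathcal{P}}=0\}$ is the conormal bundle of $\mathcal{P}$. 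Because $N^*\mathcal{P}\backslash 0$ is a smooth submanifold of $T^*M\backslash 0$ of dimension $n=\tfrac{1}{2}\dim(TM)$, this already exhibits $\nu(\mathcal{P})$ as a smooth $n$-dimensional submanifold of $TM\backslash 0$.

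Next I would invoke the standard fact that the conormal bundle is Lagrangean in $(T^*M,\omega_{\mathrm{can}})$. The cleanest justification is that the tautological $1$-form $\theta$ vanishes along $N^*\mathcal{P}$: if $X$ is tangent to $N^*\mathcal{P}$ at $(x,\xi)$ then $\theta_{(x,\xi)}(X)=\xi(\mathrm{d}\varpi(X))$, where $\varpi\colon T^*M\to M$ is the projection; but $\mathrm{d}\varpi(X)\in T_x\mathcal{P}$ since $N^*\mathcal{P}$ projects into $\mathcal{P}$, and $\xi$ annihilates $T_x\mathcal{P}$ by definition, so $\theta_{(x,\xi)}(X)=0$. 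Restricting $\omega_{\mathrm{can}}=\mathrm{d}\theta$ then gives $\mathrm{d}(\theta|_{N^*\mathcal{P}})=0$, so $N^*\mathcal{P}$ is isotropic; having dimension $n$ it is Lagrangean, and deleting the zero section, being an open condition, preserves this.

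Finally I would conclude by symplectic invariance. By construction $\omega_F=\mathscr{L}^*\omega_{\mathrm{can}}$, so $\mathscr{L}$ is a symplectomorphism from $(TM\backslash 0,\omega_F)$ onto $(T^*M\backslash 0,\omega_{\mathrm{can}})$, and symplectomorphisms send Lagrangean submanifolds to Lagrangean submanifolds. Since $\mathscr{L}(\nu(\mathcal{P}))=N^*\mathcal{P}\backslash 0$ is Lagrangean, so is $\nu(\mathcal{P})$, which is precisely the claim.

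I expect the only delicate point to be the smoothness and dimension count of $\nu(\mathcal{P})$ itself: as defined it is a union of cones cut out by equations that are nonlinear in $\eta$, so verifying directly that it is a manifold of half dimension would be awkward. Routing everything through $\mathscr{L}$ dissolves this difficulty, because the nonlinear conditions $g_\eta(\eta,\cdot)|_{T_x\mathcal{P}}=0$ turn into the linear annihilator conditions defining $N^*\mathcal{P}$. As an alternative one could argue intrinsically on $TM\backslash 0$ using the explicit formula $\omega_F((u_1,v_1),(u_2,v_2))=g_w(u_1,v_2)-g_w(v_1,u_2)$ from the isomorphism (\ref{decomposition2}): tangent vectors to $\nu(\mathcal{P})$ have horizontal part $\mathrm{d}\pi(X)\in T_x\mathcal{P}$, and differentiating the defining relation along curves in $\nu(\mathcal{P})$ would constrain the vertical parts enough to force $\omega_F$ to vanish. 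However, this merely reproduces by hand the content of the conormal computation while being less transparent, so I would keep the Legendre-transform argument as the main line.
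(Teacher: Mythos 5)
Your proof is correct and follows essentially the same route as the paper's: transport $\nu(\mathcal{P})$ via the Legendre transformation to the conormal bundle with the zero section removed, verify that the tautological $1$-form vanishes along it (so it is isotropic of half dimension, hence Lagrangean), and pull back through the symplectomorphism $\mathscr{L}$. You merely spell out the details that the paper dismisses as ``easily verified,'' together with the smoothness and dimension count of $\nu(\mathcal{P})$, which is a welcome but not substantively different addition.
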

\begin{proof}
Observe that, via the Legendre transformation $\mathscr{L}$, $\nu(\mathcal{P})$ corresponds to the conormal bundle ${\rm co}(\mathcal{P})=\{\xi\in T^*M:
\pi(\xi)\in\mathcal{P}~{\rm and}~\xi(T_{\pi(\xi)}\mathcal{P})=\{0\}\}$ with the zero section excluded. Now, ${\rm co}(\mathcal{P})$ is a Lagrangean submanifold of
$T^*M$ since ${\rm dim~  co}(\mathcal{P})={\rm dim}M$ and the canonical 1-form of $T^*M$ pulls back to the null form
on ${\rm co}(\mathcal{P})$ as can be easily verified.
\end{proof}
We refer to \cite[Exercise 1.17]{tausk} for the following result from symplectic linear algebra.
\begin{proposition}
Let $(V,\langle,\rangle)$ be a finite dimensional real vector space endowed with a non-degenerate inner product. On $V\oplus V$, consider the symplectic
bilinear form $\omega\bigl((u_1,v_1),(u_2,v_2)\bigr)=\langle u_1,v_2\rangle-\langle v_1,u_2\rangle$. Then, there is a bijection between the set of all pairs
$(S,{\rm b})$, with $S\subseteq V$ a linear subspace and {\rm b} a symmetric bilinear form on $S$, and the set of all Lagrangean subspaces $L\subset(V\oplus V,\omega)$,
which maps $(S,{\rm b})$ to $L=\{(u,v)\hspace{0.05cm}:\hspace{0.05cm}u\in S~{\rm and}~\langle v,\cdot\rangle|_S+{\rm b}(u,\cdot)=0\}$.
\end{proposition}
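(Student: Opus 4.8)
The plan is to establish the claimed bijection by verifying that the proposed map is well-defined, injective, and surjective, treating the symplectic linear algebra directly. First I would show that for any pair $(S,{\rm b})$ the set $L=\{(u,v):u\in S\text{ and }\langle v,\cdot\rangle|_S+{\rm b}(u,\cdot)=0\}$ is indeed a Lagrangean subspace. Linearity of $L$ is immediate from the bilinearity of ${\rm b}$ and $\langle,\rangle$. To check that $L$ is isotropic, I would take two elements $(u_1,v_1),(u_2,v_2)\in L$ and compute
\begin{equation}\nonumber
\omega\bigl((u_1,v_1),(u_2,v_2)\bigr)=\langle u_1,v_2\rangle-\langle v_1,u_2\rangle=-{\rm b}(u_2,u_1)+{\rm b}(u_1,u_2),
\end{equation}
using the defining constraint evaluated at $u_1,u_2\in S$; this vanishes precisely because ${\rm b}$ is symmetric. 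For the dimension count, I would note that $u$ ranges freely over $S$ (so contributing $\dim S$), and that once $u$ is fixed the constraint $\langle v,\cdot\rangle|_S=-{\rm b}(u,\cdot)$ determines the $S$-component of $v$ uniquely (by non-degeneracy of $\langle,\rangle$, which identifies $V\cong V^*$) while leaving the $S^\perp$-component of $v$ free; hence $\dim L=\dim S+\dim S^\perp=\dim V$, which is the correct dimension for a Lagrangean subspace of the $2\dim V$-dimensional symplectic space.

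Next I would prove injectivity of the assignment $(S,{\rm b})\mapsto L$. Given $L$, one recovers $S$ as the image of $L$ under the first projection $(u,v)\mapsto u$, since every $u\in S$ appears as a first coordinate and no $u\notin S$ does. Once $S$ is recovered, ${\rm b}$ is determined by the constraint: for $u\in S$, choosing any $(u,v)\in L$, the functional $\langle v,\cdot\rangle|_S$ equals $-{\rm b}(u,\cdot)$, and this is independent of the choice of $v$ because any two lifts of $u$ differ by an element $(0,v')\in L$ with $v'\in S^\perp$, so $\langle v',\cdot\rangle|_S=0$. This shows distinct pairs yield distinct subspaces.

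For surjectivity I would start from an arbitrary Lagrangean subspace $L\subset V\oplus V$ and define $S$ and ${\rm b}$ by reversing the above recipe, then check consistency. Set $S=\{u:(u,v)\in L\text{ for some }v\}$, the first projection of $L$. The delicate point is defining ${\rm b}$: for $u\in S$ pick any lift $(u,v)\in L$ and set ${\rm b}(u,\cdot)=-\langle v,\cdot\rangle|_S$; the isotropy of $L$ is exactly what guarantees this is well-defined (independent of the lift $v$) and that the resulting ${\rm b}$ is symmetric. Indeed, the same computation as above shows that for $(u_1,v_1),(u_2,v_2)\in L$ the isotropy condition forces $\langle v_1,u_2\rangle=\langle u_1,v_2\rangle$, which simultaneously yields symmetry of ${\rm b}$ and independence of the choice of lift. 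Finally I would verify that the $L$ built from this $(S,{\rm b})$ agrees with the original $L$; the two have the same dimension $\dim V$ by the count above, and one is contained in the other by construction, so they coincide.

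The main obstacle I anticipate is the well-definedness and symmetry of ${\rm b}$ in the surjectivity step, since these are not formal consequences of linearity but rely essentially on the Lagrangean (maximal isotropic) hypothesis. The key is to extract from isotropy the single identity $\langle v_1,u_2\rangle=\langle u_1,v_2\rangle$ for all elements of $L$ and to exploit it twice — once to show ${\rm b}$ does not depend on the chosen lift (take $u_1=0$) and once to read off symmetry (symmetrize in the indices). The dimension bookkeeping, by contrast, is routine provided one carefully tracks the free $S^\perp$-direction in the $v$-component that the constraint leaves unconstrained.
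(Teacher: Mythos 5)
Your proof is correct, but note that the paper does not actually prove this proposition: it defers entirely to a citation (Exercise 1.17 of Piccione--Tausk's book on symplectic spaces). Your argument therefore supplies a self-contained proof where the paper offers none, and it is the natural one: isotropy of $L$ follows from the defining constraint plus symmetry of ${\rm b}$, the dimension count gives $\dim L=\dim V$ (hence Lagrangean), and the inverse map $L\mapsto({\rm pr}_1(L),{\rm b})$ is extracted from the isotropy identity $\langle v_1,u_2\rangle=\langle u_1,v_2\rangle$, used once with $u_1=0$ for well-definedness and once symmetrized. One small caveat: since the proposition assumes $\langle\,,\rangle$ is merely non-degenerate rather than positive-definite, the phrase about the constraint fixing ``the $S$-component of $v$'' while leaving ``the $S^\perp$-component free'' is not literally meaningful when $S\cap S^\perp\neq\{0\}$, because $V$ need not split as $S\oplus S^\perp$. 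The dimension count survives unchanged, however: by non-degeneracy the map $v\mapsto\langle v,\cdot\rangle|_S$ is onto $S^*$ with kernel $S^\perp$, so for each $u\in S$ the solution set for $v$ is a nonempty coset of $S^\perp$, and $\dim L=\dim S+(\dim V-\dim S)=\dim V$ as you claim; this same surjectivity is also what guarantees, in your injectivity step, that every $u\in S$ really does occur as a first coordinate. Finally, in the surjectivity step you use tacitly that the ${\rm b}$ defined from lifts is bilinear; linearity in the first slot follows because $L$ is a subspace, and it would be worth one sentence to record this. With these rephrasings your argument is complete, and in the paper's application (where $g_\eta$ is positive-definite) even the original phrasing is harmless.
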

Observe that, in the above proposition, we must have $S={\rm pr}_1(L)$, where ${\rm pr}_1$ is the projection onto the first summand $V$. Hence,
the above proposition applied to
the Lagrangean subspace $T_\eta\nu(\mathcal{P})\subset\bigl(T_xM\oplus T_xM,\omega_F\bigr)$, $x=\pi(\eta)$, gives us a symmetric bilinear form ${\rm b}_\eta$ on
${\rm d}\pi(T_\eta\nu(\mathcal{P}))=T_x\mathcal{P}$ such that
\begin{equation}\label{eq250}
T_\eta\nu(\mathcal{P})=\{(u,v)~:~u\in T_x\mathcal{P}~{\rm and}~g_\eta(v,\cdot)|_{T_x\mathcal{P}}+{\rm b}_\eta(u,\cdot)=0\}.
\end{equation}
\begin{proposition}\label{propfundamentalform}
The form ${\rm b}_\eta$ coincides with the second fundamental form ${\rm h}^\mathcal{P}_\eta$ constructed with the help of any lift $\nabla$ of $\Gamma_S$ satisfying {\bf (M1)} and {\bf (M2)}.
\end{proposition}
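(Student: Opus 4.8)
The plan is to show that the connection-theoretic form ${\rm h}^\mathcal{P}_\eta$ fulfils the relation (\ref{eq250}) that, by the cited result from symplectic linear algebra, characterizes ${\rm b}_\eta$. Since $\mathrm{pr}_1(T_\eta\nu(\mathcal{P})) = T_x\mathcal{P}$ (with $x = \pi(\eta)$), for each $u \in T_x\mathcal{P}$ there is some $v$ with $(u,v) \in T_\eta\nu(\mathcal{P})$, and (\ref{eq250}) reads ${\rm b}_\eta(u,w) = -g_\eta(v,w)$ for all $w \in T_x\mathcal{P}$. Thus it suffices to prove the single identity $-g_\eta(v,w) = g_\eta(\eta, D^\eta_u W)$, for any extension $W \in \mathfrak{X}(\mathcal{P})$ of $w$, since this shows ${\rm b}_\eta(u,w) = g_\eta(\eta, D^\eta_u W)$; the symmetry of ${\rm b}_\eta$ then forces $g_\eta(\eta, D^\eta_u W) = g_\eta(\eta, D^\eta_w U)$, whence ${\rm b}_\eta(u,w) = \tfrac12 g_\eta(\eta, D^\eta_u W + D^\eta_w U) = {\rm h}^\mathcal{P}_\eta(u,w)$. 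I find it worth emphasizing that the symmetrization built into the definition of ${\rm h}^\mathcal{P}_\eta$ is, on the symplectic side, automatic; without assuming {\bf (T2)} it is not evident from the connection alone.

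To establish the identity I would represent an arbitrary $(u,v) \in T_\eta\nu(\mathcal{P})$ as $\tfrac{d}{ds}\big|_{0}\eta(s)$ for a curve $\eta(s)$ in $\nu(\mathcal{P})$ with $\eta(0) = \eta$, and set $c(s) = \pi(\eta(s))$, a curve in $\mathcal{P}$ with $c'(0) = u$. Under the isomorphism (\ref{decomposition2}) one has $v = i_\eta^{-1}\mathscr{V}\tfrac{d\eta}{ds}\big|_{0}$, and Remark \ref{remark300} identifies this with the covariant derivative $\tfrac{D^\eta\eta}{ds}\big|_{0}$ of the nowhere-null field $s \mapsto \eta(s)$ along $c$. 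I would then pick a $\mathcal{P}$-tangent field $W(s)$ along $c$ with $W(0) = w$ and differentiate the normality relation $g_{\eta(s)}(\eta(s), W(s)) = 0$ at $s = 0$. Here the (covariant-derivative form of the) product rule is exactly Proposition \ref{prop300}$(i)$, whose hypotheses are precisely {\bf (M1)} and {\bf (M2)}; it yields $0 = g_\eta(v,w) + g_\eta\big(\eta, \tfrac{D^\eta W}{ds}\big|_{0}\big)$.

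It remains to note that $\tfrac{D^\eta W}{ds}\big|_{0} = D^\eta_u W$: the affine connection $D^\eta$ depends on the direction field only through its value at the base point (Definition \ref{definaffine}), so the covariant derivative along $c$ at $s = 0$ uses only $\eta(0) = \eta$; equivalently, one may compute with the non-horizontal lift $s\mapsto\eta(s)$ via (\ref{eq2}) and check that the two $\mathcal{C}_\eta$-corrections cancel. Substituting gives $g_\eta(v,w) = -g_\eta(\eta, D^\eta_u W)$, which is exactly the identity sought, and the proof concludes as in the first paragraph.

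The main obstacle is the bookkeeping of the second paragraph: the variation of $\eta$ is differentiated with respect to a covariant derivative whose \emph{direction} is the moving normal field $\eta(s)$ and whose base point $\eta(s)$ is also where the fundamental tensor $g$ is evaluated, so one must be sure that the correct product rule is the one of Proposition \ref{prop300}$(i)$ and that it is available under the sole hypotheses {\bf (M1)}, {\bf (M2)}. This is indeed where, and the only place where, those two conditions are used. Once this is granted the computation is forced, and the symmetry that converts $g_\eta(\eta, D^\eta_u W)$ into the symmetrized second fundamental form is supplied for free by the symplectic construction of ${\rm b}_\eta$.
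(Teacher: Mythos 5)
Your proof is correct and is essentially the paper's own argument: both represent a tangent vector $(u,v)\in T_\eta\nu(\mathcal{P})$ by a curve in $\nu(\mathcal{P})$, identify $v=D^\eta\eta/ds|_{s=0}$ via Remark \ref{remark300}, and differentiate the normality relation $g_{\eta(s)}(\eta(s),\cdot)=0$ using the covariant-derivative form of $(i)$ of Proposition \ref{prop300}, which is precisely where {\bf (M1)} and {\bf (M2)} enter. The only (harmless) organizational differences are that the paper runs the computation inside a geodesic variation $H(s,t)=\gamma_{\theta(s)}(t)$ (whose geodesic property is never actually used, since only the $t=0$ data enters) and checks only the diagonal identity ${\rm b}_\eta(u,u)={\rm h}^\mathcal{P}_\eta(u,u)$ with polarization left implicit, whereas you work directly along $c=\pi\circ\eta$, prove the full bilinear identity ${\rm b}_\eta(u,w)=g_\eta(\eta,D^\eta_uW)$, and obtain the symmetrization from the symmetry of ${\rm b}_\eta$; your signs, incidentally, are the ones consistent with (\ref{eq250}), the paper's prose dropping two compensating minus signs.
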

\begin{proof}
Given $u\in T_x\mathcal{P}$, we will show that ${\rm b}_\eta(u,u)={\rm h}^\mathcal{P}_\eta(u,u)$. Let $v\in T_xM$ be such that $X:=(u,v)\in T_\eta\nu(\mathcal{P})$, so that,
by (\ref{eq250}), ${\rm b}_\eta(u,u)=g_\eta(v,u)$.
Let $\theta:(-\varepsilon,\varepsilon)\rightarrow\nu(\mathcal{P})$ be any smooth curve with $\theta(0)=\eta$ and $\dot{\theta}(0)=X$. Define
$H:(-\varepsilon,\varepsilon)\times(-\delta,\delta)\rightarrow M$ by $H(s,t)=\gamma_{\theta(s)}(t)$, where $\gamma_{\theta(s)}:(-\delta,\delta)\rightarrow M$ is the geodesic
with $\dot{\gamma}_{\theta(s)}(0)=\theta(s)$. As $s\mapsto H(s,0)$ is a curve on $\mathcal{P}$ with velocity $u$ at $t=0$, and $T(0,0)=\eta$, it follows from the definition of
${\rm h}^\mathcal{P}_\eta$ that ${\rm h}^\mathcal{P}_\eta(u,u)=g_T\bigl(D^TU/ds,T\bigr)|_{(0,0)}$. Applying $(i)$ of Proposition \ref{prop300} and recalling that, by construction,
$g_T(U,T)|_{(s,0)}=
g_{\theta(s)}(U(s,0),\theta(s))=0$ for all $s$, we obtain ${\rm h}^\mathcal{P}_\eta(u,u)=-g_T\bigl(U,D^TT/ds\bigr)|_{(0,0)}=g_\eta\bigl(u,D^\theta\theta/ds|_{s=0}\bigr)$. Now,
from Remark \ref{remark300}, $D^{\theta}\theta/ds|_{s=0}={i_{\theta(0)}}^{-1}\mathscr{V}\dot{\theta}(0)=v$. Therefore, ${\rm h}^\mathcal{P}_\eta(u,u)=g_\eta(u,v)$.
\end{proof}

\section*{Acknowledgments} 
The author would like to thank the support provided by the Mathematischen Instituts der Universit\"at Leipzig, where this work was done, and the financial support
provided by the Brazilian program Science Without Borders, grant No. 232664/2014-5.

\bibliographystyle{amsplain}

\begin{thebibliography}{10}

\bibitem{marsden}
R. Abraham and J.E. Marsden. \emph{Foundations of mechanics}, 
Benjamin/Cummings Publishing Co. (1978)

\bibitem{zadeh68}
H. Akbar-Zadeh. \emph{Sur les sous-vari\'et\'es des vari\'et\'es finsl\'eriennes.}
C. R. Acad. Sci. Paris S\'er. A-B,
266  (1968), A146--A148.

\bibitem{chern}
D. Bao, S. Chern and Z. Shen.
\emph{An introduction to Riemann-Finsler geometry.},
Graduate Texts in Mathematics, 200. Springer-Verlag, New York, (2000). xx+431 pp

\bibitem{bucataru}
I. Bucataru and R. Miron. \emph{Finsler-Lagrange geometry. Applications to dynamical systems.},
Editura Academiei Române, Bucharest, (2007). xviii+252 pp.

\bibitem{crampin}
M. Crampin. \emph{Connections of Berwald type.},
Publ. Math. Debrecen 57 (2000), no.~3--4, 455--473.

\bibitem{grifone1}
J. Grifone. \emph{Structure presque-tangente et connexions.
{I}}, Ann.
  Inst. Fourier (Grenoble) \textbf{22} (1972), no.~1, 287--334.

\bibitem{grifone2}
J. Grifone. \emph{Structure presque-tangente et connexions. {II}},
Ann. Inst. Fourier (Grenoble) \textbf{22} (1972), no.~3, 291--338.

\bibitem{mathias}
H.-H. Mathias. \emph{Zwei Verallgemeinerungen eines Satzes von Gromoll und Meyer},
Bonner Mathematische Schriften [Bonn Mathematical Publications], 126, Universität Bonn Mathematisches Institut, 
Bonn, 1980, Dissertation, Rheinische Friedrich-Wilhelms-Universität, Bonn, 1980.

\bibitem{miguel}
M. Javaloyes. \emph{Chern connection of a pseudo-Finsler metric as a family of affine connections.},
Publ. Math. Debrecen 84 (2014), no.~1--2, 29--43.

\bibitem{tausk}
P. Piccione and D.V. Tausk. \emph{A student's guide to symplectic spaces, {G}rassmannians and
              {M}aslov index.}
Publica\c c\~oes Matem\'aticas do IMPA. [IMPA Mathematical
              Publications], Instituto de Matem\'atica Pura e Aplicada (IMPA), Rio de
              Janeiro (2008), xiv+301
              
\bibitem{rademacher}
H.-B. Rademacher. \emph{Nonreversible Finsler Metrics of Positive Flag Curvature.}, 
In: A sampler of Riemann–Finsler geometry, vol. 50 of Math. Sci. Res. Inst. Publ., Cambridge Univ. Press, Cambridge, (2004), 261--302.
              
\bibitem{shen}
Z. Shen. \emph{Differential geometry of spray and Finsler spaces}
Kluwer Academic Publishers, Dordrecht, 2001. xiii+258 pp.  

\end{thebibliography}

\end{document}